\newtheorem{theorem}{Theorem}[section]
\newtheorem{lemma}[theorem]{Lemma}
\newtheorem{corollary}[theorem]{Corollary}
\newtheorem{proposition}[theorem]{Proposition}
\theoremstyle{definition}
\newtheorem{definition}[theorem]{Definition}
\newtheorem{example}[theorem]{Example}
\theoremstyle{remark}
\newtheorem{remark}[theorem]{Remark}
\renewcommand{\a}{\alpha}
\renewcommand{\b}{\beta}
\newcommand{\g}{\lambda}
\newcommand{\q}{\quad}
\newcommand{\M}{{\cal M}}
\newcommand{\ty}{\infty}
\newcommand{\e}{\varepsilon}
\newcommand{\ov}[1]{\overline{#1}}
\renewcommand{\O}{\Omega}
\newcommand{\eR}{\mathbb{R}}
\newcommand{\eN}{\mathbb{N}}
\newcommand{\Ze}{\mathbb{Z}}
\newcommand{\Qu}{\mathbb{Q}}
\newcommand{\Ce}{\mathbb{C}}
\newcommand{\re}{\mathop{\mathrm{Re}}}
\newcommand{\im}{\mathop{\mathrm{Im}}}
\newcommand{\po}{{\mathop{\mathcal P}}}
\newcommand{\res}{\operatorname{res}}
\newcommand{\I}{\mathbbm{i}}
\newcommand{\E}{\mathrm{e}}
\newcommand{\ovb}[1]{\mkern 1.5mu\overline{\mkern-1.5mu#1\mkern-1.5mu}\mkern 1.5mu}
\newcommand{\unb}[1]{\mkern 1.5mu\underline{\mkern-1.5mu#1\mkern-1.5mu}\mkern 1.5mu}
\newcommand{\di}{\,\mathrm{d}}
\newcommand{\cal}{\mathcal}
\begin{document}


\title[Quasiperiodic sets at infinity]{Quasiperiodic sets at infinity and meromorphic extensions of their fractal zeta functions}


\author[Goran Radunovi\'c]{Goran Radunovi\'c}
\thanks{The research of G.\ Radunovi\' c was supported by the Croatian Science Foundation grants PZS-2019-02-3055, UIP-2017-05-1020 and the bilateral Hubert-Curien Cogito grant 2021-22.}
\address{University of Zagreb, Faculty of Science, Horvatovac 102a, 10000 Zagreb, Croatia}
\email{{\tt goran.radunovic@math.hr} (Goran Radunovi\'c)}

%
%
%

\begin{abstract}
In this paper we introduce an interesting family of relative fractal drums (RFDs in short) at infinity and study their complex dimensions which are defined as the poles of their associated Lapidus (distance) fractal zeta functions introduced in a previous work by the author.

We define the tube zeta function at infinity and obtain a functional equation connecting it to the distance zeta function at infinity much as in the classical setting.
Furthermore, under suitable assumptions, we provide general results about existence of meromorphic extensions of fractal zeta functions at infinity in the Minkowski measurable and nonmeasurable case.
We also provide a sufficiency condition for Minkowski measurability as well as an upper bound for the upper Minkowski content, both in terms of the complex dimensions of the associated RFD.

We show that complex dimensions of quasiperiodic sets at infinity posses a quasiperiodic structure which can be either algebraic or transcedental.
Furthermore, we provide an example of a maximally hyperfractal set at infinity with prescribed Minkowski dimension, i.e., a set such that the abscissa of convergence of the corresponding fractal zeta function is in fact its natural boundary.
\end{abstract}

\keywords{distance zeta function, relative fractal drum, complex dimensions, Minkowski content, Minkowski dimension,}

\subjclass{11M41, 28A12, 28A75, 28A80, 28B15, 42B20, 44A05, 30D30}

\maketitle

\tableofcontents


\section{Introduction}

In this paper we study relative fractal drums $(A,\O)$ where $A:=\{\infty\}$ is the point at infinity via their Minkowski dimension, as well as their complex dimensions which are a far-reaching generalization of the Minkowski dimension.
A (classical) relative fractal drum $(A,\O)$ is an ordered pair of subsets of the $N$-dimensional Euclidean space $\mathbb{R}^N$, where $A$ is nonempty and $\Omega$ is Lebesgue measurable of finite $N$-dimensional volume (satisfying another mild technical condition).
These objects are a convenient generalization of the notion of a compact subset of $\mathbb{R}^N$ and were studied extensively in \cite{LapRaZu4,LapRaZu6,LapRaZu7} and the research monograph \cite{fzf}, along with their associated Minkowski dimension, Minkowski content and their fractal zeta functions and associated complex dimensions.
The complex dimension themselves are defined as poles (or more general singularities; see \cite{essen}) of the associated Lapidus (also called {\em distance}) zeta function of $(A,\O)$ given as the following Lebesgue integral
\begin{equation}\label{zeta_dist}
\zeta_{A,\O}(s):=\int_{\O}d(x,A)^{s-N}\di x,
\end{equation}
initially, for all $s\in\Ce$ such that $\re s$ is sufficiently large, where $d(x,A)$ is the Euclidean distance from $x$ to $A$.
The basic properties of the above integral is that it is absolutely convergent in the open half-plane $\{\re s>\ov{\dim}_B(A,\O)\}$, where $\ov{\dim}_B(A,\O)$ denotes the upper Minkowski dimension of the RFD $(A,\Omega)$ and hence, defines a holomorphic function in that half-plane.

Fore the general higher-dimensional theory of complex dimensions and fractal zeta functions we refer the reader to \cite{fzf,LapRaZu2,LapRaZu3} along with the survey articles \cite{brezish,tabarz}, as well as the relevant references therein.
This higher-dimensional theory of complex dimensions is a far-reaching generalization of the the well known theory of geometric zeta functions for fractal strings and their complex dimensions due to Michel L.\ Lapidus and his numerous collaborators (see \cite{lapidusfrank12} and the relevant references therein).

As already pointed out, we study here a degenerated type of relative fractal drums $(\infty,\O)$ where the set $A$ becomes the point at infinity.
The study of such RFDs was started in \cite{ra,ra2} where the basic notions of the associated Minkowski content and dimension at infinity was introduced, along with the corresponding notion of fractal zeta functions and complex dimensions at infinity.
Basic results about these objects were given along with a number of interesting examples.

Here we will provide further interesting results about the fractal zeta function at infinity, concretely, results about existence of its meromorphic extension beyond the initial half-plane of analyticity as well as about the connection to the notion of Minkowski content and measurability of $(\infty,\Omega)$.

The motivation is to construct an interesting family of transcendentally and algebraically quasiperiodic (in the sense of \cite{fzf}) sets at infinity, which will then be used to construct an example of a maximally hyper-fractal set at infinity in the sense that its zeta function has a natural barrier and thus, cannot be extended beyond the initial half-plane of convergence.

Intuitively, it is clear that the ``fractality'' of $(\infty,\O)$ stems solely from the set $\Omega$ as opposed to in the classical setting where the ``fractality'' of the RFD $(A,\Omega)$ stems from the set $A$ and $\Omega$ is usually chosen to be metrically associated to the set $A$ in the sense of \cite{Wi19,Sta}.
Of course, the set $A$ is usually the set one wishes to investigate, while the set $\Omega$ is usually used for localization purposes, i.e., when one wants, for instance, to analyze the part of the set $A$ ``seen'' only from the set $\Omega$.
For example, one might understand $A$ as a boundary of a fractal membrane $\Omega$, and wishes to investigate the vibrations of $\Omega$ from the point of view of spectral theory.

Nevertheless, here, as a counterpoint to the classical setting, the aforementioned examples of quasiperiodic sets at infinity show that there exist interesting families of nontrivial RFDs $(\infty,\Omega)$ from the fractal point of view even though the set $A$ is just one point at infinity.
We also note that from the point of view of the complex dimensions (i.e., fractal zeta functions), it is equivalent, in fact, to study the classical RFD $(\mathbf{O},\Phi(\Omega))$ where $\Phi\colon\mathbb{R}^N\to\mathbb{R}^N$  is the standard geometric inversion, i.e., $\Phi(x)=x/|x|^2$, and $\mathbf{O}:=\{0\}$ is the singleton containing the origin of $\mathbb{R}^N$.

In general, the motivation to study unbounded domains that have fractal properties may be found in problems from oscillation theory \cite{Dzu,Karp}, automotive industry \cite{She}, aerodynamics \cite{Dol}, civil engineering \cite{Pou} and mathematical applications in biology \cite{May}.
Also, unbounded domains are of interest in problems of partial differential equations, for instance, solvability of Dirichlet problems for quasilinear equations in unbounded domains \cite{Maz1} and \cite[Section~15.8.1]{mazja}.
See also~\cite{An,Hur,La,Rab} and~\cite{VoGoLat}.
Furthermore, fractal properties of unbounded trajectories of some planar vector fields were studied in \cite{razuzup} in connection to the Hopf bifurcation at infinity.
In that paper, the geometric inversion was used to bring the system near zero instead of near infinity and the classical Minkowski dimension of the geometrically inverted system was then studied.
Here we study different objects and work directly at infinity but show, nevertheless, that the connection with geometric inversion is also present.

The Paper is organized as follows.
In Section \ref{inf_box_def} we recall the most important definitions and results about Minkowski dimension and content of sets at infinity and their associated fractal zeta functions.
Most of these results were proved in \cite{ra2}.

In Section \ref{mero_ext_sec} we provide general results about meromorphic extension of fractal zeta functions at infinity.
We introduce the tube zeta function at infinity and provide a functional equation connecting it to the distance zeta function at infinity in  Theorem \ref{tubedistinf}.
Next, under suitable hypothesis, Theorem \ref{restubinf} establishes a connection between the residue of the tube zeta function at the point equal to the Minkowski dimension of the associated RFD and its (upper and lower) Minkowski content.
Furthermore, Theorems \ref{Mink_measurable_inf} and \ref{Mink_nonmeasurable_inf} establish, again under suitable hypotheses, the existence of a meromorphic extension of the tube zeta function at infinity under the assumption that the underlying set is Minkowski measurable and nonmeasurable, respectively.
Moreover, Theorem \ref{mink_suff} establishes a sufficient condition for the set to be Minkowski measurable at infinity while Theorem \ref{mink_bound_inf} provides an upper bound for the upper Minkowski content in terms of the residue of its tube (or distance) zeta function.
Finally Theorem \ref{inf_mink_con} establishes a connection between the Minkowski measurability of the set at infinity and its geometrically inverted image.  

Section \ref{prop_sec_ty} is dedicated to establishing some of the more technical but very useful general properties of fractal zeta functions at infinity needed later on, such as the scaling property (Proposition \ref{scaling_prop}) and general behavior under the change of the norm on $\mathbb{R}^N$ in Theorem \ref{equiv_mero}.

In the final Section \ref{qp_sets} we construct maximally hyperfractal  sets at infinity of prescribed Minkowski dimension in Theorem \ref{qp_construction}.
Then we show that these sets can be used to generate algebraically and transcedentally quasiperiodic sets at infinity of any order in Theorems \ref{n-quasi} or infinite order in Theorem \ref{ty_quasi}.
The construction of classical compace sets and RFDs which are algebraically quasiperiodic is an open problem \cite[Problem 6.2.3]{fzf}.
Here we solve it in the setting of sets at infinity and also explain how they can be used to obtain clasical RFDs which are algebraically quasiperiodic by using geometric inversion.
The problem of finding algebraically quasiperiodic compact sets is still open.

\section{Minkowski dimension and fractal zeta functions of sets at infinity}\label{inf_box_def}

In this section we recall the most important definitions and results from \cite{ra2}.
Let $\O$ be a Lebesgue measurable subset $\eR^N$ of finite Lebesgue measure, i.e., $|\O|<\infty$; $|\cdot|$ denotes the $N$-dimensional Lebesgue measure.
We let
\begin{equation}\label{kratica}
_t\O:=B_t(0)^c\cap\O,
\end{equation}
where $t>0$ and $B_t(0)^c$ is the complement of the open ball of radius $t$ centered at $0$.
For any real number $r$ one defines the {\em upper $r$-dimensional Minkowski content} of $\O$ {\em at infinity}
\begin{equation}\label{upMinkinf}
{\ovb{\M}}^{r}(\ty,\O):=\limsup_{t\to+\infty}\frac{|{_t\O}|}{t^{N+r}},
\end{equation}
and, analogously, by taking the lower limit in \eqref{upMinkinf} as $t\to +\infty$, the {\em lower $r$-dimensional Minkowski content} of $\O$ {\em at infinity} denoted by ${\unb{\M}}^{r}(\ty,\O)$.

It is easy to see that the above definition implies the existence of a unique $D\in\eR$ such that ${\ovb{\M}}^{r}(\ty,\O)=+\ty$ for $r<\ovb{D}$ and ${\ovb{\M}}^{r}(\ty,\O)=0$ for $r>\ovb{D}$ and analogously for the lower Minkowski content; see Figure \ref{fig:1}.
The value $\ovb{D}$ is called the {\em upper Minkowski dimension of $\O$ at infinity}, ${{\ovb{\dim}}}_B(\ty,\O)$ or the {\em upper Minkowski dimension} of $(\infty,\O)$, i.e., one has
\begin{equation}\label{updiminf}
\begin{aligned}
{{\ovb{\dim}}}_B(\ty,\O):=&\sup\{r\in\eR:{\ovb{\M}}^{r}(\ty,\O)=+\ty\}\\
=&\inf\{r\in\eR:{\ovb{\M}}^{r}(\ty,\O)=0\},
\end{aligned}
\end{equation}
and similarly for the lower analog denoted by ${\unb{\dim}}_B(\ty,\O)$.
If the upper and lower Minkowski dimensions coincide, we say that the {\em Minkowski dimension} of $(\infty,\O)$ exists and denote it by $\dim_B(\ty,\O)$.

Furthermore, in the case when the upper and lower $r$-dimesninal Minkowski contents of $(\infty,\O)$ coincide we say that the {\em $r$-dimensional Minkowski content} of $(\infty,\O)$ exists and denote it by $\M^{r}(\ty,\O)$.
Moreover, in the case when 
$
0<{\unb{\M}}^{D}(\ty,\O)\leq{\ovb{\M}}^{D}(\ty,\O)<+\ty,
$
for some $D\in\eR$ (necessarily $D=\dim_B(\ty,\O)$ in that case), we say that $(\infty,\O)$ is {\em Minkowski nondegenerate}.
Finally, $(\infty,\O)$ is said to be {\em Minkowski measurable} if it is Minkowski nondegenerate and its lower and upper Minkowski content coincide.

The next simple facts were proved in \cite{ra2} and we recall them here for completeness.
For any Lebesgue measurable $\O\subseteq\eR^N$ one has that $-\infty\leq{\unb{{\dim}}}_B(\ty,\O)\leq{{\ovb{\dim}}}_B(\ty,\O)\leq-N$.
Furthermore, both, $-\infty$ and $-N$ can be attained; see \cite[Example 3 and Proposition 2]{ra2}.
Moreover, if ${\ovb{\dim}}_B(\ty,\O)=-N$, then one always has that ${\ovb{\M}}^{-N}(\ty,\O)=0$ which follows directly from the definition.
This should not be surprising since the set $A$ is just a point at infinity, hence, one should not expect that the Minkowski dimension of $(\infty,\O)$ can be larger than $0$.
The fact that it cannot be larger than $-N$ is actually connected to the fact that $\Omega$ has finite volume.
We will show in a future paper that the ``dimensional gap interval'' $(-N,0]$ is actually ``reserved'' for RFDs $(\infty,\Omega)$ where we let $\Omega$ to have infinite volume.
Of course, in that case the definition of its Minkowski content and the corresponding fractal zeta functions must be modified accordingly since $|_t\O|$ is infinite.

We point out that also classical RFDs with negative dimension exist; see \cite{fzf} where this feature is explained by the lack of the so-called cone property. 
Although the dimension of $(\infty,\Omega)$ is always negative and therefore, seems uninteresting at first, we will show that there exist rich families of unbounded sets $\Omega$ whose complex dimensions have complicated quasiperiodic structures.
Therefore, the source of ``fractality'' in the sense of Lapidus, i.e., the fact that $(\infty,\O)$ possesses non-real complex dimensions, stems solely from the unbounded set $\Omega$.
This also shows that in general, one has to be careful since the source of ``fractality'' of an RFD $(A,\O)$ could be from both sets, $A$ and $\O$.
On the other hand, we conjecture that this cannot happen if $\O$ is metrically associated to $A$.

\begin{figure}[h]
\begin{center}
\includegraphics[width=10cm]{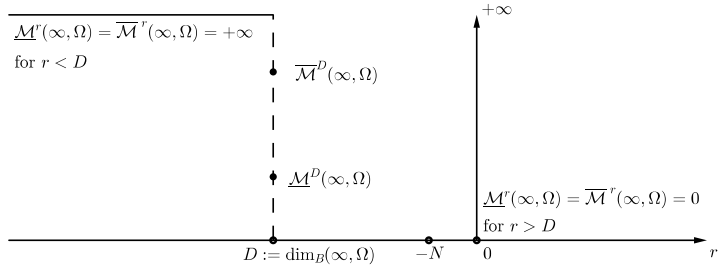}
\end{center}
\label{fig:1}
\caption{The graphs of the functions $r\mapsto\ovb{\mathcal{M}}^{r}(\ty,\O)$ and $r\mapsto\unb{\mathcal{M}}^{r}(\ty,\O)$, assuming that $\O$ is Minkowski
nondegenerate and nonmeasurable at infinity, that is, $D:=\dim_B(\ty,\O)$ exists and $0<\unb{\mathcal{M}}^{r}(\ty,\O)<\ovb{\mathcal{M}}^{r}(\ty,\O)<\ty$.}
\label{mink_besk}
\end{figure}

The next two examples from \cite[Examples 1 and 2]{ra2} give a glimpse of nontrivial RFDs at infinity.

\begin{example}
	Let $\a>0$ and $\b>1$ be fixed and let
$
a_j:=j^{\a}$, $l_j:=j^{-\b}$, $b_j:=a_j+l_j
$ and $I_j:=(a_j,b_j)$ for $j\in\mathbb{N}$.
Consider
\begin{equation}\label{omega_a_b}
\O(\a,\b):=\bigcup_{j=1}^{\ty}I_j\subseteq\eR,
\end{equation}
then,
\begin{equation}\label{dim(a,b)}
D:=\dim_B(\ty,\O(\a,\b))=\frac{1-(\a+\b)}{\a}\quad\textrm{ and }\quad\M^{D}(\ty,\O(\a,\b))=\frac{1}{\b-1}.
\end{equation}
Observe that by varying parameters $\a$ and $\b$, we can obtain any value in $(-\ty,-1)$ for $\dim_B(\ty,\O(\a,\b))$.
\end{example}

The next example will be one of the crucial building blocks for the construction of quasiperiodic sets at infinity.

\begin{example}\label{standardniprim}
For $\a>1$ let $\O:=\{(x,y)\in\eR^2\,:\,x>1,\ 0<y<x^{-\a}\}$. Then we have that
\begin{equation}\label{dimstandardniprim}
D:=\dim_B(\ty,\O)=-1-\a\quad \mathrm{ and }\quad \M^{D}(\ty,\O)=\frac{1}{\a-1}.
\end{equation}
Observe that $\dim_B(\ty,\O)\to -\ty$ and $\M^{D}(\ty,\O)\to 0$ as $\alpha\to +\ty$
\end{example}

In general, the notion of Minkowski dimension at infinity and Minkowski nondegeneracy at infinity do not depend on the choice of the norm on $\eR^N$ in which we define the ball $B_t(0)$.
More precisely, if $K_t(0)^c$ denotes a complement of a ball in another (necessarily, equivalent) norm $\|\cdot\|$ on $\eR^N$, we define the analog of the (upper and lower) Minkowski content at infinity by using $|K_t(0)^c\cap\Omega|$ in \eqref{upMinkinf} instead of $|_t\O|$, the two Minkowski dimensions of $(\infty,\Omega)$ will coincide.
Furthermore, the notion of Minkowski nondegeneracy is invariant by such changes of the norm; see \cite[Lemma 2 and Corollary 2]{ra2}.

In the remainder of this section we recall the definition and basic properties of the {\em Lapidus zeta function} at infinity (also called the {\em distance zeta function of $(\infty,\O)$}) from \cite[Section 3]{ra2} defined by the Lebesgue integral
\begin{equation}\label{infzeta}
\zeta_{\ty,\O}(s):=\zeta_{\ty,\O}(s;T)=\int_{{B_T(0)^c\cap\O}}|x|^{-s-N}\di x,
\end{equation}
for some fixed $T>0$ and $s$ in $\Ce$ with $\re s$ sufficiently large.

The dependence on $T>0$ is not important since changing $T$ amounts to adding an entire function to \eqref{infzeta} and we are only interested in possible singularities of \eqref{infzeta}.
The distance zeta function of $(\infty,\O)$ is closely related to the classical distance zeta function of the ``geometrically inverted'' relative fractal drum $(\mathbf{O},\Phi(\Omega))$.
More precisely, they are connected by a functional equation \cite[Theorem 3]{ra2}: $\zeta_{\ty,\O}(s;T)=\zeta_{\mathbf{O},\Phi(\O)}(s;1/T)$; hence, from the point of view of complex dimensions it is completely equivalent to either study $(\infty,\Omega)$ or its geometric inversion $(\mathbf{O},\Phi(\O))$.

We now state a part of the holomorphicity theorem \cite[Theorem 5]{ra2} for the distance zeta function at infinity for the sake of exposition.
Recall also that we define the {\em abscissa of convergence} of $\zeta_{\infty,\O}$ as the infimum of all $\sigma\in\mathbb{R}$ such that the integral \eqref{infzeta} is absolutely convergent for all $s\in\mathbb{C}$ such that $\re s>\sigma$ and we denote it by $D(\zeta_{\infty,\O})$.

\begin{theorem}[{Holomorphicity theorem \cite[Theorem 5]{ra2}}]\label{analiticinf}
Let $\O$ be any Lebesgue measurable subset of $\eR^N$ of finite $N$-dimensional Lebesgue measure.
Assume that $T$ is a fixed positive number.
Then the following conclusions hold.

\noindent $(a)$ The abscissa of convergence of the Lapidus zeta function at infinity
\begin{equation}\label{inteqzeta}
\zeta_{\ty,\O}(s)=\int_{{_T\O}}|x|^{-s-N}\di x
\end{equation}
is equal to the upper box dimension of $\O$ at infinity, i.e.,
\begin{equation}
D(\zeta_{\ty,\O})=\ovb{\dim}_B(\ty,\O).
\end{equation}
Consequently, $\zeta_{\ty,\O}$ is holomorphic on the half-plane $\{\re s>{\ovb{\dim}}_B(\ty,\O)\}$.

\noindent $(b)$ The half-plane from $(a)$ is optimal.\footnote{Optimal in the sense that the integral appearing in~\eqref{inteqzeta} is diveregent for real $s\in(-\ty,\ovb{D})$.}

\noindent $(c)$ If $D=\dim_B(\ty,\O)$ exists and ${\unb{\M}}^{D}(\ty,\O)>0$, then $\zeta_{\ty,\O}(s)\to+\ty$ for $s\in\eR$ as $s\to D^+$.
\end{theorem}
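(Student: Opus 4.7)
The strategy is to reduce the integral in~\eqref{inteqzeta} to a one-dimensional Stieltjes integral involving the non-increasing ``tail volume'' function $\mu(r) := |{_r\O}|$, which is absolutely continuous on $[T,\infty)$ and satisfies $\mu(r) \to 0$ as $r \to +\infty$ since $|\O| < \infty$. Fubini in radial coordinates gives $\zeta_{\ty,\O}(s) = \int_T^\infty r^{-s-N}\,d(-\mu(r))$, and Riemann--Stieltjes integration by parts---valid once one checks that $r^{-s-N}\mu(r) \to 0$ as $r \to +\infty$---yields the key identity
\begin{equation*}
\zeta_{\ty,\O}(s) = T^{-s-N}\mu(T) - (s+N)\int_T^\infty r^{-s-N-1}\mu(r)\di r.
\end{equation*}
For part $(a)$, given $\re s > \ovb{D} := \ovb{\dim}_B(\ty,\O)$, I would pick $r_0 \in (\ovb{D},\re s)$; since $\ovb{\mathcal{M}}^{r_0}(\ty,\O) < +\infty$, there is $C > 0$ with $\mu(r) \leq C r^{N+r_0}$ for all $r \geq T$. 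Then $r^{-\re s - N}\mu(r) \leq C r^{r_0 - \re s} \to 0$, justifying the boundary term at infinity, while the residual integral is dominated by $C\int_T^\infty r^{r_0 - \re s - 1}\di r < \infty$. Standard arguments on integrals depending on a complex parameter then yield absolute convergence and holomorphy on the half-plane $\{\re s > \ovb{D}\}$, so $D(\zeta_{\ty,\O}) \leq \ovb{D}$.

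For the optimality statement in $(b)$, I fix real $s < \ovb{D}$ and crucially invoke the universal bound $\ovb{\dim}_B(\ty,\O) \leq -N$ recalled in the text: this forces $s + N < 0$, so $|x|^{-s-N}$ is \emph{increasing} in $|x|$. Choosing $r \in (s,\ovb{D})$, the relation $\ovb{\mathcal{M}}^r(\ty,\O) = +\ty$ produces a sequence $t_k \to +\infty$ with $\mu(t_k) \geq t_k^{N+r}$, and then
\begin{equation*}
\zeta_{\ty,\O}(s) \;\geq\; \int_{{_{t_k}\O}} |x|^{-s-N}\di x \;\geq\; t_k^{-s-N}\mu(t_k) \;\geq\; t_k^{r-s} \;\longrightarrow\; +\infty
\end{equation*}
as $k\to\infty$ (since $r > s$), forcing divergence of the integral for real $s < \ovb{D}$. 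Combined with $(a)$, this yields $D(\zeta_{\ty,\O}) = \ovb{D}$.

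For part $(c)$ I first observe that the boundary case $D = -N$ is vacuous under the hypothesis: as noted in the text, $\ovb{\mathcal{M}}^{-N}(\ty,\O) = 0$ always, so $\unb{\mathcal{M}}^{-N}(\ty,\O) = 0$ as well, contradicting positivity. Hence necessarily $D < -N$, and $\unb{\mathcal{M}}^D(\ty,\O) > 0$ supplies $c > 0$ and $T' \geq T$ with $\mu(r) \geq c r^{N+D}$ for $r \geq T'$. In the regime $D < s < -N$, where $-(s+N) > 0$ and both terms on the right of the key identity are non-negative, I estimate
\begin{equation*}
\zeta_{\ty,\O}(s) \;\geq\; -(s+N)\int_{T'}^\infty r^{-s-N-1}\mu(r)\di r \;\geq\; \frac{-(s+N)\,c\,(T')^{D-s}}{s-D},
\end{equation*}
and let $s \to D^+$: the numerator tends to $-(D+N)c > 0$ while the denominator tends to $0^+$, so $\zeta_{\ty,\O}(s)\to +\infty$, as claimed. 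The main obstacle throughout is the careful bookkeeping of the sign of $s+N$, which simultaneously governs the monotonicity of $|x|^{-s-N}$ (driving the lower bound in $(b)$) and the sign of the leading coefficient in the integration-by-parts identity (driving the lower bound in $(c)$); the universal inequality $\ovb{\dim}_B(\ty,\O)\leq -N$ is precisely the structural ingredient that simultaneously makes the optimality argument of $(b)$ work and the boundary case $D=-N$ in $(c)$ vacuous.
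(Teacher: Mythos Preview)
The paper does not prove this theorem here; it is cited from \cite[Theorem~5]{ra2}, so there is no in-paper proof to compare against directly. Your argument is correct, and in fact the integration-by-parts identity you derive,
\[
\zeta_{\ty,\O}(s)=T^{-s-N}|_T\O|-(s+N)\int_T^{+\ty}t^{-s-N-1}|_t\O|\di t,
\]
is precisely the functional equation established later in the paper as Theorem~\ref{tubedistinf} (whose proof in turn quotes \cite[Proposition~4]{ra2} for real $s$ and then invokes analytic continuation). So your approach coincides with the machinery underlying the cited reference: push the distance zeta integral forward to a one-dimensional Stieltjes integral against the tail-volume function $t\mapsto|_t\O|$, integrate by parts to expose the tube zeta function, and control everything via the asymptotics of $|_t\O|$ encoded in the upper and lower Minkowski contents. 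Your systematic use of the structural bound $\ovb{\dim}_B(\ty,\O)\le -N$ to fix the sign of $s+N$ is exactly the right observation: it simultaneously makes $|x|^{-s-N}$ increasing on $_T\O$ for real $s<\ovb{D}$ (yielding the divergence in (b)), renders the coefficient $-(s+N)$ positive for $D<s<-N$ (yielding the blow-up in (c)), and disposes of the boundary case $D=-N$ in (c) via the remark that $\ovb{\M}^{-N}(\ty,\O)=0$ always.
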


\begin{remark}
In Theorem~\ref{analiticinf} one may replace the norm appearing in the definition of the distance zeta function at infinity and in the definition od the Minkowski content at infinity by any other norm on $\eR^N$ to obtain a completely analog result.
Furthermore by \cite[Proposition 6]{ra2} the difference $\zeta_{\ty,\O}(s;|\,\cdot\,|_{\ty})-\zeta_{\ty,\O}(s)$ is holomorphic at least on the half-plane $\{\re s>\ovb{\dim}_B(\infty,\Omega)-2\}$, where $\zeta_{\ty,\O}(s;|\,\cdot\,|_{\ty})$ is the distance zeta function of $(\infty,\O)$ defined by using the $\infty$-norm on $\mathbb{R}^N$.
This result is very practical for obtaining results about the (Euclidean) distance zeta function $\zeta_{\ty,\O}(s)$ and determining its poles in that half-plane.
Since a complete proof of this result was not given in \cite{ra2} due to space constrains we give the proof of a more general results; Theorem \ref{equiv_mero} and Proposiion \ref{euc_ty} from which \cite[Proposition 6]{ra2} follows directly.
\end{remark}

For example, the distance zeta function of $(\infty,\Omega)$ from Example \ref{standardniprim} can be computed explicitly (in the $|\cdot|_{\infty}$-norm on $\eR^2$) and is shown to be meromorphic in all of $\Ce$ and given by $\zeta_{\infty,\Omega}(s)=1/(s+\alpha+1)$, having a single simple pole at $s=-1-\alpha$.

Next we recall the notion of complex dimensions of $(\ty,\O)$ from \cite{ra2}.
Namely, if the distance zeta function $\zeta_{\infty,\O}$ possesses a meromorphic extension to some open connected neighborhood $W$ (usually called the {\em window}) of the half-plane $\{\re s\geq\ovb{\dim}_B(\ty,\O)\}$, one defines the {\em set of visible complex dimensions of $(\ty,\O)$ through $W$} as the set of poles of the distance zeta function $\zeta_{\ty,\O}$ that are contained in $W$ and denote it by
\begin{equation}\label{po_vis}
\po(\zeta_{\ty,\O},W):=\{\omega\in W:\mbox{$\omega$ is a pole of }\zeta_{\ty,\O}\}
\end{equation}
which one usually abbreviates to $\po(\zeta_{\ty,\O})$ if there is no ambiguity concerning the choice of $W$ (or when $W=\Ce$).

The subset of $\po(\zeta_{\ty,\O},W)$ consisting of poles with real part equal to $\ovb{\dim}_B(\ty,\O)$ is then called the set of {\em principal complex dimensions of $(\ty,\O)$} and denoted by $\dim_{PC}(\ty,\O)$.

\section[Meromorphic extensions of zeta functions and complex dimensions]{Meromorphic extensions of fractal zeta functions at infinity}\label{mero_ext_sec}

In general it is difficult to obtain a closed formula for the distance zeta function which would then induce its mermorphic extension to a larger domain containing the original half-plane of holomorphicity.

In this section we will give sufficient conditions on the Lebesgue measurable set $\O\subseteq\eR^N$ of finite Lebesgue measure which will ensure that the Lapidus zeta function of $\O$ at infinity has a meromorphic continuation to a neighborhood of its critical line.
Firstly, we will state and prove the theorems in terms of the tube zeta function at infinity and then, by using the functional equation between the Lapidus (distance) and the tube zeta function at infinity (see Theorem~\ref{tubedistinf}), we will obtain the analog statements in terms of the Lapidus zeta function at infinity.

Furthermore, we will also give a sufficient condition for a relative fractal drum $(\ty,\O)$ to be Minkowski measurable at infinity in terms of its distance or tube zeta function at infinity.

In \cite[Theorem 6]{ra2} it was already shown, under mild hypotheses, that the residue of the distance zeta function of $(\infty,\O)$ is closely related to its (upper and lower) Minkowski content at infinity.
In particular, under suitable hypotheses we have that $\res(\zeta_{\ty,\O},D)=-(N+D)\M^{D}(\ty,\O)$ where $D=\dim_B(\infty,\O)$.

Let $\O$ be a Lebesgue measurable subset of $\eR^N$ and $|\O|<\ty$.
Similarly as in the case of standard relative fractal drums \cite{fzf} we define the {\em tube zeta function} of $\O$ {\em at infinity} and denote it with $\widetilde{\zeta}_{\ty,\O}$:
\begin{equation}\label{tubeinf}
\widetilde{\zeta}_{\ty,\O}(s;T):=\int_{T}^{+\ty}t^{-s-N-1}|_t\O|\di t,
\end{equation}
where $T>0$ is fixed.
The next theorem establishes the aforementioned functional equation, from which the analyticity of the tube zeta function will follow.

\begin{theorem}[Functional equation between tube and distance zeta functions at infinity]\label{tubedistinf}
Let $\O\subseteq\eR^N$ with $|\O|<\ty$ and let $T>0$ be fixed.
Then for every $s\in\Ce$ such that $\re s>{\ovb{\dim}}_B(\ty,\O)$ it holds that
\begin{equation}\label{tubedistinfeq}
\int_{_T\O}|x|^{-s-N}\di x=T^{-s-N}|_T\O|-(s+N)\int_T^{+\ty}t^{-s-N-1}|_t\O|\di t,
\end{equation}
i.e., the following functional equation holds:
\begin{equation}
\zeta_{\ty,\O}(s;T)=T^{-s-N}|_T\O|-(s+N)\widetilde{\zeta}_{\ty,\O}(s;T).
\end{equation}
\end{theorem}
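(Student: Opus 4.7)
The plan is to derive the identity from a pointwise integration-by-parts formula, integrate it over ${_T\Omega}$, and then swap the order of integration via Fubini--Tonelli. The key elementary observation is that, for any $s\in\Ce$ and any $r\geq T>0$,
\begin{equation*}
r^{-s-N}=T^{-s-N}-(s+N)\int_T^{r}t^{-s-N-1}\di t,
\end{equation*}
since the right-hand side is the Newton--Leibniz formula applied to the antiderivative $t\mapsto t^{-s-N}$ of $-(s+N)t^{-s-N-1}$. Specializing to $r=|x|$ for $x\in{_T\Omega}=B_T(0)^c\cap\Omega$ gives a pointwise representation of the integrand $|x|^{-s-N}$ of the distance zeta function.

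Next, I would integrate this pointwise identity over ${_T\Omega}$. Using $|{_T\Omega}|\leq|\O|<\ty$, the first term immediately yields the boundary contribution $T^{-s-N}|_T\O|$. For the second term I would write $\int_T^{|x|}t^{-s-N-1}\di t=\int_T^{+\ty}t^{-s-N-1}\mathbbm{1}_{\{t<|x|\}}\di t$ and swap the order of integration; the slice $\{x\in{_T\Omega}:|x|>t\}$ is exactly ${_t\Omega}$ for $t\geq T$, so the inner integral produces $|_t\O|$ and one recovers the tube zeta function on the right-hand side.

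The main obstacle is the justification of Fubini's theorem, since for general $s\in\Ce$ the integrand is complex-valued. The natural route is first to prove the identity for real $s=\sigma>\ovb{\dim}_B(\ty,\O)$: for such $\sigma$ all integrands are nonnegative, so Tonelli applies unconditionally, and absolute convergence of the left-hand side is guaranteed by the holomorphicity Theorem~\ref{analiticinf}(a). The same argument, applied with $s$ replaced by $\re s$, shows that both $\int_{_T\O}|x|^{-\re s-N}\di x$ and $\int_T^{+\ty}t^{-\re s-N-1}|_t\O|\di t$ are finite whenever $\re s>\ovb{\dim}_B(\ty,\O)$; this furnishes the absolute integrability of $(x,t)\mapsto t^{-s-N-1}\mathbbm{1}_{\{T\leq t<|x|\}}$ on ${_T\O}\times(T,+\ty)$ needed for the complex Fubini theorem.

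Once Fubini is in force on the whole half-plane $\{\re s>\ovb{\dim}_B(\ty,\O)\}$, the computation reassembles into
\begin{equation*}
\zeta_{\ty,\O}(s;T)=T^{-s-N}|_T\O|-(s+N)\widetilde{\zeta}_{\ty,\O}(s;T),
\end{equation*}
which is the claimed functional equation. As a by-product one also obtains, from the finiteness of the tube integral on this half-plane, the absolute convergence (and hence holomorphicity) of $\widetilde{\zeta}_{\ty,\O}(\,\cdot\,;T)$ there, a fact that will be used in the subsequent sections.
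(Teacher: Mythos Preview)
Your argument is correct and complete. The key difference from the paper's proof is the passage from real to complex $s$. You do this by a direct Fubini argument: once Tonelli (for $\sigma=\re s>\ovb{\dim}_B(\ty,\O)$) shows that the double integral $\int_{{_T\O}}\int_T^{+\ty}t^{-\sigma-N-1}\mathbbm{1}_{\{t<|x|\}}\di t\di x$ is finite, that same finiteness is precisely the absolute-integrability hypothesis needed to run Fubini at any complex $s$ with $\re s=\sigma$, and the identity drops out in one stroke. The paper instead takes a complex-analytic route: it cites the real case from \cite[Proposition~4]{ra2}, then separately proves that each side of \eqref{tubedistinfeq} is holomorphic on $\{\re s>\ovb{\dim}_B(\ty,\O)\}$ (the right-hand side via the Minkowski-content estimate $|_t\O|\le C_T\,t^{N+\ovb{D}+\e}$), and concludes by the identity theorem.

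Your approach is more self-contained and elementary (no external citation for the real case, no appeal to the identity theorem), and it yields the absolute convergence of $\widetilde{\zeta}_{\ty,\O}$ on the half-plane as a free by-product of Tonelli rather than as a separate estimate. The paper's route, on the other hand, makes the holomorphicity of the tube zeta function explicit via a quantitative bound on $|_t\O|$, which is informative in its own right and foreshadows the Minkowski-content estimates used later in Section~\ref{mero_ext_sec}.
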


\begin{proof}
Firstly, from \cite[Proposition 4]{ra2} we have that~\eqref{tubedistinfeq} is valid for a real number $s$ such that $s>{\ovb{\dim}}_B(\ty,\O)$.
To show that the equality holds in the half-plane $\{\re s>{\ovb{\dim}}_B(\ty,\O)\}$ it suffices to prove that both sides of Equation~\eqref{tubedistinfeq} are holomorphic functions on that domain.
The left-hand side of~\eqref{tubedistinfeq} is holomorphic in $\{\re s>{\ovb{\dim}}_B(\ty,\O)\}$ according to Theorem~\ref{analiticinf}.
The same is valid for the right-hand side of~\eqref{tubedistinfeq}.
Namely, this is a Dirichlet type integral with $\varphi(t)=t^{-s}$ and $\di\mu(t)=t^{-N-1}|_t\O|\di t$, and according to \cite[Theorem 2.1.44$(a)$]{fzf} it is sufficient to show that this integral is absolutely convergent for $\re s>{\ovb{\dim}}_B(\ty,\O)$.

For $\ovb{D}:={\ovb{\dim}}_B(\ty,\O)$ and $s\in\Ce$ such that $\re s>\ovb{D}$, let us choose $\e>0$ sufficiently small such that $\re s>\ovb{D}+\e$.
Since ${\ovb{\M}}_{\ty}^{\ovb{D}+\e}(\O)=0$, there exists a constant $C_T>0$ such that $|_t\O|\leq C_Tt^{N+\ovb{D}+\e}$ for every $t\in(T,+\ty)$. 
Now we have the following estimate:
\begin{equation}
\begin{aligned}
|\widetilde{\zeta}_{\ty,\O}(s;T)|&\leq\int_{T}^{+\ty}t^{-\re s-N-1}|_t\O|\di t\leq C_T\int_{T}^{+\ty}t^{-\re s-N-1}t^{N+\ovb{D}+\e}\di t\\
&=C_T\int_{T}^{+\ty}t^{\ovb{D}+\e-\re s-1}\di t=C_T\frac{T^{\ovb{D}+\e-\re s}}{\re s-(\ovb{D}+\e)}<+\ty.
\end{aligned}
\end{equation}
This completes the proof of the theorem.
\end{proof}

\begin{remark}
In light of the functional equation between the distance and tube zeta functions of $\O$ at infinity stated in Theorem~\ref{tubedistinf}, it is clear that the definition of (principal) complex dimensions can be also stated in terms of the tube zeta function $\widetilde{\zeta}_{\ty,\O}$ at infinity instead in terms of the distance zeta function $\zeta_{\ty,\O}$ at infinity.
Moreover, we have that
\begin{equation}
\dim_{PC}(\ty,\O)=\po_c(\zeta_{\ty,\O}=\po_c(\widetilde{\zeta}_{\ty,\O})
\end{equation}
and
\begin{equation}
\po(\zeta_{\ty,\O},W)=\po(\widetilde{\zeta}_{\ty,\O},W).
\end{equation}
\end{remark}

The next theorem is a consequence and analog of \cite[Theorem 6]{ra2} in terms of the tube zeta function of $(\infty,\O)$, its residue at $s=\dim_B(\ty,\O)$ and its upper and lower Minkowski contents.

\begin{theorem}[Residue and Minkowski content connection]\label{restubinf}
Let $\O\subseteq\eR^N$ be such that $|\O|<\ty$, $\dim_B(\ty,\O)=D<-N$ and $0<{\unb{\M}}^{D}(\ty,\O)\leq{\ovb{\M}}^{D}(\ty,\O)<\ty$.
If $\widetilde{\zeta}_{\ty,\O}$ has a meromorphic continuation to a neighborhood of $s=D$, then $D$ is a simple pole and it holds that
\begin{equation}\label{mink_res_inf_tube}
{\unb{\M}}^{D}(\ty,\O)\leq\res(\widetilde{\zeta}_{\ty,\O},D)\leq{\ovb{\M}}^{D}(\ty,\O).
\end{equation}
Moreover, if $\O$ is Minkowski measurable at infinity, then we have
\begin{equation}
\res(\widetilde{\zeta}_{\ty,\O},D)=\M^{D}(\ty,\O).
\end{equation}
\end{theorem}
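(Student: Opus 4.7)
The plan is to sandwich the one-sided real limit $\lim_{s\to D^+}(s-D)\widetilde{\zeta}_{\ty,\O}(s;T)$ between $\unb{\M}^{D}(\ty,\O)$ and $\ovb{\M}^{D}(\ty,\O)$ using the definitions of the Minkowski contents, and then exploit the assumed meromorphic extension to convert this asymptotic statement into a statement about the residue while simultaneously pinning the order of the pole to one.

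First I would fix $\varepsilon>0$ and, using $0<\unb{\M}^{D}\leq\ovb{\M}^{D}<\infty$, choose $T_\varepsilon\geq T$ so that
\begin{equation*}
(\unb{\M}^{D}(\ty,\O)-\varepsilon)\,t^{N+D}\leq |{_t\O}|\leq(\ovb{\M}^{D}(\ty,\O)+\varepsilon)\,t^{N+D}\qquad\text{for all }t>T_\varepsilon.
\end{equation*}
Then I would split the defining integral of $\widetilde{\zeta}_{\ty,\O}(s;T)$ as $I_1(s)+I_2(s)$ with $I_1$ over $[T,T_\varepsilon]$ and $I_2$ over $[T_\varepsilon,+\infty)$. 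The piece $I_1$ is entire in $s$ (integration against $t^{-s-N-1}$ over a compact interval), so $(s-D)I_1(s)\to 0$ as $s\to D^+$. Combining the sandwich with the elementary identity $\int_{T_\varepsilon}^{\infty}t^{D-s-1}\,\D t=T_\varepsilon^{D-s}/(s-D)$, valid for real $s>D$, yields
\begin{equation*}
(\unb{\M}^{D}-\varepsilon)\,T_\varepsilon^{D-s}\leq (s-D)I_2(s)\leq (\ovb{\M}^{D}+\varepsilon)\,T_\varepsilon^{D-s}.
\end{equation*}
Letting $s\to D^+$ and then $\varepsilon\to 0$, one concludes that
\begin{equation*}
\unb{\M}^{D}(\ty,\O)\leq\liminf_{s\to D^+}(s-D)\widetilde{\zeta}_{\ty,\O}(s;T)\leq\limsup_{s\to D^+}(s-D)\widetilde{\zeta}_{\ty,\O}(s;T)\leq\ovb{\M}^{D}(\ty,\O).
\end{equation*}

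The meromorphy hypothesis next provides a Laurent expansion $\widetilde{\zeta}_{\ty,\O}(s;T)=\sum_{k\geq -m}c_k(s-D)^k$ near $D$, in which the coefficients $c_k$ are real because $\widetilde{\zeta}_{\ty,\O}(s;T)\in\eR$ for real $s>D$. Consequently $(s-D)\widetilde{\zeta}_{\ty,\O}(s;T)$ has a well-defined one-sided limit at $D^+$: it equals $c_{-1}$ if $m=1$, it equals $0$ if $m=0$, and it is $\pm\infty$ if $m\geq 2$ (the sign being that of $c_{-m}$). The finite upper bound rules out $m\geq 2$, and the strictly positive lower bound rules out $m=0$; hence $D$ is a simple pole and the common limit is $\res(\widetilde{\zeta}_{\ty,\O},D)$, which therefore lies in $[\unb{\M}^{D}(\ty,\O),\ovb{\M}^{D}(\ty,\O)]$. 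The Minkowski measurable case is immediate upon collapsing the two Minkowski contents to $\M^{D}(\ty,\O)$.

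The main subtlety is the simple-pole step: the sandwich alone controls only the liminf and limsup along the real axis from the right, and it is the global meromorphic extension that must be invoked to promote these into an actual limit and simultaneously exclude higher-order poles. A shorter alternative I would mention is to transport \cite[Theorem 6]{ra2} through the functional equation of Theorem~\ref{tubedistinf}: since $D<-N$, the factor $-(s+N)$ is nonzero at $s=D$, so meromorphy, pole order and residue values transfer cleanly between $\zeta_{\ty,\O}$ and $\widetilde{\zeta}_{\ty,\O}$, with the residues related by the nonzero factor $-(D+N)$ that converts $\res(\zeta_{\ty,\O},D)$ into $\res(\widetilde{\zeta}_{\ty,\O},D)$ and the distance-zeta bounds of \cite{ra2} into the tube-zeta bounds claimed here.
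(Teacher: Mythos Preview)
Your proof is correct. Your main argument---the direct sandwich of $(s-D)\widetilde{\zeta}_{\ty,\O}(s;T)$ using the defining integral together with a Laurent-expansion analysis to pin the pole order---is different from the paper's route. The paper does not argue directly with the tube zeta function at all: it simply invokes the functional equation of Theorem~\ref{tubedistinf} and the already-established distance-zeta analogue \cite[Theorem~6]{ra2}, obtaining $\res(\zeta_{\ty,\O},D)=-(N+D)\res(\widetilde{\zeta}_{\ty,\O},D)$ and reading off the inequalities from there. In other words, the paper's proof is exactly the ``shorter alternative'' you sketch in your final paragraph. Your direct approach has the advantage of being self-contained (it does not require importing the distance-zeta result from \cite{ra2}) and makes the simple-pole conclusion transparent via the real Laurent coefficients; the paper's approach is shorter but relies on the black-box citation and on $D<-N$ so that the factor $-(N+D)$ is nonzero.
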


\begin{proof}
Using the fact that $\zeta_{\ty,\O}(s)=T^{-s-N}|_T\O|-(s+N)\widetilde{\zeta}_{\ty,\O}(s)$ for every $s\in\Ce$ such that $\re s>D$ (proved in Theorem~\ref{tubedistinf}) and by using \cite[Theorem 6]{ra2}, we immediately have
$$
\res(\zeta_{\ty,\O},D)=\lim_{s\to D}(s-D)\left[T^{-s-N}|_T\O|-(s+N)\widetilde{\zeta}_{\ty,\O}(s)\right],
$$
i.e.,
$$
\res(\zeta_{\ty,\O},D)=-(N+D)\res(\widetilde{\zeta}_{\ty,\O},D).
$$
\end{proof}

The next theorem gives a sufficiency condition for the existence of a meromorphic continuation of the tube zeta function at infinity in terms of the asymptotics of the tube function $t\mapsto|_t\O|$ as $t\to +\infty$.

\begin{theorem}[Meromorphic extension - Minkowski measurable case]\label{Mink_measurable_inf}
Let $\O\subseteq\eR^N$ be a Lebesgue measurable set of finite Lebesgue measure such that there exist $\a>0$, $\M\in(0,+\ty)$ and $D<-N$ satisfying
\begin{equation}
|_t\O|=t^{N+D}(\M+O(t^{-\a}))\quad\textrm{ as }t\to +\ty.
\end{equation}
Then, $\dim_B(\ty,\O)$ exists and $\dim_B(\ty,\O)=D$.
Furthermore, $\O$ is Minkowski measurable at infinity with Minkowski content $\M^{D}(\ty,\O)=\M$.
Moreover, the tube zeta function $\widetilde{\zeta}_{\ty,\O}$ has for abscissa of convergence $D(\widetilde{\zeta}_{\ty,\O})=\dim_B(\ty,\O)=D$ and possesses a unique meromorphic continuation $($still denoted by $\widetilde{\zeta}_{\ty,\O})$ to $($at least$)$ the open half-plane $\{\re s>D-\a\}$.
The only pole of $\widetilde{\zeta}_{\ty,\O}$ in this half-plane is $s=D;$ it is simple, and
$$
\res(\widetilde{\zeta}_{\ty,\O},D)=\M.
$$
\end{theorem}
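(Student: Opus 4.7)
The plan is to isolate the leading asymptotic of the tube function inside the integral defining $\widetilde{\zeta}_{\ty,\O}$, compute the associated term explicitly so that the pole at $s=D$ with residue $\mathcal{M}$ is manifest, and then show that the remainder is holomorphic on the strictly larger half-plane $\{\re s > D-\a\}$.

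First, I would read off the Minkowski content and dimension directly from the hypothesis. Dividing the asymptotic by $t^{N+D}$ gives $|_t\O|/t^{N+D} \to \mathcal{M}$ as $t\to +\ty$, so both the lower and upper Minkowski contents at infinity at level $D$ equal $\mathcal{M}\in(0,+\ty)$; hence $\O$ is Minkowski measurable at infinity with $\mathcal{M}^D(\ty,\O)=\mathcal{M}$ and $\dim_B(\ty,\O)=D$. The same two-sided comparison also pins down the abscissa of convergence: the upper bound $|_t\O|\leq C t^{N+D}$ ensures absolute convergence of the integral for $\re s>D$, while the lower bound $|_t\O|\geq (\mathcal{M}/2)t^{N+D}$ (valid for $t$ sufficiently large) forces divergence for $\re s<D$, so $D(\widetilde{\zeta}_{\ty,\O})=D$.

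Next, I would pick $T$ large enough so that the asymptotic gives the pointwise decomposition $|_t\O|=\mathcal{M}\, t^{N+D}+t^{N+D}g(t)$ with $|g(t)|\leq C t^{-\a}$ for all $t\geq T$, and split
\begin{equation}
\widetilde{\zeta}_{\ty,\O}(s;T)=\mathcal{M}\int_T^{+\ty}t^{D-s-1}\di t+\int_T^{+\ty}t^{D-s-1}g(t)\di t.
\end{equation}
The first integral evaluates, for $\re s>D$, to $\mathcal{M}\,T^{D-s}/(s-D)$, which is a meromorphic function on all of $\Ce$ with a single simple pole at $s=D$ of residue $\mathcal{M}$ (since $T^{D-s}|_{s=D}=1$). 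For the second integral, the bound on $g$ yields
\begin{equation}
\int_T^{+\ty}t^{D-\a-\re s-1}\di t<+\ty\quad\textrm{for }\re s>D-\a,
\end{equation}
and an application of the holomorphicity criterion for Dirichlet-type integrals (\cite[Theorem 2.1.44]{fzf}, in the same form already used in the proof of Theorem~\ref{tubedistinf}) shows that this remainder term defines a holomorphic function on $\{\re s>D-\a\}$.

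Combining the two pieces produces the claimed meromorphic extension of $\widetilde{\zeta}_{\ty,\O}$ to $\{\re s>D-\a\}$, with $s=D$ as its only pole, simple, and of residue $\mathcal{M}$; uniqueness of the extension is immediate from the identity theorem for meromorphic functions. No step is really hard here: the only point that deserves care is the choice of $T$ (large enough for the asymptotic decomposition with a uniform $O(t^{-\a})$-control to hold on $[T,+\ty)$), since for smaller $T$ the difference $\widetilde{\zeta}_{\ty,\O}(\,\cdot\,;T)-\widetilde{\zeta}_{\ty,\O}(\,\cdot\,;T')$ is an entire function and thus does not affect the location of poles or the value of the residue.
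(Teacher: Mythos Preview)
Your proposal is correct and follows essentially the same approach as the paper: both arguments split the tube zeta function into the explicit term $\M T^{D-s}/(s-D)$ coming from the leading asymptotic and a remainder $\int_T^{+\ty}t^{D-s-1}O(t^{-\a})\di t$ that is shown to be holomorphic on $\{\re s>D-\a\}$ by the absolute convergence criterion for Dirichlet-type integrals. Your version is slightly more detailed in spelling out the Minkowski measurability, the abscissa of convergence, and the irrelevance of the choice of $T$, but the core computation is identical.
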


\begin{proof}
For a fixed $T>0$ we have
$$
\begin{aligned}
\widetilde{\zeta}_{\ty,\O}(s)&=\int_T^{+\ty}t^{-s-N-1}|_t\O|\di t=\int_T^{+\ty}t^{-s-N-1}t^{N+D}(\M+O(t^{-\a}))\di t\\
&=\M\int_{T}^{+\ty}t^{D-s-1}\di t+\int_{T}^{+\ty}t^{-s}O(t^{D-\a-1})\di t\\
&=\underbrace{\frac{\M T^{D-s}}{s-D}}_{\zeta_1(s)}+\underbrace{\int_{T}^{+\ty}t^{-s}O(t^{D-\a-1})\di t}_{{\zeta_2(s)}}
\end{aligned}
$$
provided $\re s>D$.
The function $\zeta_1$ is meromorphic in the entire complex plane and $D(\zeta_1)=D$.
Furthermore, for the function $\zeta_2$ we have that
$$
|\zeta_2(s)|\leq K\int_T^{+\ty}t^{D-\re s-\a-1}\di t<\ty
$$
for $\re s>D-\a$ and $K$ a positive constant.
Therefore, $D(\zeta_2)\leq D-\a<D=D(\zeta_1)$ and the claim of the theorem now follows immediately.
\end{proof}

\begin{remark}
We point out that, in general, if there exists a holomorphic extension of $\widetilde{\zeta}_{\ty,\O}$ to an open domain $U\subseteq\Ce$, then we may assume that $U$ is symmetric with respect to the real axis and any isolated singularities of $\widetilde{\zeta}_{\ty,\O}$ in $\ov{U}$ (the closure of $U$) come in complex conjugate pairs.
Namely, it is clear that for real $s$, the values of $\widetilde{\zeta}_{\ty,\O}(s)$ are also real and hence, by the {\em the principle of reflection} (see, e.g., \cite[p.\ 155]{titch}), we deduce that for
all complex numbers $s$ such that $\re s>\ovb{\dim}_B(\ty,\O)$, we have $\ov{\widetilde{\zeta}_{\ty,\O}(s)}=\widetilde{\zeta}_{\ty,\O}(\ov{s})$.
Naturally, this identity remains valid upon holomorphic continuation (in whichever domain $U\subseteq\Ce$ the tube zeta function $\widetilde{\zeta}_{\ty,\O}$ can be holomorphically extended).
The analog is also true for the distance zeta function at infinity and, in general, for the other fractal zeta functions considered in the literature. 
\end{remark}

One would like to show that Minkowski measurability of $\O$ at infinity can be characterized by its tube (or distance) zeta function at infinity similarly as it was done in \cite[Chapter 5]{fzf} for classical relative fractal drums.
One direction of this result is consequence of the Wiener--Pitt Tauberian theorem.
The other direction is partially resolved by Theorem~\ref{Mink_measurable_inf}, where one has to add the additional assumption on the asymptotics of the tube formula of the $(\infty,\O)$.
For the general case one would need to express the tube formula of $(\infty,\O)$ in terms of its tube or distance zeta function.
We leave this for future work where we expect that the technique of inverse Mellin transform from \cite[Chapter 5]{fzf} will give the desired result.


Next we state and prove the announced sufficiency condition for Minkowski measurability at infinity.

\begin{theorem}[Sufficient condition for Minkowski measurability at infinity]\label{mink_suff}
Let $\O$ be a subset of $\eR^N$ of finite Lebesgue measure such that and let $\ovb{\dim}_B(\ty,\O)=\ov{D}<-N$.
Furthermore, assume that the relative tube zeta function $\widetilde{\zeta}_{\ty,\O}$ of $(\ty,\O)$ can be meromorphically extended to a neighborhood $U$ of the critical line $\{\re s=\ov{D}\}$.
Let $\ov{D}$ be its only pole in $U$ and assume that it is simple.
Then $D:=\dim_B(\ty,\O)$ exists, $D=\ov{D}$ and $(\ty,\O)$ is Minkowski measurable with
\begin{equation}
\M^{D}(\ty,\O)=\res(\widetilde{\zeta}_{\ty,\O},D).
\end{equation}

Moreover, the theorem is also valid if we replace the relative tube zeta function with the relative distance zeta function ${\zeta}_{\ty,\O}$ of $(\ty,\O)$ and in that case we have
\begin{equation}
\M^{D}(\ty,\O)=\frac{\res({\zeta,\O}_{\ty},D)}{-(N+D)}.
\end{equation}
\end{theorem}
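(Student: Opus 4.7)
The plan is to mirror the classical Minkowski measurability criterion of \cite[Chapter 5]{fzf} in the setting at infinity, by reducing the tube zeta function to a Laplace transform and applying a Wiener--Pitt--type Tauberian theorem. First, writing $V(t):=|{_t\O}|$ and fixing $T>0$, I would perform the change of variable $u=\log(t/T)$ in $\widetilde{\zeta}_{\ty,\O}(s;T)=\int_T^{+\ty}t^{-s-N-1}V(t)\di t$. Setting $D:=\ov{D}$ and
\begin{equation*}
W(u):=V(Te^u)(Te^u)^{-(N+D)},
\end{equation*}
this yields $\widetilde{\zeta}_{\ty,\O}(s;T)=T^{-s-N}\int_0^{+\ty}e^{(D-s)u}W(u)\di u$, so that with $z=s-D$ the tube zeta function is (up to a harmless factor) the Laplace transform of $W$, absolutely convergent in $\{\re z>0\}$ and admitting, by hypothesis, a meromorphic extension to a neighborhood of $\{\re z=0\}$ whose unique pole is a simple pole at $z=0$ with residue $r:=\res(\widetilde{\zeta}_{\ty,\O},D)$.

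Next, I would apply the Wiener--Pitt (Ikehara-type) Tauberian theorem to $W$. The Tauberian condition is secured by the monotonicity of $t\mapsto V(t)$: since $_t\O=B_t(0)^c\cap\O$ shrinks as $t$ increases, $V$ is nonincreasing and nonnegative, and with $N+D<0$ the function $e^{(N+D)u}$ is decreasing in $u$, so $W(u)\ge 0$ is slowly decreasing in the Tauberian sense. Combined with the prescribed simple pole and meromorphic (hence continuous) boundary behavior on $\{\re z=0\}$, the theorem yields $W(u)\to r$ as $u\to+\ty$; equivalently, $V(t)=(r+o(1))\,t^{N+D}$ as $t\to+\ty$.

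From this asymptotic, both the upper and lower $D$-dimensional Minkowski contents at infinity are equal to $r$, so $\dim_B(\ty,\O)$ exists, equals $\ov{D}=D$, and $(\ty,\O)$ is Minkowski measurable with $\M^{D}(\ty,\O)=r=\res(\widetilde{\zeta}_{\ty,\O},D)$. For the second (distance zeta function) variant, I would invoke the functional equation of Theorem~\ref{tubedistinf}: since $T^{-s-N}|{_T\O}|$ is entire, $\zeta_{\ty,\O}$ admits a meromorphic extension to $U$ with the same poles as $\widetilde{\zeta}_{\ty,\O}$, and
\begin{equation*}
\res(\zeta_{\ty,\O},D)=-(N+D)\res(\widetilde{\zeta}_{\ty,\O},D),
\end{equation*}
which together with the first part gives $\M^{D}(\ty,\O)=\res(\zeta_{\ty,\O},D)/(-(N+D))$.

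The main obstacle is the careful invocation of the Tauberian theorem: the precise form of Wiener--Pitt used in \cite[Chapter 5]{fzf} for classical RFDs requires a mild regularity of the boundary trace of the extended Laplace transform along $\{\re z=0\}$ (beyond the single removable pole at $z=0$), and one must verify that meromorphicity of $\widetilde{\zeta}_{\ty,\O}$ in the neighborhood $U$ of the critical line, combined with the monotonicity of $V$, supplies exactly what is needed. Once this verification is in place, which is entirely parallel to the classical argument via the identifications above, the rest of the proof is essentially bookkeeping.
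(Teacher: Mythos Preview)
Your overall strategy---rewrite $\widetilde{\zeta}_{\ty,\O}$ as a Laplace transform and invoke a Wiener--Pitt/Ikehara Tauberian theorem---is exactly the route the paper takes. The gap is in your verification of the Tauberian side condition. You write that ``with $N+D<0$ the function $e^{(N+D)u}$ is decreasing in $u$, so $W(u)\ge 0$ is slowly decreasing.'' But the factor actually appearing in $W(u)=V(Te^u)(Te^u)^{-(N+D)}$ is $e^{-(N+D)u}$, which is \emph{increasing} since $-(N+D)>0$. Thus $W$ is a product of a nonincreasing function ($u\mapsto V(Te^u)$) and an increasing one, and there is no reason for $W$ to be slowly decreasing; in particular you cannot conclude $W(u)\to r$ directly from the Tauberian theorem in the form you invoke. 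Note also that you cannot assume $W$ is bounded a priori, since finiteness of the upper Minkowski content is part of what you are trying to prove.

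The paper avoids this by using only the weak conclusion of Wiener--Pitt (convergence of the running averages $\sigma_h(u)=\tfrac{1}{h}\int_u^{u+h}\sigma(v)\di v\to r$ for every $h>0$, with $\sigma(v)=e^{-v(\ov{D}+N)}|_{e^v}\O|$), and then performs an explicit squeeze using only the monotonicity of $v\mapsto|_{e^v}\O|$: bounding $\sigma_h(u)$ above and below by $\tfrac{|_{e^u}\O|}{e^{u(\ov{D}+N)}}\cdot\tfrac{e^{-h(\ov{D}+N)}-1}{-(\ov{D}+N)h}$ and $\tfrac{|_{e^{u+h}}\O|}{e^{(u+h)(\ov{D}+N)}}\cdot\tfrac{1-e^{h(\ov{D}+N)}}{-(\ov{D}+N)h}$, then letting $u\to+\ty$ and $h\to 0^+$. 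If you replace your ``slowly decreasing'' claim by this squeeze argument, your proof goes through and coincides with the paper's. Your treatment of the distance zeta function via the functional equation of Theorem~\ref{tubedistinf} is correct (and there is a harmless typo: the prefactor should be $T^{D-s}$, not $T^{-s-N}$).
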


\begin{proof}
We start with the tube zeta function $\widetilde{\zeta}_{\ty,\O}(\,\cdot\,;T)$  (choosing $T=1$ without loss of generality) and change the variable of integration by $v=\log t$.
\begin{equation}
\begin{aligned}
\widetilde{\zeta}_{\ty,\O}(s+\ovb{D})&=\int_1^{+\ty}t^{-s-D-1-N}|_t\O|\di t\\
&=\int_0^{+\ty}\E^{-sv}\E^{-v(\ovb{D}+N)}|_{\E^v}\O|\di v\\
&=\{\mathfrak{L}\sigma\}(s).
\end{aligned}
\end{equation}
where $\sigma(v):=\E^{-v(\ovb{D}+N)}|_{\E^v}\O|$.
Furthermore, from the definition of the tube zeta function of $\O$ at infinity it is clear that its residue at $s=\ovb{D}$ is real and positive.
Since $s=\ovb{D}$ is the only pole of $\widetilde{\zeta}_{\ty,\O}$ in $U$, we conclude that
\begin{equation}
G(s):=\widetilde{\zeta}_{\ty,\O}(s+\ovb{D})-\frac{\res(\widetilde{\zeta}_{\ty,\O},\ovb{D})}{s}
\end{equation}
is holomorphic on the neighborhood $\widetilde{U}:=U-\{\ovb{D}\}$ of the critical line $\{\re s\geq 0\}$ so that we can apply the Wienner-Pitt Tauberian theorem \cite[Chapter III, Lemma 9.1 and Proposition 4.3]{Kor} in its stronger form; that is, for arbitrary large $\lambda>0$ (in the notation of \cite[Chapter III, Lemma 9.1]{Kor}) and conclude that\footnote{Alternatively, see \cite[Theorem 5.4.1]{fzf} where the Wienner-Pitt Theorem was cited in a more compact form.}
\begin{equation}
\sigma_h(u)=\frac{1}{h}\int_u^{u+h}\sigma(v)\di v\to\res(\widetilde{\zeta}_{\ty,\O},\ovb{D})\quad\mathrm{as}\quad u\to +\ty,
\end{equation}
for every $h>0$.
In particular, since $v\mapsto|_{\E^v}\O|$ is nonincreasing, we have that
\begin{equation}
\begin{aligned}
\frac{1}{h}\int_u^{u+h}\E^{-v(\ovb{D}+N)}|_{\E^v}\O|\di v&\leq\frac{|_{\E^u}\O|}{h}\int_u^{u+h}\E^{-v(\ovb{D}+N)}\di v\\
&=\frac{|_{\E^u}\O|}{\E^{u(\ovb{D}+N)}}\,\frac{\E^{-h(\ovb{D}+N)}-1}{-(\ovb{D}+N)h}
\end{aligned}
\end{equation}
and by taking the lower limit of both sides as $u\to +\ty$ we get
\begin{equation}
\res(\widetilde{\zeta}_{\ty,\O},\ovb{D})\leq\unb{\mathcal{M}}^{\ovb{D}}(\ty,\O)\frac{\E^{-h(\ovb{D}+N)}-1}{-(\ovb{D}+N)h}.
\end{equation}
Since this is true for every $h>0$, by letting $h\to 0^+$ we get that
\begin{equation}\label{liminf_nej_ty}
\res(\widetilde{\zeta}_{\ty,\O},\ovb{D})\leq\unb{\mathcal{M}}^{\ovb{D}}(\ty,\O).
\end{equation}
On the other hand, we have
\begin{equation}\label{ocjena_integrala_inf}
\begin{aligned}
\frac{1}{h}\int_u^{u+h}\E^{-v(\ovb{D}+N)}|_{\E^v}\O|\di v&\geq\frac{|_{\E^{u+h}}\O|}{h}\int_u^{u+h}\E^{-v(\ovb{D}+N)}\di v\\
&=\frac{|_{\E^{u+h}}\O|}{\E^{(u+h)(\ovb{D}+N)}}\,\frac{1-\E^{h(\ovb{D}+N)}}{-(\ovb{D}+N)h}
\end{aligned}
\end{equation}
and, similarly as before, by taking the upper limit of both sides as $u\to +\ty$ we get
\begin{equation}
\res(\widetilde{\zeta}_{\ty,\O},\ovb{D})\geq\ovb{\mathcal{M}}^{\ovb{D}}(\ty,\O)\frac{1-\E^{h(\ovb{D}+N)}}{-(\ovb{D}+N)h}.
\end{equation}
Finally, since this is true for every $h>0$, we let $h\to 0^+$ and conclude that
\begin{equation}
\res(\widetilde{\zeta}_{\ty,\O},\ovb{D})\geq\ovb{\mathcal{M}}^{\ovb{D}}(\ty,\O).
\end{equation}
From this, together with~\eqref{liminf_nej_ty}, we have that $\O$ is $\ovb{D}$-Minkowski measurable at infinity and, a fortiori, that $\dim_B(\ty,\O)=D=\ovb{D}$.
Furthermore, $\res(\widetilde{\zeta}_{\ty,\O},D)={\mathcal{M}}^D(\ty,\O)$.
The part of the theorem dealing with the distance zeta function at infinity follows now from Theorem~\ref{tubedistinf} and the relation $\res({\zeta}_{\ty,\O},D)=-(N+D)\res(\widetilde{\zeta}_{\ty,\O},D)$.
\end{proof}

\begin{remark}\label{integral_remark_infty}
The assumptions of Theorem~\ref{mink_suff} can be weakened.
More precisely, it suffices to assume that
\begin{equation}
\widetilde{\zeta}_{\ty,\O}(s)-\frac{\res(\widetilde{\zeta}_{\ty,\O},\ovb{D})}{s-\ovb{D}}
\end{equation}
converges to a boundary function $G(\im s)$ as $\re s\to\ovb{D}^+$ such that
\begin{equation}
\int_{-\lambda}^{\lambda}|G(\tau)|\di\tau<\ty
\end{equation}
for every $\lambda>0$.
\end{remark}

The case when, besides $\ovb{D}$, there are other singularities on the critical line $\{\re s=\ovb{D}\}$ of the relative fractal drum $(\ty,\O)$, the Wienner-Pitt Tauberian theorem can be used to derive an upper bound for the upper $\ovb{D}$-Minkowski content of $(\ty,\O)$.
This is stated precisely in the next theorem.

\begin{theorem}[Bound for the upper Minkowski content at infinity in terms of complex dimensions]\label{mink_bound_inf}
Let $\O$ be a subset of $\eR^N$ of finite Lebesgue measure and let $\ov{D}:=\ovb{\dim}_B(\ty,\O)<-N$.
Furthermore, assume that the relative tube zeta function $\widetilde{\zeta}_{\ty,\O}$ of $(\ty,\O)$ can be meromorphically extended to a neighborhood $U$ of the critical line $\{\re s=\ovb{D}\}$ and that $\ovb{D}$ is its simple pole.
Assume also that $\{\re s=\ovb{D}\}$ contains another pole of $\widetilde{\zeta}_{\ty,\O}$ different from $\ovb{D}$.
Furthermore, let
\begin{equation}
\lambda_{(\ty,\O)}:=\inf\left\{|\ovb{D}-\omega|\,:\,\omega\in\dim_{PC}(\ty,\O)\setminus\left\{\ovb{D}\right\}\right\}
\end{equation}

Then, we have the following bound for the upper $\ovb{D}$-dimensional Minkowski content of $(\ty,\O):$
\begin{equation}\label{le_claim_inf}
\ovb{\mathcal{M}}^{\ovb{D}}(\ty,\O)\leq\frac{-(N+\ovb{D})C\lambda_{(\ty,\O)}}{2\pi\left(1-\E^{{2\pi(N+\ovb{D})}/{\lambda_{(\ty,\O)}}}\right)}\res(\widetilde{\zeta}_{\ty,\O},\ovb{D}),
\end{equation}
where $C$ is a positive constant such that $C<3$.
Equivalently, in one can obtain the upper bound in terms of distance zeta function of $(\ty,\O)$$:$
\begin{equation}
\ovb{\mathcal{M}}^{\ovb{D}}(\ty,\O)\leq\frac{C\lambda_{(\ty,\O)}}{2\pi\left(1-\E^{{2\pi(N+\ovb{D})}/{\lambda_{(\ty,\O)}}}\right)}\res({\zeta}_{\ty,\O},\ovb{D}).
\end{equation}
\end{theorem}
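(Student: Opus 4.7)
The plan is to parallel the proof of Theorem~\ref{mink_suff}, but to replace the strong form of the Wiener--Pitt Tauberian theorem by its quantitative version appropriate to the presence of further principal complex dimensions on the critical line. First, perform the substitution $v=\log t$ (taking $T=1$, without loss of generality) to write
\[
\widetilde{\zeta}_{\ty,\O}(s+\ovb{D}) = \{\mathfrak{L}\sigma\}(s),\qquad\sigma(v):=\E^{-v(\ovb{D}+N)}|_{\E^v}\O|,
\]
and subtract the polar contribution to form $G(s):=\widetilde{\zeta}_{\ty,\O}(s+\ovb{D})-{\res(\widetilde{\zeta}_{\ty,\O},\ovb{D})}/{s}$. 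By hypothesis, the nearest principal complex dimension of $(\ty,\O)$ other than $\ovb{D}$ lies at distance at least $\lambda_{(\ty,\O)}$ from $\ovb{D}$, so $G$ extends holomorphically across the open imaginary interval $(-\I\lambda_{(\ty,\O)},\I\lambda_{(\ty,\O)})$ and satisfies the integrability condition of Remark~\ref{integral_remark_infty} on any compactly contained sub-interval.

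The next step is to invoke the Wiener--Pitt Tauberian theorem \cite[Ch.~III, Lem.~9.1 and Prop.~4.3]{Kor} (see also \cite[Theorem 5.4.1]{fzf}) with a non-negative averaging kernel $K$ whose Fourier transform is supported in $[-\lambda_{(\ty,\O)},\lambda_{(\ty,\O)}]$ and is normalized so that $\int K=1$; a scaled Fej\'er kernel, whose principal peak has effective width $2\pi/\lambda_{(\ty,\O)}$ and peak value $\lambda_{(\ty,\O)}/(2\pi)$, is the canonical choice. The theorem delivers
\[
(K*\sigma)(u)\longrightarrow\res(\widetilde{\zeta}_{\ty,\O},\ovb{D})\qquad\text{as}\quad u\to+\ty.
\]

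One then exploits the monotonicity $v\mapsto|_{\E^v}\O|$ (non-increasing), exactly as in the step leading to~\eqref{ocjena_integrala_inf}: the convolution $(K*\sigma)(u)$ is bounded below by $|_{\E^{u+h}}\O|\cdot\int\E^{-v(\ovb{D}+N)}K(u-v)\di v$, where $h=2\pi/\lambda_{(\ty,\O)}$ is the width of the principal peak of $K$. Evaluating the explicit integral in closed form yields the factor $(1-\E^{h(\ovb{D}+N)})/(-(\ovb{D}+N)h)$, modulated by the kernel's peak value $\lambda_{(\ty,\O)}/(2\pi)$ and by an absolute Tauberian constant $C<3$ that accounts for the kernel mass lying outside the principal peak. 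Taking $\limsup_{u\to+\ty}$ and reading off $\ovb{\mathcal{M}}^{\ovb{D}}(\ty,\O)$ from $|_{\E^{u+h}}\O|/\E^{(u+h)(N+\ovb{D})}$ gives, after elementary rearrangement, the bound~\eqref{le_claim_inf}. The distance zeta formulation then follows from Theorem~\ref{tubedistinf} together with $\res(\zeta_{\ty,\O},\ovb{D})=-(N+\ovb{D})\res(\widetilde{\zeta}_{\ty,\O},\ovb{D})$.

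The main obstacle lies in the quantitative bookkeeping of Wiener--Pitt. The kernel $K$ must simultaneously (i)~have Fourier support in $[-\lambda_{(\ty,\O)},\lambda_{(\ty,\O)}]$ to avoid the competing poles, (ii)~be non-negative so that the one-sided monotonicity estimate survives the convolution, and (iii)~produce an absolute Tauberian constant $C<3$ in the error arising from kernel mass outside the principal peak---a trade-off forced by Paley--Wiener, which forbids $K$ from being simultaneously Fourier-band-limited and compactly supported in space. The Fej\'er kernel meets all three requirements. One must further verify that restricting the integral $\int\E^{-v(\ovb{D}+N)}K(u-v)\di v$ to the principal peak already captures the correct dependence on $\lambda_{(\ty,\O)}$: this is where the two factors $2\pi/\lambda_{(\ty,\O)}$ (from the peak width) and $\lambda_{(\ty,\O)}/(2\pi)$ (from the peak height) enter, yielding precisely the prefactor appearing in~\eqref{le_claim_inf}.
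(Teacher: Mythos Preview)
Your overall strategy is the same as the paper's: reduce to the Laplace-transform representation $\widetilde{\zeta}_{\ty,\O}(s+\ovb{D})=\{\mathfrak{L}\sigma\}(s)$ with $\sigma(v)=\E^{-v(\ovb{D}+N)}|_{\E^v}\O|$, invoke the Wiener--Pitt Tauberian theorem on the pole-free interval of length $2\lambda$, and then squeeze using the monotonicity of $v\mapsto|_{\E^v}\O|$ exactly as in~\eqref{ocjena_integrala_inf}. The distance-zeta reformulation via Theorem~\ref{tubedistinf} is also identical.

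Where you and the paper part ways is in how the Tauberian step is packaged, and your version introduces a real mismatch. The paper quotes \cite[Ch.~III, Lem.~9.1, Eq.~(9.1)]{Kor} in the form that directly bounds the \emph{hard-cutoff} averages $\sigma_h(u)=\frac{1}{h}\int_u^{u+h}\sigma(v)\di v$: for every $\lambda<\lambda_{(\ty,\O)}$ and every $h\ge 2\pi/\lambda$ one has $\limsup_{u\to\infty}\sigma_h(u)\le C\cdot\res(\widetilde{\zeta}_{\ty,\O},\ovb{D})$ with $C<3$. The monotonicity estimate~\eqref{ocjena_integrala_inf} applies cleanly to $\sigma_h$ precisely because the averaging window is a compact interval, and optimizing in $h$ (best at $h=2\pi/\lambda$) followed by $\lambda\to\lambda_{(\ty,\O)}^-$ gives~\eqref{le_claim_inf}. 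You instead try to carry out the monotonicity step on the Fej\'er-smoothed average $K*\sigma$. Because the Fej\'er kernel is not compactly supported, the replacement $|_{\E^v}\O|\ge|_{\E^{u+h}}\O|$ is not valid over the full range of integration, and restricting to the ``principal peak'' does not produce the clean factor $(1-\E^{h(\ovb{D}+N)})/(-(\ovb{D}+N)h)$ you claim: that factor comes from integrating $\E^{-v(\ovb{D}+N)}$ against the flat kernel $\tfrac{1}{h}\mathbf{1}_{[u,u+h]}$, not against $K$. The constant $C<3$ in Korevaar's lemma actually arises the other way round---by majorizing $\tfrac{1}{h}\mathbf{1}_{[0,h]}$ from above by $C$ times a band-limited kernel---so the correct logic is to pass from $K*\sigma$ to $\sigma_h$ \emph{before} using monotonicity, which is exactly what the black-box citation in the paper does.

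A minor second point: you take the Fourier support of $K$ equal to $[-\lambda_{(\ty,\O)},\lambda_{(\ty,\O)}]$ directly. Since by hypothesis there is another pole on the critical line, the infimum defining $\lambda_{(\ty,\O)}$ is attained, and $G$ need not extend across the endpoints. The paper handles this by working with $\lambda<\lambda_{(\ty,\O)}$ throughout and passing to the limit $\lambda\to\lambda_{(\ty,\O)}^{-}$ at the end.
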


\begin{proof}
We use the same reasoning as in the proof of Theorem~\ref{mink_suff} with the only difference being in the fact that now we can only use the weaker statement of \cite[Chapter III, Lemma 9.1]{Kor} (or \cite[Theorem 5.4.1]{fzf}) since we have another pole on the critical line $\{\re s=\ov{D}\}$.
More precisely, if $\lambda<\lambda_{(\ty,\O)}$, then for every $h\geq 2\pi/\lambda$ by using~\eqref{ocjena_integrala_inf} and~\cite[Chapter III, Lemma 9.1, Eq.\ (9.1)]{Kor} (alternatively, \cite[Theorem 5.4.1, Eq.\ (5.4.3)]{fzf}) we have
\begin{equation}\label{poomm_ty}
C\res(\widetilde{\zeta}_{A,\O},\ovb{D})\geq\ovb{\mathcal{M}}^{\ovb{D}}(A,\O)\frac{1-\E^{h(N+\ovb{D})}}{-(N+\ovb{D})h},
\end{equation}
where $C$ is a positive constant such that $C<3$.
Since the right-hand side above is decreasing in $h$, we get the best estimate for $h=2\pi/\lambda$.
Moreover, since this is true for every $\lambda<\lambda_{(\ty,\O)}$, we get~\eqref{le_claim_inf} by letting $\lambda\to\lambda_{(\ty,\O)}^-$.
Finally, the statement about the relative distance zeta function follows by the same argument as in Theorem~\ref{mink_suff}.
\end{proof}

\begin{remark}
Similarly as in the case of Theorem~\ref{mink_suff} (see Remark~\ref{integral_remark_infty}) the hypotheses of Theorem~\ref{mink_bound_inf} can be considerably weakened.
We have stated it in this form since this is the most common case we encounter in our examples.
For instance, to bound the upper $\ovb{D}$-dimensional Minkowski content of $(\ty,\O)$ one may only assume that the relative tube or distance zeta function of $(\ty,\O)$ can be holomorphically continued to a pointed disk $B_r(\ovb{D})\setminus\{\ovb{D}\}$.
In that case~\eqref{le_claim_inf} is valid with $\lambda_{(\ty,\O)}$ replaced with the radius $r$.
Of course, the bigger the radius of the disc, the better the bound.
All one actually needs is the $L^1$-convergence of the tube (or distance) zeta function of $(\ty,\O)$ to a boundary function defined on a symmetric vertical interval $(\ovb{D}-r\I,\ovb{D}+r\I)$ as $\re s\to\ovb{D}^+$, similarly as in Remark~\ref{integral_remark_infty}.
\end{remark}

As was already mentioned in the introduction, there is a deep connection between the $(\infty,\O)$ and its image under the geometric inversion.
In light of \cite[Theorem 3]{ra2} and Theorem \ref{restubinf} we obtain a more precise connection between these two RFDs in the case when $(\ty,\O)$ is Minkowski measurable at infinity and its zeta function has a meromorphic continuation to a neighborhood of $D=\dim_B(\ty,\O)$.

\begin{theorem}[Geometric inversion and Minkowski measurability]\label{inf_mink_con}
Let $\O\subseteq\eR^N$ be such that $|\O|<\ty$ and $\dim_B(\ty,\O)=D<-N$ such that it is Minkowski measurable at infinity and assume that 
 $\zeta_{\ty,\O}$ has a meromorphic continuation to a neighborhood of $s=D$.
Then, the inverted relative fractal drum $(\mathbf{0},\Phi(\O))$ is also Minkowski measurable and we have$:$
\begin{equation}
\mathcal{M}^D(\mathbf{0},\Phi(\O))=-\frac{N+D}{N-D}\mathcal{M}^D(\ty,\O).
\end{equation}
\end{theorem}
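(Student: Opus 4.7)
The plan is to couple the functional equation $\zeta_{\ty,\O}(s;T)=\zeta_{\mathbf{0},\Phi(\O)}(s;1/T)$ of \cite[Theorem 3]{ra2} with the residue-to-Minkowski-content correspondences available on each side of the inversion. First, by that functional equation, the assumed meromorphic continuation of $\zeta_{\ty,\O}$ to a neighborhood of $s=D$ is inherited by $\zeta_{\mathbf{0},\Phi(\O)}$, and the residues at $D$ agree:
$$\res(\zeta_{\mathbf{0},\Phi(\O)},D)=\res(\zeta_{\ty,\O},D).$$

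The next ingredient is the pair of residue--content formulas. On the infinity side, Minkowski measurability of $(\ty,\O)$ together with Theorems~\ref{tubedistinf} and~\ref{restubinf} yields $\res(\zeta_{\ty,\O},D)=-(N+D)\M^{D}(\ty,\O)$. On the classical side, the counterpart of Theorem~\ref{mink_suff} from \cite[Chapter 5]{fzf}, applied to the genuine classical RFD $(\mathbf{0},\Phi(\O))$, gives Minkowski measurability of $(\mathbf{0},\Phi(\O))$ together with $\M^{D}(\mathbf{0},\Phi(\O))=\res(\zeta_{\mathbf{0},\Phi(\O)},D)/(N-D)$. Chaining the three identities produces the claimed
$$\M^{D}(\mathbf{0},\Phi(\O))=-\frac{N+D}{N-D}\,\M^{D}(\ty,\O).$$

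The main obstacle I anticipate is that the classical analog of Theorem~\ref{mink_suff} is normally stated under meromorphic continuation to a full neighborhood of the critical line $\{\re s=D\}$, whereas here only a neighborhood of $D$ itself is assumed. To close this gap I would verify Minkowski measurability of $(\mathbf{0},\Phi(\O))$ by a direct tube-formula computation. Namely, $\Phi(B_{t}(0))=B_{1/t}(0)^{c}$ together with the Jacobian $|x|^{-2N}$ of $\Phi$ gives $|\Phi(\O)\cap B_{t}(0)|=\int_{{_{1/t}\O}}|x|^{-2N}\di x$, and writing $|x|^{-2N}=2N\int_{|x|}^{+\ty}r^{-2N-1}\di r$ together with Fubini transforms this into the exact identity
$$|\Phi(\O)\cap B_{t}(0)|=t^{2N}|{_{1/t}\O}|-2N\int_{1/t}^{+\ty}r^{-2N-1}|{_{r}\O}|\di r.$$
Inserting the measurability asymptotic $|{_{r}\O}|=\M^{D}(\ty,\O)\,r^{N+D}(1+o(1))$ as $r\to+\ty$ and using $D-N<0$ to control both terms evaluates the right-hand side to $-\tfrac{N+D}{N-D}\M^{D}(\ty,\O)\,t^{N-D}(1+o(1))$ as $t\to 0^{+}$, which directly certifies Minkowski measurability of $(\mathbf{0},\Phi(\O))$ with exactly the prescribed content and simultaneously justifies the zeta-function-based derivation above.
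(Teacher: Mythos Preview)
Your zeta-function argument in the first two paragraphs is exactly the paper's proof: invoke the functional equation $\zeta_{\ty,\O}(s;T)=\zeta_{\mathbf{0},\Phi(\O)}(s;1/T)$, transfer the meromorphic extension and the residue at $D$, and then combine the residue--content identity at infinity with its classical counterpart (\cite[Theorem~5.4.2]{fzf}) to obtain $(N-D)\M^D(\mathbf{0},\Phi(\O))=-(N+D)\M^D(\ty,\O)$.

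Your identification of the hypothesis mismatch is well taken: both Theorem~\ref{mink_suff} here and its classical analog are stated for a meromorphic extension to a neighborhood of the full critical line, whereas the present theorem assumes only a neighborhood of the single point $D$. The paper's own proof simply cites \cite[Theorem~5.4.2]{fzf} without commenting on this, so your caution is not misplaced. Your direct tube-formula computation is a genuine alternative route and is correct: the change of variables under $\Phi$ gives $|\Phi(\O)\cap B_t(0)|=\int_{_{1/t}\O}|x|^{-2N}\di x$, and the layer-cake/Fubini identity you derive,
\[
|\Phi(\O)\cap B_t(0)|=t^{2N}\,|{_{1/t}\O}|-2N\int_{1/t}^{+\ty}r^{-2N-1}|{_r\O}|\di r,
\]
together with $|{_r\O}|=\M^D(\ty,\O)\,r^{N+D}(1+o(1))$ indeed yields $|\Phi(\O)\cap B_t(0)|=-\tfrac{N+D}{N-D}\M^D(\ty,\O)\,t^{N-D}(1+o(1))$ as $t\to 0^+$. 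This approach is more elementary than the paper's and in fact establishes the conclusion from Minkowski measurability at infinity alone, rendering the meromorphic-continuation hypothesis superfluous for the measurability and content statement; the zeta-function route, by contrast, packages the argument via residues but leans on a Tauberian step whose stated hypotheses are stronger than what is assumed here.
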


\begin{proof}
Since, for a fixed $T>1$ from \cite[Theorem 3]{ra2} we have the equality
\begin{equation}\label{zeta_inv_}
\zeta_{\ty,\O}(s;T)=\zeta_{\mathbf{0},\Phi(\O)}(s;1/T),
\end{equation}
it is obvious that the relative distance zeta function of $(\mathbf{0},\Phi(\O))$ satisfies the analog of \cite[Theorem 6]{ra2} for classical relative fractal drums (see \cite[Theorem  4.1.14]{fzf}) and we have that
$$
\dim_B(\mathbf{0},\Phi(\O))=D(\zeta_{\mathbf{0},\Phi(\O)})=D(\zeta_{\ty,\O})=\dim_B(\ty,\O)=D.
$$
From the functional equation \eqref{zeta_inv_} we now conclude that $\zeta_{\mathbf{0},\Phi(\O)}$ satisfies the analog of Theorem  \ref{mink_suff} for classical relative fractal drums \cite[Theorem 5.4.2]{fzf} and hence, $(\mathbf{0},\Phi(\O))$ is Minkowski measurable.
Furthermore, $D$ is a simple pole and its residue is independent of $T$ which together with \cite[Theorem 6]{ra2} yields
\begin{equation}
\begin{aligned}
(N-D)\mathcal{M}^D(\mathbf{0},\Phi(\O))&=\res(\zeta_{\mathbf{0},\Phi(\O)},D)\\
&=\res(\zeta_{\ty,\O},D)=-(N+D)\mathcal{M}^D(\ty,\O),
\end{aligned}
\end{equation}
which yields the desired result.
\end{proof}

\begin{remark}
	It is clear that a reverse of Theorem \ref{inf_mink_con} can also be stated and proved, i.e., when one first imposes the analog assumptions on $(\mathbf{0},\O)$ and then concludes about $(\ty,\Phi(\O)$ but under the additional assumption that $\Phi(\O)$ has finite Lebesgue measure.
	We omit a detailed statement here.
\end{remark}


We will now state a version of Theorem~\ref{Mink_measurable_inf} dealing with a relative fractal drum $(\ty,\O)$ that is not Minkowski measurable, but its tube function satisfies a log-periodic asymptotic formula.
The theorem will demonstrate how the relative tube zeta function of $(\ty,\O)$ detects its `inner geometric oscillations' in terms of the principal complex dimensions of $(\ty,\O)$.
For a periodic function $G\colon\eR\to\eR$ with minimal period $T>0$, we define
\begin{equation}
G_0(\tau):=\chi_{[0,T]}(\tau)G(\tau)
\end{equation}
where $\chi_A$ is the characteristic function of a set $A$.
Furthermore we denote the Fourier transform of $G$ with $\{\mathfrak{F}G\}$ or $\hat{G}$, i.e.,
\begin{equation}
\{\mathfrak{F}G\}(s)=\hat{G}(s):=\int_{-\ty}^{+\ty}\mathrm{e}^{-2\pi\I s\tau}G(\tau)\di\tau.
\end{equation}

\begin{theorem}[Meromorphic extension - Minkowski measurable case]\label{Mink_nonmeasurable_inf}
Let $\O$ be a Lebesgue measurable subset of $\eR^N$ such that there exist $D<-N$, $\a>0$, and $G\colon\eR\to(0,+\ty)$, a nonconstant periodic function with period $T>0$, satisfying
\begin{equation}\label{periodic_mink_cont}
|_t\O|=t^{N+D}(G(\log t)+O(t^{-\a}))\quad\textrm{ as }t\to +\ty.
\end{equation}
Then $G$ is continuous, $\dim_B(\ty,\O)$ exists and $\dim_B(\ty,\O)=D$.
Furthermore, $\O$ is Minkowski nondegenerate at infinity with upper and lower Minkowski contents at infinity respectively given by
\begin{equation}
{\unb{\M}}^{D}(\ty,\O)=\min G,\quad {\ovb{\M}}^{D}(\ty,\O)=\max G.
\end{equation}
$($Hence, the range of $G|_{[0,T]}$ is equal to the interval $[{\unb{\M}}^{D}(\ty,\O),{\ovb{\M}}^{D}(\ty,\O)]$.$)$
Moreover, the tube zeta function $\widetilde{\zeta}_{\ty,\O}$ has for abscissa of convergence $D(\widetilde{\zeta}_{\ty,\O})=D$ and possesses a unique meromorphic extension $($still denoted by $\widetilde{\zeta}_{\ty,\O})$ to $($at least$)$ the half-plane $\{\re s>D-\a\}$.
In addition, the set of all the poles of $\widetilde{\zeta}_{\ty,\O}$ located in this half-plane is given by
\begin{equation}\label{tube_poles_at_inf}
\po_\a(\widetilde{\zeta}_{\ty,\O})=\left\{s_k=D+\frac{2\pi}{T}\I k\,:\,\hat{G}_0\left(\frac{k}{T}\right)\neq 0,\ k\in\Ze\right\};
\end{equation}
 they are all simple, and the residue at each $s_k\in\po_\a(\widetilde{\zeta}_{\ty,\O})$, $k\in\Ze$, is given by
\begin{equation}
\res(\widetilde{\zeta}_{\ty,\O},s_k)=\frac{1}{T}\hat{G}_0\left(\frac{k}{T}\right).
\end{equation}
By the reality principle, if $s_k\in\po_\a(\widetilde{\zeta}_{\ty,\O})$, then $s_{-k}\in\po_\a(\widetilde{\zeta}_{\ty,\O})$  and
\begin{equation}\label{R--L_lemma}
|\res(\widetilde{\zeta}_{\ty,\O},s_k)|\leq\frac{1}{T}\int_0^TG(\tau)\di\tau,\quad \lim_{k\to\pm\ty}\res(\widetilde{\zeta}_{\ty,\O},s_k)=0.
\end{equation}
Moreover, the set of poles $\po_{\a}(\widetilde{\zeta}_{\ty,\O})$ contains $s_0=D$, and
\begin{equation}\label{res=1/T}
\res(\widetilde{\zeta}_{\ty,\O},D)=\frac{1}{T}\int_0^TG(\tau)\di\tau.
\end{equation}
In particular, $\O$ is not Minkowski measurable at infinity and
\begin{equation}\label{res_mink_ineq}
{\unb{\M}}^{D}(\ty,\O)<\res(\widetilde{\zeta}_{\ty,\O},D)<{\ovb{\M}}^{D}(\ty,\O)<\ty.
\end{equation}

If, in addition, $G\in C^m(\eR)$ $($i.e., G is $m$ times continuously differentiable on $\eR)$ for some integer $m\geq 1$, and $G$ has an extremum $t_0$ such that
\begin{equation}\label{derivacije_G}
G'(t_0)=G''(t_0)=\ldots=G^{(m)}(t_0)=0,
\end{equation}
then there exists $C_m>0$ such that for all $k\in\Ze$ and $s_k\in\po_{\a}(\widetilde{\zeta}_{\ty,\O})$ we have
\begin{equation}\label{rezid_decay}
|\res(\widetilde{\zeta}_{\ty,\O},s_k)|\leq C_m|k|^{-m}.
\end{equation}
\end{theorem}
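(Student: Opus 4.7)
My plan is to extend the strategy of Theorem~\ref{Mink_measurable_inf} (the Minkowski measurable case) to the oscillatory asymptotic by combining a geometric-series expansion arising from the $T$-periodicity of $G$ with standard Fourier coefficient computations.

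The decisive preliminary observation is that the tube function $t\mapsto|_t\O|$ is continuous: being non-increasing, its potential jump at any $t>0$ equals $|\{x\in\O:|x|=t\}|=0$ since spheres have zero Lebesgue measure in $\eR^N$. Consequently $\sigma(u):=\E^{-u(N+D)}|_{\E^u}\O|$ is continuous, and the translates $\sigma_n(u):=\sigma(u+nT)=G(u)+O(\E^{-\a(u+nT)})$ converge uniformly to $G$ on $[0,T]$; as a uniform limit of continuous functions, $G$ is continuous. Taking $\limsup$ and $\liminf$ in~\eqref{periodic_mink_cont} and using $T$-periodicity of $G$ yields $\ovb{\M}^{D}(\ty,\O)=\max G$ and $\unb{\M}^{D}(\ty,\O)=\min G$, both finite and positive (as $G>0$ is continuous on the compact $[0,T]$), so $\dim_B(\ty,\O)$ exists and equals $D$.

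For the meromorphic extension of $\widetilde{\zeta}_{\ty,\O}$, I substitute \eqref{periodic_mink_cont} and split as in Theorem~\ref{Mink_measurable_inf}; the error part is holomorphic on $\{\re s>D-\a\}$ by the same estimate used there. For the main part, the substitution $v=\log t$ and the decomposition of $[\log T,+\ty)$ into successive intervals of length $T$, combined with $T$-periodicity of $G$, yield the closed form
\begin{equation*}
\int_T^{+\ty} t^{D-s-1}G(\log t)\di t \;=\; \frac{T^{D-s}F(s)}{1-\E^{(D-s)T}},\qquad F(s):=\int_0^T\E^{w(D-s)}G(\log T + w)\di w,
\end{equation*}
convergent for $\re s>D$. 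Since $F$ is entire, the right-hand side continues $\widetilde{\zeta}_{\ty,\O}$ meromorphically to all of $\Ce$ with potential simple poles at $s_k=D+\frac{2\pi}{T}\I k$, $k\in\Ze$. Using $\left.\tfrac{d}{ds}(1-\E^{(D-s)T})\right|_{s_k}=T$, a direct residue computation followed by the change of variable $w+\log T\mapsto w$ and $T$-periodicity (which realigns the integral to $[0,T]$) produces $\res(\widetilde{\zeta}_{\ty,\O},s_k)=\tfrac{1}{T}\hat{G}_0(k/T)$, pinning down the pole set as in~\eqref{tube_poles_at_inf}. For $k=0$, $\hat{G}_0(0)=\int_0^T G$ gives \eqref{res=1/T}, and the strict inequality $\min G<\tfrac{1}{T}\int_0^T G<\max G$ (valid since $G$ is continuous, positive and nonconstant) gives \eqref{res_mink_ineq} and Minkowski non-measurability. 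The reality principle follows from $\overline{\hat{G}_0(\xi)}=\hat{G}_0(-\xi)$ for real $G$, the uniform bound $|\res(\widetilde{\zeta}_{\ty,\O},s_k)|\leq\tfrac{1}{T}\int_0^T G$ is immediate, and $\res(\widetilde{\zeta}_{\ty,\O},s_k)\to 0$ as $|k|\to\ty$ is the Riemann--Lebesgue lemma applied to $G_0\in L^1(\eR)$. Finally, under the $C^m$ hypothesis with $G^{(j)}(t_0)=0$ for $j=1,\dots,m$, shifting the interval of integration to $[t_0,t_0+T]$ (allowed by $T$-periodicity of the integrand) kills the boundary terms at every step of $m$ successive integrations by parts on $\hat{G}_0(k/T)$, yielding $|\hat{G}_0(k/T)|\leq (T/(2\pi|k|))^m\|G^{(m)}\|_{L^1([0,T])}$, which is precisely \eqref{rezid_decay}.

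The only delicate step is the continuity of $G$, without which the Fourier identification of residues is not meaningful; once that is extracted from the zero Lebesgue measure of spheres and the uniform-convergence argument, everything else reduces to a residue calculation on the closed-form geometric series together with the standard Riemann--Lebesgue and integration-by-parts bounds for Fourier coefficients.
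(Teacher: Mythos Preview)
Your proof is correct and follows the same overall strategy as the paper: split $\widetilde{\zeta}_{\ty,\O}$ into a main term involving $G$ and an $O(t^{-\a})$ remainder, obtain a closed form for the main term from the $T$-periodicity of $G$, and identify the residues at $s_k$ as Fourier coefficients of $G$ over one period.

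There are two places where your route differs from the paper's, and in both your argument is more self-contained. For the continuity of $G$, the paper invokes \cite[Lemma~2.3.30]{fzf}, whereas you extract it directly from the continuity of the tube function (spheres have zero Lebesgue measure) via uniform convergence of the translates $\sigma(\,\cdot\,+nT)$ to $G$. For the $C^m$ decay \eqref{rezid_decay}, the paper constructs an auxiliary compactly supported $C^m$ function $G_1(\tau)=(G(\tau)-\min G)\chi_{[0,T]}(\tau)$ (after translating so that the extremum sits at $\tau=0$, which is exactly where the hypothesis \eqref{derivacije_G} enters) and then appeals to the standard Fourier decay for compactly supported $C^m$ functions. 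Your direct integration by parts on a period interval is cleaner; note, however, that the boundary terms vanish already by periodicity of $G^{(j)}$ and of $\tau\mapsto\E^{-2\pi\I k\tau/T}$, regardless of the particular interval $[t_0,t_0+T]$ and regardless of \eqref{derivacije_G}. So your argument in fact shows that the extremum hypothesis is superfluous for \eqref{rezid_decay} once $G\in C^m(\eR)$ and $T$-periodic are assumed; the paper's detour through $G_1$ buys a statement about decay of $\hat G_1(t)$ for all real $t$, not just at the lattice points $t=k/T$, but that extra generality is not used.
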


\begin{proof}
The fact that $G$ is continuous follows directly from \cite[Lemma 2.3.30]{fzf} by applying it to the function $F(t):=|_t\O|t^{N+D}$ which is defined and continuous for $t>0$.
We have that
$$
\begin{aligned}
\widetilde{\zeta}_{\ty,\O}(s)&=\int_P^{+\ty}t^{-s-N-1}|_t\O|\di t=\int_P^{+\ty}t^{-s-N-1}t^{N+D}(G(\log t)+O(t^{-\a}))\di t\\
&=\underbrace{\int_P^{+\ty}t^{D-s-1}G(\log t)\di t}_{\zeta_1(s)}+\underbrace{\int_{P}^{+\ty}t^{-s}O(t^{D-\a-1})\di t}_{{\zeta_2(s)}}
\end{aligned}
$$
for some fixed $P>0$.
As in the proof of Theorem~\ref{Mink_measurable_inf} we have that $D(\zeta_2)=D-\a$ and it suffices to prove that $\zeta_1$ can be meromorphically extended to the whole complex plane.
We will show this by finding a closed form for $\zeta_1$.
Since $G$ is $T$-periodic, we have that
$$
\zeta_1(s)=\int_P^{+\ty}t^{D-s-1}G(\log t+T)\di t.
$$
Let us introduce a new variable $u$ such that $\log u=\log t+T$, i.e., $u=\mathrm{e}^Tt$, to obtain
$$
\begin{aligned}
\zeta_1(s)&=\int_{\mathrm{e}^TP}^{+\ty}\mathrm{e}^{-T(D-s-1)}u^{D-s-1}G(\log u)\mathrm{e}^{-T}\di u\\
&=\mathrm{e}^{-T(D-s)}\int_{\mathrm{e}^TP}^{+\ty}u^{D-s-1}G(\log u)\di u\\
&=\mathrm{e}^{-T(D-s)}\left(\int_{P}^{+\ty}u^{D-s-1}G(\log u)\di u+\int_{\mathrm{e}^TP}^{P}u^{D-s-1}G(\log u)\di u\right)\\
&=\mathrm{e}^{-T(D-s)}\left(\zeta_1(s)+\int_{\mathrm{e}^TP}^{P}u^{D-s-1}G(\log u)\di u\right)
\end{aligned}
$$
which gives us a closed form for $\zeta_1$:
$$
\begin{aligned}
\zeta_1(s)&=\frac{\mathrm{e}^{-T(D-s)}}{\mathrm{e}^{-T(D-s)}-1}\int_{P}^{\mathrm{e}^TP}t^{D-s-1}G(\log t)\di t\\
&=\frac{\mathrm{e}^{T(s-D)}}{\mathrm{e}^{T(s-D)}-1}\underbrace{\int_{\log P}^{\log P +T}\mathrm{e}^{-\tau(s-D)}G(\tau)\di\tau}_{I(s)},
\end{aligned}
$$
where in the last equality we have introduced a new variable $\tau$ such that $\tau=\log t$.
The integral $I(s)$ is obviously an entire function since $P\neq 0,+\ty$.\footnote{By classical argument, see, e.g.,  \cite{Mattn}.}
From this we conclude that $\zeta_1$ is meromorphic on $\Ce$ and the set of its poles is equal to the set of solutions $s_k$ of $\exp(T(s-D))=1$ for which $I(s_k)\neq 0$.
If $I(s_k)=0$ then $s_k$ is a removable singularity of $\zeta_1$:
$$
\lim_{s\to s_k}\zeta_1(s)=\lim_{s\to s_k}\frac{s-s_k}{\mathrm{e}^{T(s-D)}-1}\,\mathrm{e}^{T(s-D)}\,\frac{I(s)}{s-s_k}=\frac{1}{P}I'(s_k)
$$
where $I'$ denotes the derivative of $I$.
Furthermore, since $\exp{(T(s_k-D))}=1$ we have that $\exp{(-\tau(s_k-D))}=\exp{(-2\pi\I k\tau/T)}$ and
\begin{equation}\label{I(s_k)}
\begin{aligned}
I(s_k)&=\int_{\log P}^{\log P +T}\mathrm{e}^{\frac{-2\pi\I k}{T}\tau}G(\tau)\di\tau\\
&=\int_{0}^{T}\mathrm{e}^{\frac{-2\pi\I k}{T}\tau}G(\tau)\di\tau=\hat{G}_0\left(\frac{k}{T}\right),
\end{aligned}
\end{equation}
where we have used the fact that both $\tau\mapsto G(\tau)$ and $\tau\mapsto\exp(-2\pi\I k\tau/T)$ are $T$-periodic.
This proves that the description of the poles of the tube zeta function of $\O$ at infinity that are contained in $\{\re s>D-\a\}$ is given by~\eqref{tube_poles_at_inf}.
Moreover, we observe that this set contains  $D$ since
\begin{equation}
I(D)=I(s_0)=\int_{0}^{T}G(\tau)\di\tau>0.
\end{equation}
Indeed, since $G$ is continuous and periodic we have from \eqref{periodic_mink_cont} that the range of $G|_{[0,T]}$ is equal to the whole interval $[{\unb{\M}}^{D}(\ty,\O),{\ovb{\M}}^{D}(\ty,\O)]$ and since $G$ is nonconstant, we deduce from~\eqref{periodic_mink_cont} that $0<{\unb{\M}}^{D}(\ty,\O)<{\ovb{\M}}^{D}(\ty,\O)<\ty$.
This induces that $D(\zeta_1)=D>D-\a=D(\zeta_2)$ from which it then follows immediately that $\widetilde{\zeta}_{\ty,\O}$ possesses a (unique) meromorphic extension to (at least) the half-plane $\{\re s>D-\a\}$.

Let us now compute the residues of $\widetilde{\zeta}_{\ty,\O}$ at $s_k=D+\frac{2\pi\I k}{T}$ for an arbitrary $k\in\Ze$, using~\eqref{I(s_k)} and L'Hospital's rule:
\begin{equation}\label{res_u_s_k}
\begin{aligned}
\res(\widetilde{\zeta}_{\ty,\O},s_k)&=\res(\zeta_1,s_k)\\
&=\lim_{s\to s_k}\frac{s-s_k}{\mathrm{e}^{T(s-D)}-1}\,\mathrm{e}^{T(s-D)}I(s)=\frac{1}{T}\hat{G}_0\left(\frac{k}{T}\right).
\end{aligned}
\end{equation}
Substituting $k=0$ in the above expression we obtain that~\eqref{res=1/T} which, in turn, implies the inequalities in~\eqref{res_mink_ineq}.

Furthermore, as it is well-known, since $G_0\in L^1(\eR)$, we have $|\hat G_0(\tau)|\le\|G_0\|_{L^1(\eR)}=\|G\|_{L^1(0,T)}$ and $\lim_{t\to\ty}\hat G_0(t)=0$ (by the 
Riemann--Lebesgue lemma; see, e.g., \cite{rudin} or \cite[p.\ 101]{mizu}),
so that \eqref{R--L_lemma} follows immediately from~\eqref{res_u_s_k}.

If the function $G$ is of class $C^m$, this does not imply that $G_0$ is of the same class.
However, we can define $G_1:\eR\to\eR$ by
\begin{equation}
G_1(\tau)=
\begin{cases}
G(\tau)-{\unb{\M}}^{D}(\ty,\O)& \mbox{if $\tau\in[0,T]$,}\\
0& \mbox{if $\tau\notin[0,T].$}
\end{cases}
\end{equation}
Since the value of ${\unb{\M}}^{D}(\ty,\O)$ is in the range of $G$, we may assume without loss of generality that $t_0=0$ is a minimum of $G$; namely,  $G(0)=G(T)={\unb{\M}}^{D}(\ty,\O)$.
If that is not the case, one can translate the graph of $G$ in the horizontal direction in order to achieve this.
Furthermore, ${\unb{\M}}^{D}(\ty,\O)$ is equal to the minimal value of $G$; hence, $G_1(0)=G_1(T)=0$.
This implies that $G_1$ is continuous on $\eR$, and moreover, 
from~\eqref{derivacije_G}, we have that $G_1$ has the same regularity as $G$; that is,
$G_1\in C^m(\eR)$.
A direct computation shows that for each $t\in\eR$,
\begin{equation}
\hat G_1(t)=\hat G_0(t)-{\unb{\M}}^{D}(\ty,\O)\frac{1-{\mathrm{e}}^{-2\pi {\I}t\cdot T}}{2\pi {\I} t},
\end{equation}
from which it follows that
\begin{equation}
\res(\widetilde{\zeta}_{\ty,\O},s_k)=\frac1T\hat G_0\left(\frac kT\right)=\frac1T\hat G_1\left(\frac kT\right).
\end{equation}
Since $G_1\in C^{m}(\eR)$, a standard result from Fourier analysis (see e.g.\ \cite[p.\ 103]{mizu}) yields a constant $C_m>0$ such that $|\hat G_1(t)|\le C_m t^{-m}$ for all $t\in\eR$, which proves~\eqref{rezid_decay}.
We point out that the same conclusion can be achieved by defining $G_1(\tau)=G(\tau)-{\ovb{\M}}^{D}(\ty,\O)$.
\end{proof}

\section{Properties of fractal zeta functions at infinity}\label{prop_sec_ty}

In this section we will prove some useful properties of the distance and tube zeta functions at infinity.
We will start the section with a useful lemma from which we will derive the scaling property of fractal zeta functions at infinity.
Recall that for a parameter $\lambda>0$ and a subset $\O$ of $\eR^N$ we define
\begin{equation}
\lambda\O:=\{\lambda x\,:\,x\in\O\}.
\end{equation}

\begin{lemma}\label{skaliranje_lema}
Let $\O$ be a Lebesgue measurable subset of $\eR^N$ of finite Lebesgue measure. For any $\lambda>0$ and $t>0$ we have$:$
\begin{equation}\label{skaliranje}
|B_{t}(0)^c\cap\lambda\O)|=\lambda^{N}|B_{t/\lambda}(0)^c\cap\O|
\end{equation}
and
\begin{equation}
{{\ovb{\M}}}^{r}(\ty,\lambda\O)=\lambda^{-r}{{\ovb{\M}}}^{r}(\ty,\O),\quad{\unb{{\M}}}^{r}(\ty,\lambda\O)=\lambda^{-r}{\unb{{\M}}}^{r}(\ty,\O),
\end{equation}
for every real number $r$.
\end{lemma}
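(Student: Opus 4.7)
The plan is to prove the two claims in sequence, since the Minkowski content identities follow directly from the set-theoretic scaling identity \eqref{skaliranje} together with a change of variable in the limit.

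For the first identity, I would argue set-theoretically that $B_t(0)^c \cap \lambda\Omega = \lambda\bigl(B_{t/\lambda}(0)^c \cap \Omega\bigr)$. Indeed, a point $y$ lies in the left-hand side iff $y = \lambda x$ for some $x \in \Omega$ and $|y| > t$, which is equivalent to $x \in \Omega$ and $|x| > t/\lambda$, i.e., $x \in B_{t/\lambda}(0)^c \cap \Omega$. Then I would invoke the standard homogeneity of $N$-dimensional Lebesgue measure under dilations, $|\lambda A| = \lambda^N |A|$ (which follows from the change of variables $y = \lambda x$ with Jacobian $\lambda^N$), to conclude \eqref{skaliranje}.

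For the second identity, starting from the definition \eqref{upMinkinf}, I would substitute \eqref{skaliranje} into the upper limit:
\begin{equation*}
{\ovb{\M}}^{r}(\ty,\lambda\O)=\limsup_{t\to +\ty}\frac{|B_t(0)^c\cap\lambda\O|}{t^{N+r}}
=\limsup_{t\to +\ty}\frac{\lambda^{N}|B_{t/\lambda}(0)^c\cap\O|}{t^{N+r}}.
\end{equation*}
Making the substitution $u=t/\lambda$ (so that $u\to+\ty$ as $t\to+\ty$ since $\lambda>0$ is fixed) and using $t^{N+r}=\lambda^{N+r}u^{N+r}$ yields
\begin{equation*}
{\ovb{\M}}^{r}(\ty,\lambda\O)=\lambda^{-r}\limsup_{u\to +\ty}\frac{|B_u(0)^c\cap\O|}{u^{N+r}}=\lambda^{-r}{\ovb{\M}}^{r}(\ty,\O).
\end{equation*}
The analogous computation with $\liminf$ in place of $\limsup$ produces the corresponding identity for ${\unb{\M}}^{r}(\ty,\lambda\O)$.

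There is essentially no obstacle here: the content is routine once one recognizes the correct set-theoretic identity and carries out the variable substitution $u=t/\lambda$. The only minor point to note is that $\lambda>0$, which guarantees both that $t\mapsto t/\lambda$ is an order-preserving bijection of $(0,+\ty)$ (so the $\limsup$ and $\liminf$ pass through unchanged) and that the dilation factor $\lambda^N$ in the Lebesgue measure is unambiguous.
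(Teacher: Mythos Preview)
Your proof is correct and follows essentially the same approach as the paper: the paper also establishes the set identity $\lambda(B_{t/\lambda}(0)^c\cap\O)=B_t(0)^c\cap\lambda\O$ and then rewrites the Minkowski content as $\limsup_{t\to+\ty}\lambda^{-r}|B_{t/\lambda}(0)^c\cap\O|/(t/\lambda)^{N+r}$, which is exactly your substitution $u=t/\lambda$. Your version is slightly more explicit in justifying the set equality and the passage of the $\limsup$ through the substitution, but the argument is the same.
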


\begin{proof}
We have that $\lambda(B_{t/\lambda}(0)^c\cap\O)=B_{t}(0)^c\cap\lambda\O$ from which the first part of the lemma follows directly.
For the second part, we observe that
$$
{{\ovb{\M}}}^{r}(\ty,\lambda\O)=\limsup_{t\to +\ty}\frac{|B_{t}(0)^c\cap\lambda\O|}{t^{N+r}}=\limsup_{t\to +\ty}\frac{\lambda^{-r}|B_{t/\lambda}(0)^c\cap\O|}{(t/\lambda)^{N+r}}=\lambda^{-r}{\ovb{\M}}^{r}(\ty,\O)
$$
and similarly for the lower limit which concludes the proof of the lemma.
\end{proof}

The next result is a scaling property of the distance zeta function at infinity which will prove useful in examples, as well as in the construction of quasiperiodic sets at infinity.

\begin{proposition}[Scaling property of the distance zeta function at infinity]\label{scaling_prop}
Let $\O$ be a Lebesgue measurable subset of $\eR^N$ with finite Lebesgue measure, $T>0$ and $\lambda>0$.
Then we have $D(\zeta_{\ty,\lambda\O}=D(\zeta_{\ty,\O})=\ovb{\dim}_B(\ty,\O)$ and
\begin{equation}\label{scaling}
\zeta_{\ty,\g\O}(s;\g T)=\g^{-s}\zeta_{\ty,\O}(s;T)
\end{equation}
for all $s\in\Ce$ with $\re s>\ovb{\dim}_B(\ty,\O)$.
Furthermore, if $\omega$ is a simple pole of a meromorphic extension of $\zeta_{\ty,\O}$ to some open connected neighborhood of the critical line $\{\re s=\ovb{\dim}_B(\ty,\O)\}$, 
then
\begin{equation}\label{res_skal}
\res(\zeta_{\ty,\g\O},\omega)=\g^{-\omega}\res(\zeta_{\ty,\O},\omega).
\end{equation}
\end{proposition}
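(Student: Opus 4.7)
The plan is to derive all three claims from a single direct change of variables $x=\lambda y$ in the defining integral, using Lemma \ref{skaliranje_lema} to handle the Minkowski-theoretic aspect.

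First I would establish the abscissa of convergence statement. By Lemma \ref{skaliranje_lema}, for every $r\in\mathbb{R}$ we have ${\ovb{\M}}^r(\infty,\lambda\Omega)=\lambda^{-r}{\ovb{\M}}^r(\infty,\Omega)$; since $\lambda>0$, multiplying by $\lambda^{-r}$ does not change whether this quantity is $0$, finite or $+\infty$. Consequently $\ovb{\dim}_B(\infty,\lambda\Omega)=\ovb{\dim}_B(\infty,\Omega)$ directly from the definition \eqref{updiminf}. Theorem \ref{analiticinf}(a) then yields $D(\zeta_{\infty,\lambda\Omega})=\ovb{\dim}_B(\infty,\lambda\Omega)=\ovb{\dim}_B(\infty,\Omega)=D(\zeta_{\infty,\Omega})$.

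Second, I would prove the functional equation \eqref{scaling} by a substitution. For $\re s>\ovb{\dim}_B(\infty,\Omega)$, write
\begin{equation*}
\zeta_{\infty,\lambda\Omega}(s;\lambda T)=\int_{B_{\lambda T}(0)^c\cap\lambda\Omega}|x|^{-s-N}\di x,
\end{equation*}
and make the substitution $x=\lambda y$, so that $\di x=\lambda^N\di y$ and $|x|=\lambda|y|$. The domain transforms as $B_{\lambda T}(0)^c\cap\lambda\Omega=\lambda\bigl(B_T(0)^c\cap\Omega\bigr)$, so
\begin{equation*}
\zeta_{\infty,\lambda\Omega}(s;\lambda T)=\int_{B_T(0)^c\cap\Omega}(\lambda|y|)^{-s-N}\lambda^N\di y=\lambda^{-s}\int_{B_T(0)^c\cap\Omega}|y|^{-s-N}\di y=\lambda^{-s}\zeta_{\infty,\Omega}(s;T).
\end{equation*}
By the identity theorem for holomorphic functions, since both sides of \eqref{scaling} are holomorphic on $\{\re s>\ovb{\dim}_B(\infty,\Omega)\}$ (by Theorem \ref{analiticinf} applied to both RFDs, using the equality of abscissae from the first step), and agree for large real $s$, equality holds throughout.

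Third, for the residue formula, I would use that a change of the truncation parameter $T$ modifies $\zeta_{\infty,\Omega}(\,\cdot\,;T)$ by an entire function (as noted immediately after \eqref{infzeta}), so residues at any pole are independent of $T$. Thus, if $\omega$ is a simple pole of the meromorphic extension of $\zeta_{\infty,\Omega}$, it is also a simple pole of the meromorphic extension of $\zeta_{\infty,\lambda\Omega}$ (since $\lambda^{-s}$ is entire and nowhere vanishing), and
\begin{equation*}
\res(\zeta_{\infty,\lambda\Omega},\omega)=\lim_{s\to\omega}(s-\omega)\zeta_{\infty,\lambda\Omega}(s;\lambda T)=\lim_{s\to\omega}(s-\omega)\lambda^{-s}\zeta_{\infty,\Omega}(s;T)=\lambda^{-\omega}\res(\zeta_{\infty,\Omega},\omega),
\end{equation*}
which is \eqref{res_skal}. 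There is no serious obstacle here: the only subtlety is to match the truncation parameter on the left-hand side ($\lambda T$) with the one on the right-hand side ($T$) so that the substitution gives a clean identity, and to invoke the $T$-independence of residues to pass to the stated formula.
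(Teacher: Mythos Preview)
Your proposal is correct and follows essentially the same approach as the paper: both use Lemma~\ref{skaliranje_lema} for the equality of upper box dimensions, the change of variables $x=\lambda y$ in the defining integral to obtain \eqref{scaling}, analytic continuation to extend the identity, and the limit $\lim_{s\to\omega}(s-\omega)\zeta_{\infty,\lambda\Omega}(s;\lambda T)$ together with the $T$-independence of residues to deduce \eqref{res_skal}. Your version is slightly more explicit in citing Theorem~\ref{analiticinf} and the $T$-independence remark, but there is no substantive difference.
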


\begin{proof}
From Lemma~\ref{skaliranje_lema} we know that $\ovb{\dim}_B(\ty,\g\O)=\ovb{\dim}_B(\ty,\O)$.
We will prove the scaling formula~\eqref{scaling} by introducing a new variable $y=x/\g$ and using the change of variables formula for the Lebesgue integral.
Noting that $\di x=\g^N\di y$, we have
\begin{equation}
\begin{aligned}
\zeta_{\ty,\g\O}(s;\g T)&=\int_{B_{\g T}(0)\cap\g\O}|x|^{-s-N}\di x\\
&=\int_{B_{T}(0)\cap\O}|\g y|^{-s-N}\g^N\di y\\
&=\g^{-s}\int_{B_{T}(0)\cap\O}|y|^{-s-N}\di y=\g^{-s}\zeta_{\ty,\O}(s;T)
\end{aligned}
\end{equation}
for $s\in\Ce$ such that $\re s>\ovb{\dim}_B(\ty,\O)$.

Note that by the principle of analytic continuation, if one of the two zeta functions in~\eqref{scaling} has a meromorphic extension to some open connected neighborhood $U$ of the critical line, then so does the other and~\eqref{scaling} is still valid for $s\in U$.
Furthermore, if that is the case and $\omega\in U$ is a simple pole of $\zeta_{\ty,\O}$, then we have that
$$
\g^{-s}(s-\omega)\zeta_{\ty,\O}(s;T)=(s-\omega)\zeta_{\ty,\g\O}(s;\g T)
$$
holds on a pointed neighborhood of $\omega$.
Finally, since the value of the residue of the distance zeta function at infinity does not depend on $T$ we get~\eqref{res_skal} by letting $s\to\omega$, $s\neq\omega$.
\end{proof}
 
We will now prove a result which will be very useful for almost all examples that we will look at.
Namely, as before, for a set $\O$ in $\eR^N$ with finite Lebesgue measure it will be easier to calculate (as we have already done several times) a closed form for the corresponding distance zeta function at infinity by using the max norm in $\eR^N$ instead of the usual Euclidean norm.



\begin{definition}\label{hold_norm}
Let $\|\cdot\|_1$ and $\|\cdot\|_2$ be two $($necessarily equivalent$)$ norms on $\eR^N$ and let $\O\subseteq\eR^N$.
We will say that $\|\cdot\|_1$ and $\|\cdot\|_2$ are {\em equivalent of order $\a\in\eR$ for $(\ty,\O)$} if
\begin{equation}\label{norm_asym}
\|x\|_1=\|x\|_2+O\left(\|x\|_1^{\a}\right),\quad\mathrm{as}\quad \|x\|_1\to+\ty,\ x\in\O.
\end{equation}
In this case we will write
\begin{equation}
\|\cdot\|_1\underset{(\ty,\O)}{\overset{\a}{\sim}} \|\cdot\|_2.
\end{equation}
\end{definition}

This equivalence is well defined since the two norms are equivalent in the standard sense.
More precisely, since there exist $m,M>0$ such that $m\|\cdot\|_1\leq\|\cdot\|_2\leq M\|\cdot\|_1$ we have that $O\left(\|x\|_1^{\a}\right)=O\left(\|x\|_2^{\a}\right)$ for every $\a\in\eR$ when $\|x\|_1\to +\ty$ or $\|x\|_2\to +\ty$.
From this, one gets symmetry and transitivity easily.

\begin{theorem}[Behavior under change of norm]\label{equiv_mero}
Let $\O$ be a Lebesgue measurable subset of $\eR^N\setminus\{0\}$ with finite Lebesgue measure and assume $\ovb{D}:=\ovb{\dim}_B(\ty,\O)<-N$.
Furthermore, assume that $\|\cdot\|$ is a norm in $\eR^N$ such that for some $\a\in(-\ty,1)$ we have 
\begin{equation}
|x|\underset{(\ty,\O)}{\overset{\a}{\sim}}\|x\|.
\end{equation}
Then, the difference
\begin{equation}
{\zeta}_{\ty,\O}(\,\cdot\,)-{\zeta}_{\ty,\O}(\,\cdot\,;\|\cdot\|)
\end{equation}
is holomorphic on $($at least$)$ the half-plane
\begin{equation}\label{podskup}
\{\re s>(\ovb{D}-(1-\a))\}.
\end{equation}
\end{theorem}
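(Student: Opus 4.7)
The plan is to express the difference $\zeta_{\ty,\O}(\,\cdot\,)-\zeta_{\ty,\O}(\,\cdot\,;\|\cdot\|)$ as a single integral and then estimate the integrand pointwise using the asymptotic equivalence of the two norms. For $\re s>\ovb{D}$ set
$$F(x,s):=|x|^{-s-N}-\|x\|^{-s-N},\qquad x\in\O,$$
so that
$$\zeta_{\ty,\O}(s)-\zeta_{\ty,\O}(s;\|\cdot\|)=\int_{B_T(0)^c\cap\O}F(x,s)\di x.$$
For every fixed $x\in\O$ the map $s\mapsto F(x,s)$ is entire, since $0\notin\O$ makes both $|x|$ and $\|x\|$ strictly positive. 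By the standard holomorphicity principle for parameter integrals (used in the same way as in the proof of Theorem~\ref{tubedistinf}, invoking \cite[Theorem~2.1.44]{fzf}), it is enough to exhibit a locally uniform $L^1$-majorant of $F(\,\cdot\,,s)$ on the half-plane from the statement.

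The key pointwise estimate comes from the elementary identity
$$F(x,s)=-(s+N)\int_{\|x\|}^{|x|}r^{-s-N-1}\di r,$$
which immediately yields
$$\abs{F(x,s)}\leq|s+N|\cdot\abs{|x|-\|x\|}\cdot\rho(x,s)^{-\re s-N-1},$$
where $\rho(x,s):=\min(|x|,\|x\|)$ if $\re s>-N-1$ and $\rho(x,s):=\max(|x|,\|x\|)$ otherwise. Any two norms on $\eR^N$ are equivalent, so there exist $0<c_1\leq c_2$ with $c_1|x|\leq\|x\|\leq c_2|x|$, whence $c_1|x|\leq\rho(x,s)\leq c_2|x|$ for every $x\in\eR^N\setminus\{0\}$. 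Combining this with the hypothesis $\abs{|x|-\|x\|}=O(|x|^{\a})$ as $|x|\to+\ty$ with $x\in\O$, we obtain $T_0\geq T$ and, for every compact $K\subset\Ce$, a constant $C_K>0$ such that
$$\abs{F(x,s)}\leq C_K\,|x|^{-\re s-N-1+\a}\quad\text{for all }s\in K\text{ and all }x\in\O\text{ with }|x|\geq T_0.$$

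By Theorem~\ref{analiticinf}(a) the integral $\int_{B_{T_0}(0)^c\cap\O}|x|^{-\sigma-N-1+\a}\di x$ is finite precisely when $\sigma+1-\a>\ovb{D}$; taking $\sigma=\inf_{s\in K}\re s$ shows that the displayed bound is a locally uniform integrable majorant on the open half-plane $\{\re s>\ovb{D}-(1-\a)\}$. The remaining contribution to $\zeta_{\ty,\O}(s)-\zeta_{\ty,\O}(s;\|\cdot\|)$ is the integral of $F(x,s)$ over the bounded annulus $B_T(0)^c\cap B_{T_0}(0)\cap\O$, on which $|x|$ and $\|x\|$ are bounded and bounded away from $0$; there the integrand is jointly continuous and entire in $s$, so Morera's theorem together with Fubini's theorem shows that this piece is entire in $s$. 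Summing both contributions yields holomorphy on $\{\re s>\ovb{D}-(1-\a)\}$, as claimed. The only genuinely delicate step is the pointwise estimate on $F(x,s)$; the hypothesis $\a<1$ is essential, as it is precisely what makes $1-\a>0$ and so produces a strictly larger half-plane than the initial domain of absolute convergence $\{\re s>\ovb{D}\}$.
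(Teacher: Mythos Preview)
Your proof is correct and follows the same overall strategy as the paper: bound the integrand $F(x,s)=|x|^{-s-N}-\|x\|^{-s-N}$ pointwise by a multiple of $|x|^{-\re s-N-1+\a}$ and then invoke the holomorphicity principle for parameter integrals. The one substantive difference is in how the pointwise bound is obtained. The paper appeals to a corollary of the complex mean value theorem (Corollary~\ref{comp_ocj}, from \cite{EvJa}), which yields $|F(x,s)|\leq\sqrt{2}\,|s+N|\,r(s,x)^{-\re s-N-1}\big||x|-\|x\|\big|$ for some $r(s,x)$ strictly between $|x|$ and $\|x\|$; this forces them to treat separately the exceptional set $X=\{x\in\O:|x|=\|x\|\}$ on which the two norms agree. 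Your integral identity $F(x,s)=-(s+N)\int_{\|x\|}^{|x|}r^{-s-N-1}\di r$ is more elementary, avoids the $\sqrt{2}$ factor, and works uniformly even when $|x|=\|x\|$, so no exceptional set is needed. A small point you glossed over: to get $c_1|x|\leq\rho(x,s)\leq c_2|x|$ for \emph{both} choices of $\rho$ you implicitly need $c_1\leq 1\leq c_2$, which the paper enforces explicitly by replacing $c_1,c_2$ with $\min\{1,c_1\},\max\{1,c_2\}$; this is harmless but worth noting.
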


\begin{proof}
We observe that for every $s\in\Ce$ the function $f_s(z):=z^{-s-N}$ is holomorphic on $\Ce\setminus\{0\}$ and define $F(s,x):=f_s(|x|)-f_s(\|x\|)$.
Then, from Corollary~\ref{comp_ocj} applied to $f_s$, we conclude that there exists a function $r\colon\Ce\times({\O}\setminus X)\to(0,+\ty)$ such that
\begin{equation}\label{eksp}
|F(s,x)|=\left||x|^{-s-N}-\|x\|^{-s-N}\right|\leq\sqrt{2}|s+N|r(s,x)^{-\re s-N-1}\big||x|-\|x\|\big|,
\end{equation}
where $X:=\{x\in\Omega:|x|=\|x\|\}$.
Let $m$ and $M$ be the positive constants such that $m|x|\leq\|x\|\leq M|x|$ for $x\in\eR^N$ and denote
\begin{equation}
C_m:=\min\{1,m\},\quad C_M:=\max\{1,M\}.
\end{equation}
Furthermore, since
\begin{equation}
|x|<r(s,x)<\|x\|\quad\mathrm{or}\quad \|x\|<r(s,x)<|x|
\end{equation}
we have that
\begin{equation}
C_m|x|\leq r(s,x)\leq C_M|x|,
\end{equation}
which implies that
\begin{equation}\label{ocjw}
r(s,x)^{-\re s-N-1}\leq|x|^{-\re s-N-1}\max\{C_m^{-\re s-N-1},C_M^{-\re s-N-1}\}.
\end{equation}
Furthermore, by taking $T>1$ sufficiently large, we can assume that $\big||x|-\|x\|\big|\leq c|x|^{\alpha}$ which together with~\eqref{ocjw} and~\eqref{eksp} yields
\begin{equation}
|F(s,x)|\leq c\sqrt{2}|s+N|\max\{C_m^{-\re s-N-1},C_M^{-\re s-N-1}\}|x|^{-\re s-N-1+\a}.
\end{equation}
Suppose now that $K$ is a compact subset in $\{\re s>\ovb{D}-(1-\a)\}$, and let
\begin{equation}
C_K:=\max_{s\in K}\big\{c\sqrt{2}|s+N|\max\{C_m^{-\re s-N-1},C_M^{-\re s-N-1}\}\big\}
\end{equation}
and define the function $g_K$ as follows:
\begin{equation}
g_K(x):=C_K|x|^{-(\min\{\re s\,:\,s\in K\}-\a+1)-N}
\end{equation}
so that we have $|F(s,x)_{|K}|\leq g_K(x)$ for $x\in{_T\O}\setminus X$.
We observe that $g_K$ is in $L^1({_T\O})$, since if $s\in K$, then $\re s>\ovb{D}-(1-\a)$ so that
$$
\min\{\re s\,:\,s\in K\}-\a+1>\ovb{D}-(1-\a)-\a+1=\ovb{D},
$$
which, in turn, implies that
$$
\int_{_T\O}g_K(x)\di x=C_K\,\zeta_{\ty}(\min\{\re s\,:\,s\in K\}-\a+1;\O)<\ty.
$$

Finally, we conclude that $F(s,x)$ satisfies the hypotheses of   \cite[Theorem 2.1.47 and Remark 2.1.48]{fzf} (see also \cite{Mattn}) and therefore
$$
\int_{_T\O\setminus X}F(s,x)\di x={\zeta}_{\ty,\O\setminus X}(s)-{\zeta}_{\ty,\O\setminus X}(s;\|\cdot\|)
$$
is holomorphic on $\{\re s>\ovb{D}-(1-\a)\}$.
On the other hand, ${\zeta}_{\ty,X}\equiv{\zeta}_{\ty,X}(s;\|\cdot\|)$ and one also has that $\zeta_{\ty,\O}(s)=\zeta_{\ty,\O\setminus X}(s)+\zeta_{\ty, X}(s)$ and analogously for ${\zeta}_{\ty,\O}(s;\|\cdot\|)$.
In light of this observation the proof of the theorem is now complete.
\end{proof}

\begin{corollary}
Let $\O$ be a measurable subset in $\eR^N$ with $|\O|<\ty$ such that ${\dim}_B(\ty,\O)={D}$ exists.
Furthermore, assume that the distance zeta function of $\O$ at infinity can be meromorphically extended to an open connected neighborhood $U$ of the closed half-plane $\{\re s\geq D\}$.
Let $\|\,.\,\|$ be another norm in $\eR^N$ such that $|x|\underset{(\ty,\O)}{\overset{\a}{\sim}}\|x\|$ for some $\a\in(-\ty,1)$.
Then $\widetilde{\zeta}_{\ty,\O}(\,\cdot\,;\|\,.\,\|)$ can be meromorphically extended to $($at least$)$ $V:=U\cap\{\re s>D-(1-\a)\}$.
Furthermore, the sets of poles in $V$ of the two zeta functions, together with their multiplicities, coincide.
Moreover, the principal parts of the Laurent expansion around each pole in $V$ also coincide.
In particular, if $\omega$ is a simple pole in $V$, then
\begin{equation}
\res(\widetilde{\zeta}_{\ty,\O},\omega)=\res(\widetilde{\zeta}_{\ty,\O}(\,\cdot\,;\|\,.\,\|),\omega).
\end{equation}
\end{corollary}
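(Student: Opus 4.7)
The plan is to use Theorem~\ref{equiv_mero} as the principal engine: it tells us that the two distance zeta functions differ by a holomorphic function, and meromorphic extensions are invariant (together with their principal parts) under addition of holomorphic functions. Since $\dim_B(\ty,\O)=D$ exists we have $\ovb{D}=D$, so Theorem~\ref{equiv_mero} applies and the difference $\zeta_{\ty,\O}(\,\cdot\,)-\zeta_{\ty,\O}(\,\cdot\,;\|\cdot\|)$ is holomorphic on the open half-plane $H:=\{\re s>D-(1-\a)\}$.

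Next, I would combine this with the hypothesis. On $V=U\cap H$ the distance zeta function $\zeta_{\ty,\O}$ is meromorphic by hypothesis, and the aforementioned difference is holomorphic. Rewriting
\begin{equation}
\zeta_{\ty,\O}(s;\|\cdot\|)=\zeta_{\ty,\O}(s)-\bigl[\zeta_{\ty,\O}(s)-\zeta_{\ty,\O}(s;\|\cdot\|)\bigr],
\end{equation}
exhibits $\zeta_{\ty,\O}(\,\cdot\,;\|\cdot\|)$ as the difference of a meromorphic and a holomorphic function on $V$, hence it admits a meromorphic extension to $V$. Moreover, adding a holomorphic function to a meromorphic one preserves the location and order of all poles as well as the principal part of the Laurent expansion at each pole, so the poles of the two zeta functions in $V$ agree with multiplicities and with identical principal parts. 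In particular, at a simple pole $\omega\in V$ the residues coincide.

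Finally, to pass from the statement for the distance zeta function to the tube zeta function (as written with the tilde in the corollary), I would invoke the functional equation of Theorem~\ref{tubedistinf}, which in the $\|\cdot\|$-setting takes the form $\zeta_{\ty,\O}(s;\|\cdot\|)=T^{-s-N}|_T^{\|\cdot\|}\O|-(s+N)\widetilde{\zeta}_{\ty,\O}(s;\|\cdot\|)$, where $_T^{\|\cdot\|}\O$ denotes the complement of the $\|\cdot\|$-ball intersected with $\O$. Since the tube and distance zeta functions differ only by an entire factor and a multiplication by $-(s+N)$, meromorphicity on $V$ transfers between them, as do the poles (except possibly $s=-N$, which lies well below $D$ since $D<-N$, hence outside $V$) with the residues at a simple pole $\omega$ related by $\res(\zeta_{\ty,\O}(\,\cdot\,;\|\cdot\|),\omega)=-(N+\omega)\res(\widetilde{\zeta}_{\ty,\O}(\,\cdot\,;\|\cdot\|),\omega)$, matching the analogous identity for the Euclidean norm.

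The only mildly subtle point I expect is verifying that the pole $s=-N$ introduced formally by the factor $(s+N)$ in the functional equation does not lie in $V$; this follows from $D<-N$ and the fact that $V\subseteq\{\re s>D-(1-\a)\}$ with $\alpha<1$, so care is only needed when $\alpha$ is close to $1$, but even there $D<-N$ keeps $-N$ outside $V$ provided $\alpha<1$ strictly (which is assumed). Everything else is a direct bookkeeping consequence of Theorem~\ref{equiv_mero} and the functional equation.
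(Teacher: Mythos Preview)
Your approach is essentially the paper's: invoke Theorem~\ref{equiv_mero} to get that the difference of the two distance zeta functions is holomorphic on $\{\re s>D-(1-\alpha)\}$, then read off that meromorphicity, pole locations, multiplicities, and principal parts transfer. The paper's proof is literally two sentences to this effect; your added step via the functional equation (Theorem~\ref{tubedistinf}) to pass from distance to tube zeta functions is the correct and necessary elaboration given the tildes in the statement.

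There is, however, a genuine mistake in your last paragraph. You claim $s=-N$ lies \emph{outside} $V$, but the opposite is true: since $D\le -N$ and $1-\alpha>0$, we have $-N\ge D>D-(1-\alpha)$, so $-N$ belongs to the half-plane $\{\re s>D-(1-\alpha)\}$; and $-N\in\{\re s\ge D\}\subseteq U$. Hence $-N\in V$. The right way to dispose of the issue is not to exclude $-N$ from $V$ but to observe that the factor $(s+N)$ in the functional equation does not create a new pole there: for $D<-N$ the distance zeta function (in either norm) is holomorphic at $-N$, and plugging $s=-N$ into $\zeta_{\ty,\O}(s)=\int_{_T\O}|x|^{-s-N}\di x$ gives $|_T\O|=T^{0}|_T\O|$, so the numerator $T^{-s-N}|_T\O|-\zeta_{\ty,\O}(s)$ vanishes at $s=-N$ and the singularity of $\widetilde{\zeta}_{\ty,\O}$ there is removable (for both norms). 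With this correction your bookkeeping goes through.
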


\begin{proof}
Since, by hypothesis, $\widetilde{\zeta}_{\ty,\O}$ is meromorphic on $V=U\cap\{\re s>D-(1-\a)\}$ the corollary follows directly from Theorem~\ref{equiv_mero} which states that the difference of these two distance zeta functions is holomorphic on $V$.
\end{proof}

\begin{remark}
It is clear that the above corollary is still valid if we interchange the roles of the two distance zeta functions.
\end{remark}

An important special case of the above theorem, which we will be using in almost all examples considered, is when the set $\O\subseteq\eR^N$ is contained in a cylinder of finite radius.
This is in fact \cite[Proposition 6]{ra2} for which the proof was omitted in \cite{ra2}.
We restate it here and give a short proof by using Theorem \ref{equiv_mero}.

\begin{proposition}\label{euc_ty}
Let $\O\subseteq\eR^N$ with $|\O|<\ty$ be such that it is contained in a cylinder
\begin{equation}
x_2^2+x_3^2+\cdots+x_N^2\leq C
\end{equation}
for some constant $C>0$ where $x=(x_1,\ldots,x_N)$.
Furthermore, let $\ovb{D}:=\ovb{\dim}_B(\ty,\O)$ and $T>0$.
Then
\begin{equation}
\zeta_{\ty,\O}(s;T)-\zeta_{\ty,\O}(s;T;|\cdot|_{\infty})
\end{equation}
is holomorphic on $($at least$)$ the half-plane $\{\re s>\ovb{D}-2\}$.

Furthermore, if any of the two distance zeta functions possesses a meromorphic extension to some open connected neighborhood $U$ of the critical line $\{\re s=\ovb{D}\}$, then the other one possesses a meromorphic extension to $($at least$)$ $V:=U\cap\{\re s>\ovb{D}-2\}$.

Moreover, their multisets of poles in $U\cap\{\re s>\ovb{D}-2\}$ coincide.
\end{proposition}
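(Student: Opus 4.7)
The plan is to reduce Proposition \ref{euc_ty} directly to Theorem \ref{equiv_mero} by showing that for $\O$ contained in a cylinder of bounded radius about the $x_1$-axis, the Euclidean norm and the max norm are equivalent of order $\a = -1$ for $(\ty,\O)$ in the sense of Definition \ref{hold_norm}. Once that is established, Theorem \ref{equiv_mero} applied with $\|\cdot\| = |\cdot|_\ty$ and $\a = -1$ gives holomorphy of the difference on $\{\re s > \ovb{D} - (1-(-1))\} = \{\re s > \ovb{D} - 2\}$, which is precisely the half-plane claimed.

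First I would show that for $x = (x_1,\ldots,x_N) \in \O$ with $|x|$ large enough, one has $|x|_\ty = |x_1|$. Since $x_2^2 + \cdots + x_N^2 \leq C$, each coordinate $x_i$ with $i \geq 2$ satisfies $|x_i| \leq \sqrt{C}$, while $x_1^2 = |x|^2 - (x_2^2+\cdots+x_N^2) \geq |x|^2 - C$. Hence, for $|x|$ sufficiently large, $|x_1| > \sqrt{C} \geq |x_i|$ for all $i \geq 2$, and thus $|x|_\ty = |x_1|$. In particular, $|x_1| \sim |x|$ as $|x| \to +\ty$ with $x \in \O$.

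Next I would expand the Euclidean norm using $\sqrt{1+u} = 1 + u/2 + O(u^2)$ for $u \to 0$:
\begin{equation*}
|x| \;=\; |x_1|\sqrt{1 + \frac{x_2^2+\cdots+x_N^2}{x_1^2}} \;=\; |x_1| + \frac{x_2^2+\cdots+x_N^2}{2|x_1|} + O\bigl(|x_1|^{-3}\bigr).
\end{equation*}
Subtracting $|x|_\ty = |x_1|$ and using the uniform bound $x_2^2+\cdots+x_N^2 \leq C$ together with $|x_1| \sim |x|$, we obtain
\begin{equation*}
|x| - |x|_\ty \;=\; O\bigl(|x|^{-1}\bigr) \quad \text{as } |x| \to +\ty,\ x \in \O,
\end{equation*}
which is exactly the asymptotic relation $|x| \underset{(\ty,\O)}{\overset{-1}{\sim}} |x|_\ty$ of Definition \ref{hold_norm}.

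With the equivalence of order $\a = -1$ established, Theorem \ref{equiv_mero} immediately yields that $\zeta_{\ty,\O}(\,\cdot\,;T) - \zeta_{\ty,\O}(\,\cdot\,;T;|\cdot|_\ty)$ is holomorphic on $\{\re s > \ovb{D} - 2\}$. The statement about meromorphic extensions is then formal: if one of the two zeta functions admits a meromorphic extension to an open connected neighborhood $U$ of the critical line, the other, differing from it by a function holomorphic on $V := U \cap \{\re s > \ovb{D} - 2\}$, admits a meromorphic extension to $V$, and the two extensions share the same poles with the same multiplicities and principal parts throughout $V$. No real obstacle is anticipated; the only substantive point is the asymptotic estimate $|x| - |x|_\ty = O(|x|^{-1})$ on the cylinder, and the rest is a direct invocation of Theorem \ref{equiv_mero}.
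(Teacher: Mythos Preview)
Your proposal is correct and follows essentially the same route as the paper: establish $|x|\underset{(\ty,\O)}{\overset{-1}{\sim}}|x|_\ty$ on the cylinder and then invoke Theorem~\ref{equiv_mero} with $\a=-1$. The only cosmetic difference is that the paper obtains the estimate $|x|-|x|_\ty=O(|x|^{-1})$ via the rationalization identity $|x|-|x_1|=\dfrac{\sum_{i\ge 2}x_i^2}{|x|+|x_1|}$ rather than through the Taylor expansion of $\sqrt{1+u}$, but the content is identical.
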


\begin{proof}
We observe that for $T>0$ sufficiently large we have
$$
|x|-|x|_{\ty}=|x|-|x_1|=\frac{\sum_{i=2}^Nx_i^2}{|x|+|x_1|}\leq C|x|^{-1},\quad x\in{_T\O}.
$$
In other words $|x|\underset{(\ty,\O)}{\overset{-1}{\sim}}\|x\|$ and the conclusion now follows by applying Theorem~\ref{equiv_mero}.
\end{proof}

\section{Maximally hyperfractal and quasiperiodic sets at infinity}\label{qp_sets}

In this section we will construct quasiperiodic subsets of $\eR^2$ with prescribed box dimension at infinity.
We will use the Cantor-like two parameter sets $\O_{\ty}^{(a,b)}$ introduced in \cite{ra2} which will be our building blocks for the construction of a {\em maximally hyperfractal set at infinity}; that is, according to the terminology of~\cite{fzf}, a set $\O$ with its distance zeta function at infinity having the critical line $\{\re s=\ovb{\dim}_B(\ty,\O)\}$ as a natural boundary.
This construction will also give examples of algebraically and transcendentally quasiperiodic sets at infinity by using some classical results from transcendental number theory.

One of the open problems in~\cite{fzf} was the question of existence of algebraically quasiperiodic bounded sets and relative fractal drums.
The results of this section give a positive answer in the case of relative fractal drums of type $(\ty,\O)$.
Furthermore, a similar construction can be performed in the context of standard relative fractal drums.
More precisely, one can take the inverted relative fractal drum $(\mathbf{0},\Phi(\O))$ where $\O$ is the quasiperiodic set at infinity constructed here.
The distance zeta functions of $(\ty,\O)$ and $(\mathbf{0},\Phi(\O))$ are essentially the same by \cite[Theorem 3]{ra2}.
On the other hand, one should still check directly the condition of quasiperiodicity of the relative fractal drum $(\mathbf{0},\Phi(\O))$ since we do not have a direct way of doing it via the geometric inversion.
The reason for this is in the fact that we would need an asymptotic formula which relates the tube function $t\mapsto|B_t(0)^c\cap\O|$ of $\O$ at infinity and the relative tube function $t\mapsto|B_{1/t}(0)\cap\Phi(\O)|$ as $t\to +\ty$.
We do not know if such formula can be derived in the general case, but we still conjecture that $(\mathbf{0},\Phi(\O))$ will be a quasiperiodic relative fractal drum with the same quasiperiods as $(\ty,\O)$.

Another idea to construct an algebraically quasiperiodic relative fractal drum $(A,\O)$ is to use the geometric inversion in one coordinate; that is, $\Phi_1(x,y):=(1/x,y)$ and apply it to the quasiperiodic relative fractal drum $(\ty,\O)$ constructed here.
We leave this, as well as other properties of the `partial geometric inversion' for future work.
One more approach would be to consider a ``radial version'' of RFDs considered here since then the geometric inversion of such an RFD would be much more easier to handle.

We now recall the Cantor-like sets at infinity from \cite[Section 5]{ra2}.
Namely, one ``stacks'' the translated images of the two-parameter unbounded sets $\O_{m}^{(a,b)}$ along the $y$-axis on top of each other, where
$$
\O_{m}^{(a,b)}:=\{(x,y)\in\eR^2\,:\,x>a^{-m},\ 0<y<x^{-b}\},\quad m\geq 1.
$$
More precisely, for each $m\geq 1$ one takes $2^{m-1}$ copies of $\O_{m}^{(a,b)}$ and arranges all of them by vertical translations so that they are pairwise disjoint and lie in the strip $\{0\leq y\leq S\}$ where $S$ denotes the finite sum of widths of all of these sets.
Under the condition that $b>1+\log_{1/a}2>$ the disjoint union of all these sets, $\O_{\ty}^{(a,b)}$ is of finite volume and also $S$ is finite; see \cite{ra2} and Figure \ref{Cantor_u_besk}.

\begin{figure}[h]
\begin{center}
\includegraphics[width=12cm,height=6cm]{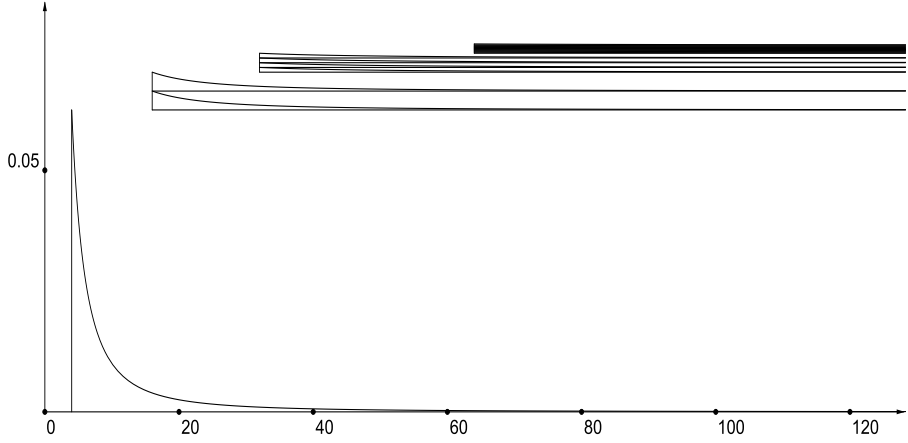}
\end{center}
\caption{The two parameter set $\O_{\ty}^{(a,b)}$ with $a=1/4$ and $b=2$. The axes are not in the same scale and only the first four steps of the set $\O_{\ty}^{(1/4,2)}$ are depicted; that is, for $m=1,2,3,4$.}
\label{Cantor_u_besk}
\end{figure}

In \cite[Example 6]{ra2}, the distance zeta function in terms of the $|\cdot|_{\ty}$ norm on $\mathbb{R}^2$ has been calculated and shown to be meromorphic in all of $\Ce$ and given by
\begin{equation}
\zeta_{\ty,\O_{\ty}^{(a,b)}}(s;|\cdot|_{\ty})=\frac{1}{s+b+1}\cdot\frac{1}{a^{-(s+b+1)}-2}.
\end{equation}
Furthermore, it was shown that the set of complex dimensions of $\O_{\ty}^{(a,b)}$ at infinity visible through $W:=\{\re s>\log_{1/a}-b-3\}$ is given by
\begin{equation}\label{twop_set}
\{-(b+1)\}\cup\left(\log_{1/a}2-(b+1)+\frac{2\pi}{\log (1/a)}\I\Ze\right)
\end{equation}
and from which it follows that
\begin{equation}
{{\ovb\dim}_{B}}(\ty,\O_{\ty}^{(a,b)})=\log_{1/a}2-(b+1).
\end{equation}

\begin{remark}
The {\em oscillatory period} of $\O_{\ty}^{(a,b)}$ is equal to $\mathbf{p}(a)=2\pi/\log(1/a)$ and note that that $\mathbf{p}(a)\to 0$ as $a\to 0^+$.
\end{remark}

\begin{example}\label{ex:2param}
We will compute the box dimension of $\O_{\ty}^{(a,b)}$ at infinity directly.
For the calculation we will measure the neighborhoods of infinity in the $|\cdot|_\ty$ norm.
As $\O_{\ty}^{(a,b)}$ is contained in a horizontal strip of finite width, according to \cite[Lemma 3]{ra2}, this will not affect the value of the Minkowski content of $\O_{\ty}^{(a,b)}$ at infinity; see also Lemma \ref{tube_functions} in the Appendix.
Now, for $t>1/a$ we have
$$
\begin{aligned}
|K_t(0)^c\cap\O_{\ty}^{(a,b)}|&=\sum_{n=1}^{\lfloor\log_{1/a}t\rfloor}2^{n-1}\int_t^{+\ty}x^{-b}\di x+\sum_{n>\lfloor\log_{1/a}t\rfloor}^{\ty}2^{n-1}\int_{a^{-n}}^{+\ty}x^{-b}\di x\\
&=\frac{1}{b-1}\left[t^{1-b}\sum_{n=1}^{\lfloor\log_{1/a}t\rfloor}2^{n-1}+\sum_{n>\lfloor\log_{1/a}t\rfloor}^{\ty}2^{n-1}(a^{b-1})^n\right]\\
&=\frac{1}{b-1}\left[t^{1-b}\left(2^{\lfloor\log_{1/a}t\rfloor}-1\right)+\frac{1}{a^{1-b}-2}\cdot 2^{\lfloor\log_{1/a}t\rfloor}a^{(b-1)\lfloor\log_{1/a}t\rfloor}\right]. 
\end{aligned}
$$ 
Using the fact that $\lfloor\log_{1/a}t\rfloor=\log_{1/a}t-\{\log_{1/a}t\}$ and $2^{\log_{1/a}t}=t^{\log_{1/a}2}$, we then have that
$$
\begin{aligned}
|K_t(0)^c\cap\O_{\ty}^{(a,b)}|&=\frac{t^{1-b+\log_{1/a}2}}{b-1}\left[2^{-\{\log_{1/a}t\}}+\frac{1}{a^{1-b}-2}\cdot\left(\frac{a^{1-b}}{2}\right)^{\{\log_{1/a}t\}}\right]-\frac{t^{1-b}}{b-1}.
\end{aligned}
$$
From this we deduce that for $D:=\log_{1/a}2-(b+1)$ we have
\begin{equation}
|K_t(0)^c\cap\O_{\ty}^{(a,b)}|=t^{2+D}\left(G(\log t)-\frac{t^{-\log_{1/a}2}}{b-1}\right)\quad\textrm{ as }t\to +\ty
\end{equation}
with $G$ being the $T$-periodic function
\begin{equation}\label{G_tau}
G(\tau):=\frac{2^{-\left\{\frac{\tau}{\log(1/a)}\right\}}}{b-1}\left(1+\frac{(a^{1-b})^{\left\{\frac{\tau}{\log(1/a)}\right\}}}{a^{1-b}-2}\right),
\end{equation}
where $T:=\log(1/a)$.
Furthermore, this result implies that
$$
{\dim}_B(\ty,\O_{\ty}^{(a,b)})=\log_{1/a}2-(b+1).
$$
Note that ${{\dim}}_B({\ty},\O_{\ty}^{(a,b)})\to -\ty$ as $b\to +\ty$ and ${{\dim}}_B({\ty},\O_{\ty}^{(a,b)})\to -(b+1)<-2$ as $a\to 0^+$ but can be made as close to $-2$ as desirable. 
Moreover, $\O_{\ty}^{(a,b)}$ is not Minkowski measurable at infinity but it is Minkowski nondegenerate with
\begin{equation}\label{G_sadrzaj_ab_Omega}
{\ovb{\M}}^{D}(\ty,\O_{\ty}^{(a,b)})=\max G=G(0)=\frac{1}{b-1}\cdot\frac{a^{1-b}-1}{a^{1-b}-2}
\end{equation}
and
\begin{equation}\label{D_sadrzaj_ab_Omega}
{\unb{\M}}^{D}(\ty,\O_{\ty}^{(a,b)})=\min G=G(\tau_{\mathrm{min}}),
\end{equation}
where $\tau_{{\mathrm{min}}}$ is the unique point of the global minimum of the function $G$ on the interval $[0,1]$ which can be explicitly computed:
$$
\tau_{{\mathrm{min}}}=\frac{\log(1+(b-1)\log_2a)-\log(2-a^{1-b})}{(b-1)\log a}.
$$
\end{example}

In the next theorem we will construct a maximal hyperfractal set $\O$ at infinity.
More precisely, we will now construct a set with a prescribed Minkowski dimension $D<-2$ at infinity such that every point on the abscissa of convergence $\{\re s=D\}$ is a nonremovable singularity of its zeta function at infinity.
In accordance with the definitions introduced in~\cite{fzf} in the case of relative fractal drums, will call such sets {maximally hyperfractal at infinity}.

\begin{theorem}[Maximally hyperfractal set at infinity]\label{qp_construction}
For any $D<-2$ there exists a set $\O\subseteq\eR^2$ of finite Lebesgue measure such that it is maximally hyperfractal with $\dim_B(\ty,\O)=D$ and Minkowski nondegenerate at infinity.
\end{theorem}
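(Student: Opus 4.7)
The strategy is to realize $\Omega$ as a rapidly shrinking, vertically stacked disjoint union of copies of the Cantor-like building blocks $\O_{\ty}^{(a,b)}$ from Example \ref{ex:2param}, with parameters engineered so that all copies share the same Minkowski dimension $D$ at infinity but have distinct oscillatory periods whose reciprocals accumulate densely on $(0,\ty)$. Concretely, for $n\in\eN$ I would set $a_n:=\E^{-n}$ and $b_n:=(\log 2)/n-(D+1)$; the condition $b_n>1+\log_{1/a_n}2$ ensuring finite volume of $\O_n:=\O_{\ty}^{(a_n,b_n)}$ reduces exactly to $D<-2$, and Example \ref{ex:2param} gives $\dim_B(\ty,\O_n)=D$ with Minkowski nondegeneracy at infinity and an explicit meromorphic distance zeta function whose simple poles on the critical line are $\omega_{n,k}=D+\tfrac{2\pi k}{n}\I$, $k\in\Ze$, all of residue $\tfrac{1}{2\log 2}$ when computed in the $|\cdot|_\ty$-norm.

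Next, I choose scaling factors $\g_n>0$ with hyper-exponential decay (say $\g_n=\E^{-n^2}$) and vertical translations $h_n:=\sum_{m<n}\g_m S_m$, where $S_n<\ty$ is the vertical extent of $\O_n$, and I put
\begin{equation*}
\Omega:=\bigsqcup_{n=1}^{\ty}\bigl(\g_n\O_n+(0,h_n)\bigr).
\end{equation*}
The pieces are pairwise disjoint, $\Omega$ lies in a horizontal strip of finite height, and $|\Omega|=\sum_n\g_n^2|\O_n|<\ty$. Using Proposition \ref{euc_ty} to pass from the Euclidean norm to the $|\cdot|_\ty$-norm (up to an error holomorphic on $\{\re s>D-2\}$) and the scaling identity of Proposition \ref{scaling_prop}, I will obtain, in an open neighborhood of the critical line, the series representation
\begin{equation*}
\zeta_{\ty,\Omega}(s)=\sum_{n=1}^{\ty}\g_n^{-s}\,\zeta_{\ty,\O_n}(s;|\cdot|_\ty)+H(s),
\end{equation*}
with $H$ holomorphic on an open neighborhood of the closed half-plane $\{\re s\geq D\}$ and the series converging uniformly on compacts of $\{\re s>D\}$ avoiding the pole set $\{\omega_{n,k}\}$.

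Since $\bigcup_n\{k/n:k\in\Ze\}$ is dense in $\eR$, the $\omega_{n,k}$ form a dense subset of the critical line; once I show each $\omega_{n,k}$ remains a pole of the infinite sum, no meromorphic extension across any point of $\{\re s=D\}$ is possible (poles of a meromorphic function are isolated), so $\Omega$ is maximally hyperfractal. The dimension and Minkowski nondegeneracy at infinity follow by summing the periodic tube asymptotic of Example \ref{ex:2param} over the pieces: Lemma \ref{skaliranje_lema} together with the $T$-periodic function $G_n$ from \eqref{G_tau} yield
\begin{equation*}
|{_t\Omega}|=t^{2+D}\sum_{n=1}^{\ty}\g_n^{|D|}G_n(\log t-\log\g_n)+o(t^{2+D}),
\end{equation*}
whose $\limsup$ is finite by the rapid decay of $\g_n$ and whose $\liminf$ is bounded below by the strictly positive $n=1$ contribution, since $G_1$ is bounded away from $0$ on its period.

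The main obstacle is the non-cancellation step: a given $\omega_0=D+2\pi(p/q)\I$ (in lowest terms) is a pole of $\zeta_{\ty,\O_n}(\,\cdot\,;|\cdot|_\ty)$ precisely for $n\in q\eN$, each contributing residue $\tfrac{1}{2\log 2}$, so the residue of the $n=qm$ summand of the series at $\omega_0$ equals $\g_{qm}^{-\omega_0}/(2\log 2)$, of modulus $\g_{qm}^{|D|}/(2\log 2)$. Choosing $\g_n$ with sufficiently fast decay guarantees that the modulus of the $m=1$ term strictly exceeds the sum of moduli of all terms with $m\geq 2$, preventing cancellation and ensuring $\omega_0$ is genuinely a pole of $\zeta_{\ty,\Omega}$; uniformity in $\omega_0$ is automatic because $|\g_n^{-\omega_0}|$ depends on $\omega_0$ only through $\re\omega_0=D$.
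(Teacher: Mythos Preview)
Your construction is essentially the paper's: both build $\Omega$ as a disjoint union of scaled and vertically translated copies of $\O_{\ty}^{(a_n,b_n)}$ with $b_n=\log_{1/a_n}2-(D+1)$ so that every piece has Minkowski dimension $D$ at infinity, then read off nondegeneracy from the summed tube asymptotics and density of singularities from the superposition of the pieces' zeta functions. The paper takes an arbitrary $(a_n)\subset(0,1/2)$ decreasing to $0$ and scales by $2^{-n}$; it then asserts that the union of the individual pole sets is dense on the critical line without arguing non-cancellation. Your specific choice $a_n=\E^{-n}$ gives commensurable periods $T_n=n$, so the pole sets of different pieces overlap and you are (rightly) forced into a dominance argument, something the paper's generic choice avoids.

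There is, however, a concrete error in that argument: from $\zeta_{\ty,\O_{\ty}^{(a,b)}}(s;|\cdot|_{\ty})=\tfrac{1}{s+b+1}\cdot\tfrac{1}{a^{-(s+b+1)}-2}$ one computes $\res(\zeta_{\ty,\O_n},\omega_{n,k})=\tfrac{1}{2(\log 2+2\pi k\I)}$, not $\tfrac{1}{2\log 2}$. Hence at $\omega_0=D+2\pi(p/q)\I$ in lowest terms the piece $n=qm$ contributes $\g_{qm}^{-\omega_0}\cdot\tfrac{1}{2(\log 2+2\pi pm\I)}$, and your stated dominance inequality is not the correct one. The fix is easy: for $p\neq 0$ the factor $|\log 2+2\pi pm\I|^{-1}$ is decreasing in $m$, so after cancelling it the needed inequality reduces to $\g_q^{|D|}>\sum_{m\ge 2}\g_{qm}^{|D|}$, which already holds for $\g_n=2^{-n}$ (since $|D|>2$) and a fortiori for your $\g_n=\E^{-n^2}$; for $p=0$ all contributions are positive and no cancellation occurs. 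Alternatively, you can sidestep the whole issue by choosing the $a_n$ so that the periods $\log(1/a_n)$ are rationally independent (e.g.\ $a_n=1/p_n$ with $p_n$ the $n$-th prime), making the pole sets of distinct pieces disjoint away from $s=D$; this is effectively what the paper relies on implicitly.
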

 
\begin{proof}
Let us fix $D<-2$ and choose a nonincreasing sequence $(a_n)_{n\geq 1}$ such that $0<a_n<1/2$ for every $n\in\eN$ and $a_n\to 0^+$ as $n\to +\infty$.
Furthermore, we define the sequence $b_n:=\log_{1/a_n}2-D-1$ and observe that for $D<-2$ the condition $b_n>1+\log_{1/a_n}2$ is fulfilled.
For the two parameter unbounded set $\O_\ty^{(a_n,b_n)}$ from Example \ref{ex:2param} we have that ${\dim}_B({\ty},\O_{\ty}^{(a_n,b_n)})=D$.
The next step is to scale every one of these sets with a suitable parameter, namely we define the sets $\widetilde{\O}_n$ for every $n\in\eN$ as follows:
$$
\widetilde{\O}_{n}:=\frac{1}{2^n}\O_{\ty}^{(a_n,b_n)}.
$$
Finally we construct the sets $\O_n$ by translating each set $\widetilde{\O}_n$ vertically for the amount $l_n$ which is equal to the sum of the heights of each $\widetilde{\O}_k$ for $k<n$, i.e., $l_1:=0$ and
$$
l_n:=\sum_{k=1}^{n-1}\frac{1}{2^k}\frac{a_k^{b_k}}{1-2a_k^{b_k}}
$$
for $n>1$ and define the set $\O$ to be the disjoint union of the sets $\O_n$.
Now we observe that the scaling factor in the definition of the sets $\widetilde{\O}$ ensures that the set $\O$ has finite Lebesgue measure and that it lies in a horizontal strip of finite width.

Similarly as before, this ensures us that calculating the tube formula of $\O$ using the $|\cdot|_{\ty}$-norm on $\eR^2$ will not affect the values of the upper and lower Minkowski contents of $\O$ at infinity.
For $t>1$ we have that
$$
\begin{aligned}
|K_t(0)^c\cap\O|&=\sum_{n=1}^\ty|K_t(0)^c\cap\O_n|=\sum_{n=1}^\ty|K_t(0)^c\cap 2^{-n}\O_{\ty}^{(a_n,b_n)}|\\
&=\sum_{n=1}^{\ty}2^{-2n}|K_{2^nt}(0)^c\cap\O_{\ty}^{(a_n,b_n)}|\\
&=\sum_{n=1}^{\ty}\frac{t^{2+D}}{2^{-Dn}}\left(G_n\left(\log(2^nt)\right)-\frac{t^{-\log_{1/a_n}2}}{2^{n\log_{1/a_n}2}(b_n-1)}\right)
\end{aligned}
$$
where we have used~\eqref{skaliranje} with $N=2$ and $G_n$ is the $\log(1/a_n)$-periodic function defined by~\eqref{G_tau} with $a$ and $b$ replaced by $a_n$ and $b_n$ respectively.
In other words, we have:
\begin{equation}\label{K_tcapO}
|K_t(0)^c\cap\O|=t^{2+D}\left(G\left(\log{t}\right)-\sum_{n=1}^{\ty}\frac{t^{-\log_{1/a_n}2}}{(b_n-1)2^{n(-D+\log_{1/a_n}2)}}\right),
\end{equation}
where
\begin{equation}
G(\tau):=\sum_{n=1}^{\ty}2^{nD}G_n\left(\tau+n\log{2}\right).
\end{equation}
The convergence of the sum for every $t>1$ in~\eqref{K_tcapO} follows from the facts that $\log_{1/a_n}2\in(0,1)$, $-D>2$ and $b_n-1>-D-2>0$ for all $n\in\eN$, i.e.,
$$
\sum_{n=1}^{\ty}\frac{t^{-\log_{1/a_n}2}}{(b_n-1)2^{n(-D+\log_{1/a_n}2)}}\leq -\frac{1}{D+2}\sum_{n=1}^{\ty}\frac{1}{(2^{-D+\log_{1/a_n}2})^n}<\ty.
$$
Furthermore, the series defining the function $G$ is also convergent for $\tau>0$.
To see this, we observe that from~\eqref{G_sadrzaj_ab_Omega} we have:
$$
G_n(\tau)\leq{\ovb{\M}}_{\ty}^{D}(\O_{\ty}^{(a_n,b_n)})=\frac{1}{b_n-1}\frac{a_n^{1-b_n}-1}{a_n^{1-b_n}-2}\leq 1
$$
for all $n\in\eN$.
The last inequality above can be easily shown from the conditions on $a_n$ and $b_n$.
Furthermore, from this we conclude that
$$
G(\tau)=\sum_{n=1}^{\ty}2^{nD}G_n\left(\tau+n\log{2}\right)\leq\sum_{n=1}^{\ty}\frac{1}{(2^{-D})^n}<\ty.
$$
In particular,
$$
{\ovb{\M}}^{D}(\ty,\O)\leq\sum_{n=1}^{\ty}\frac{1}{(2^{-D})^n}<\ty.
$$
On the other hand, for the lower Minkowski content of $\O$ at infinity we can use the fact that $\O\supseteq\O_1$ and therefore
$$
{\unb{\M}}^{D}(\ty,\O)\geq{\unb{\M}}^D(\ty,\O_1)={\unb{\M}}^D(\ty,2^{-1}\O_{\ty}^{(a_1,b_1)})=2^{D}{\unb{\M}}^D(\ty,\O_{\ty}^{(a_1,b_1)})>0.
$$
The last equality above is a consequence of Lemma~\ref{skaliranje_lema} with $r=D$, while the conclusion of positivity follows from~\eqref{D_sadrzaj_ab_Omega}.

Let us now show that for the distance zeta function of $\O$ at infinity the critical line $\{\re s=D\}$ is a natural boundary.
Using the scaling property of the distance zeta function at infinity from Proposition~\ref{scaling_prop} we have that
$$
\zeta_{\ty,\O}(s)=\sum_{n=1}^{\ty}\zeta_{\ty,2^{-n}\O_{\ty}^{(a_n,b_n)}}(s;1)=\sum_{n=1}^{\ty}2^{ns}\cdot\zeta_{\ty,\O_{\ty}^{(a_n,b_n)}}(s;2^n)
$$
and it is holomorphic on $\{\re s>D\}$.
Furthermore, according to \cite[Example 6]{ra2}, for every $n\in\eN$ the zeta function $\zeta_{\ty,\O_{\ty}^{(a_n,b_n)}}(s;2^n;|\cdot|_{\ty})$ is meromorphic on $\Ce$ and has simple poles at $D+\frac{2\pi\I}{\log(1/a_n)}\Ze$.
Since $\O_{\ty}^{(a_n,b_n)}$ is contained in a strip, i,e, a cylinder of finite width, according to Proposition~\ref{euc_ty}, we have that $\zeta_{\ty,\O}(s;2^n)$ is meromorphic at least on $\{\re s>D-2\}$ and its poles in that half-plain coincide with that of $\zeta_{\ty,\O_{\ty}^{(a_n,b_n)}}(s;2^n;|\cdot|_{\ty})$.
From this we conclude that the set of poles of $\zeta_{\ty,\O}(s)$ is dense in the critical line $\{\re s=D\}$ since $\log(1/a_n)\to +\ty$ as $n\to +\ty$.
This, in turn, implies that every point of the critical line is a nonremovable singularity of $\zeta_{\ty,\O}(s)$, i.e., $\O$ is maximally hyperfractal at infinity.
\end{proof}

Now we are ready to show that a careful choice of the parameters in the two parameter set from Example gives examples of algebraically and transcendentally quasiperiodic sets at infinity.
First we recall some needed definitions from number theory for the convenience of the reader.

The {\em field of algebraic numbers} (often denoted by $\ov\Qu$\label{ovQ} in the literature) 
can be viewed (up to isomorphism) as the algebraic closure of $\Qu$ (the field of rational numbers)
and is obtained by adjoining to $\Qu$ the roots of the polynomial equations with coefficients in $\Qu$ (or, equivalently, in $\Ze$). Note that, as a result, it is a countable set.
A finite set of real numbers is said to be {\em rationally $($resp., algebra\-ically$)$ linearly 
independent} or simply, {\em rationally $($resp., algebra\-ically$)$ independent}, if
it is linearly independent over the field of rational (resp., algebraic) real numbers.
Furthermore a sequence $(T_i)_{i\ge1}$ of real numbers 
is said to be {\em rationally $($resp., algebraically$)$ linearly 
independent}, if any of its finite subsets
is rationally (resp., algebraically) independent.

In order to define quasiperiodic sets at infinity we will use a definition of quasiperiodic functions adapted in \cite[Definitions 3.1.9.]{fzf} --- the case of finitely many periods and \cite[Definitions 4.6.6]{fzf} --- the case of countably many quasiperiods.\footnote{There exist 
different notions of quasiperiodic and almost periodic functions (and sets) in the literature
on dynamical systems, mathematical physics and harmonic analysis; e.g., \cite{sen},
 \cite{bohr}, \cite{kat}, \cite{lapidusfrank12}, 
\cite[Appendix F]{lapz}, along with the relevant references therein.
The definition we use --- \cite[Definition 3.1.9.]{fzf} is adapted from the one in \cite{enc}.}
Roughly speaking, if the quasiperiods are rationally independent we say that the function is algebraically quasiperiodic, and if the quasiperiods are algebraically independent we say that the function is transcedentally quasiperiodic, while the number of quasiperiods is referred to as the order of the quasiperiodic function, which is also allowed to be countably infinite.

%
%
%
%

In general the set $\mathcal{F}_{qp}$ of quasiperiodic functions is equal to the disjoint union of the set $\mathcal{F}_{tqp}$ of transcendentally quasiperiodic functions
and the set $\mathcal{F}_{aqp}$ of algebraically quasiperiodic functions:
$$
\mathcal{F}_{qp}=\mathcal{F}_{tqp}\cup \mathcal{F}_{aqp}.
$$

\begin{example}\label{ex:qua}
Let $\lambda_i,\nu_i\in\eR$ for $i=1,2$.
If $G(\tau)=\lambda_1G_1(\tau+\nu_1)+\lambda_2G_2(\tau+\nu_2)$, where the functions $G_i$ are nonconstant and $T_i$-periodic for $i=1,2$, such that $T_1/T_2$ is an irrational algebraic number, then $G$
is algebraically $2$-quasiperiodic.
If $T_1/T_2$ is transcendental, then $G$ is transcendentally $2$-quasiperiodic.
\end{example}

We now define quasiperiodic sets at infinity which complements the analogous definition of quasiperiodic bounded sets from~\cite{fzf}.

\begin{definition}[Quasiperiodic set at infinity]\label{quasiperiodic}
Assume $\O\subseteq\eR^N$ is of finite Lebesgue measure and such that it has the following tube formula at infinity:
\begin{equation}\label{Aasymp}
|{_t\O}|=t^{N+D}(G(\log t)+o(1))\q\textrm{as}\q t\to +\ty,
\end{equation}
such that $G$ is nonnegative, $0<\liminf_{\tau\to+\ty} G(\tau)\le\limsup_{\tau\to+\ty} G(\tau)<+\ty$ and $D\in(-\ty,-N]$ is a given constant.\footnote{Note that
it then follows that $\dim_B(\ty,\O)$
exists and is equal to $D$. Moreover, $\unb{\M}^D(\ty,\O)=\liminf_{\tau\to+\ty} G(\tau)$ and $\ovb{\M}^{D}(\ty,\O)=\limsup_{\tau\to+\ty} G(\tau)$.}

We say that $\O$ is an {\em $n$-quasiperiodic set}\label{quasiperiodicdef}
(of {\em order of quasiperiodicity equal to $n$}) {\em at infinity} if the corresponding function $G=G(\tau)$ is $n$-quasiperiodic.

In addition, the set $\O$ is said to be

\smallskip

($a$) {\em transcendentally $n$-quasiperiodic at infinity} if the corresponding function $G$ is transcendentally $n$-quasiperiodic;
\smallskip

($b$) {\em algebraically $n$-quasiperiodic at infinity} if the corresponding function $G$ is algebraically $n$-quasiperiodic.

Furthermore, we also say that $\O$ is {\em $\infty$-quasiperiodic set at infinity} when the function $G$ is $\infty$-quasiperiodic.
Moreover, we make the same distinction between the algebraical and the transcedental case for such sets $\O$.
\end{definition}

In light of Definition~\ref{quasiperiodic} and the comment just before Example \ref{ex:qua},
 one can see that each $n$-quasiperiodic set at infinity is either transcendentally $n$-quasiperiodic at infinity or algebraically $n$-quasiperiodic at infinity.
In other words, the family $\mathscr{D}_{qp}^{\ty}(n)$ of $n$-quasiperiodic sets at infinity is equal to the disjoint union of the family $\mathscr{D}_{tqp}^{\ty}(n)$ of transcendentally $n$-quasiperiodic sets at infinity
and the family $\mathscr{D}_{aqp}^{\ty}(n)$ of algebraically $n$-quasiperiodic sets at infinity:
$$
\mathscr{D}_{qp}^{\ty}(n)=\mathscr{D}_{tqp}^{\ty}(n)\cup \mathscr{D}_{aqp}^{\ty}(n).
$$
Note that the family $(\mathscr{D}_{qp}^{\ty}(n))_{n\ge2}$ is {\em disjoint}, as well as the family $(\mathscr{D}_{tqp}^{\ty}(n))_{n\ge2}$ and the family $(\mathscr{D}_{aqp}^{\ty}(n))_{n\ge2}$. Denoting 
\begin{equation}
\mathscr{D}_{qp}^{\ty}:=\bigcup_{n\ge2}\mathscr{D}_{qp}^{\ty}(n),\q
\mathscr{D}_{tqp}^{\ty}:=\bigcup_{n\ge2}\mathscr{D}_{tqp}^{\ty}(n),\q
\mathscr{D}_{aqp}^{\ty}:=\bigcup_{n\ge2}\mathscr{D}_{aqp}^{\ty}(n),
\end{equation}
we have
$$
\mathscr{D}_{qp}^{\ty}=\mathscr{D}_{tqp}^{\ty}\cup \mathscr{D}_{aqp}^{\ty}.
$$
Theorem~\ref{qp_construction}, or, more precisely, the construction in its proof will show that the families $\mathscr{D}_{tqp}^{\ty}(2)$ and $\mathscr{D}_{aqp}^{\ty}(2)$ are infinite.

\begin{theorem}\label{2-quasi}
The families $\mathscr{D}_{tqp}^{\ty}(2)$ and $\mathscr{D}_{aqp}^{\ty}(2)$ are infinite.
\end{theorem}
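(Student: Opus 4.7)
The plan is to adapt the construction of Theorem~\ref{qp_construction} to the much simpler case of only two building blocks, so that every admissible pair $(a_1,a_2)$ of parameters in $(0,1/2)$ produces one $2$-quasiperiodic set at infinity; the two asserted conclusions then follow by varying $(a_1,a_2)$ over suitable countable families of algebraic or transcendental pairs.

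First I would fix $D<-2$, set $b_n:=\log_{1/a_n}2-D-1$ for $n=1,2$ (so that $\dim_B(\ty,\O_{\ty}^{(a_n,b_n)})=D$ by Example~\ref{ex:2param}), form the scaled copies $\widetilde\O_n:=2^{-n}\O_{\ty}^{(a_n,b_n)}$, and translate them vertically so that the disjoint union $\O:=\widetilde\O_1\sqcup\widetilde\O_2$ lies in a horizontal strip of finite width and has finite Lebesgue measure. Combining the additivity of Lebesgue measure over the two disjoint components with the scaling identity of Lemma~\ref{skaliranje_lema} and the asymptotic expansion of Example~\ref{ex:2param} yields
\begin{equation*}
|K_t(0)^c\cap\O|=t^{2+D}\bigl[c_1G_1(\log t+\nu_1)+c_2G_2(\log t+\nu_2)+o(1)\bigr]\quad\text{as }t\to+\ty,
\end{equation*}
where each $G_n$ is the continuous, strictly positive, nonconstant $T_n$-periodic function from~\eqref{G_tau} with $T_n:=\log(1/a_n)$, and $c_n,\nu_n$ are explicit constants arising from the scaling. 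Setting $G(\tau):=c_1G_1(\tau+\nu_1)+c_2G_2(\tau+\nu_2)$, one has $0<\liminf_{\tau\to+\ty}G(\tau)\le\limsup_{\tau\to+\ty}G(\tau)<+\ty$ (the strict lower bound using the inclusion $\O\supseteq\widetilde\O_1$, exactly as in the final paragraph of the proof of Theorem~\ref{qp_construction}), so $\O$ satisfies Definition~\ref{quasiperiodic}. By Example~\ref{ex:qua}, $\O$ is algebraically (resp., transcendentally) $2$-quasiperiodic at infinity whenever the ratio $T_1/T_2=\log(1/a_1)/\log(1/a_2)$ is irrational algebraic (resp., transcendental).

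To generate infinitely many algebraically $2$-quasiperiodic sets at infinity, I would fix $a_2=1/4$ and, for each squarefree integer $p\geq 2$, take $a_1=4^{-\sqrt{p}}$; then $T_1/T_2=\sqrt{p}$ is irrational algebraic, and distinct $p$ give nonequivalent sets, as detected for instance by their distinct oscillatory periods (and thus their distinct sets of principal complex dimensions, via Theorem~\ref{Mink_nonmeasurable_inf}). For the transcendental case I would fix $a_2=1/4$ and let $a_1$ range over $\{1/3,1/5,1/6,1/7,\ldots\}$: the ratio $\log a_1/\log a_2$ is easily seen to be irrational (a nontrivial equality $a_1^q=a_2^p$ would force distinct primes to coincide in a factorization), so by the Gelfond--Schneider theorem it is automatically transcendental, producing infinitely many members of $\mathscr{D}_{tqp}^{\ty}(2)$. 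The only real obstacle is verifying the precise form of the displayed tube expansion, but this reduces to routine bookkeeping: the error term from each $\widetilde\O_n$ is of order $t^{2+D-\log_{1/a_n}2}$ with $\log_{1/a_n}2>0$, hence remains $o(1)$ after the factor $t^{-(2+D)}$ is extracted, and the change from the Euclidean norm to $|\cdot|_{\ty}$ is harmless since $\O$ lies in a horizontal strip of finite width (by the invariance noted after Theorem~\ref{analiticinf}).
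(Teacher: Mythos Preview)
Your approach is essentially the paper's: take two scaled-and-stacked building blocks $\O_{\ty}^{(a_n,b_n)}$, read off the periods $T_n=\log(1/a_n)$, and vary $(a_1,a_2)$ so that $T_1/T_2$ is irrational algebraic or transcendental. The paper uses $a_2=a_1^{\sqrt{m}}$ for the algebraic family and $(a_1,a_2)=(1/3,1/k)$ together with Gel'fond--Schneider for the transcendental one; your variants work just as well, though in your transcendental list you must exclude \emph{all} powers of $2$, not only $1/4$ (e.g.\ $a_1=1/8$, $a_2=1/4$ gives $T_1/T_2=3/2$).

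There is, however, one genuine gap. Definition~\ref{quasiperiodic} is phrased in terms of the Euclidean tube function $|_t\O|$, whereas your displayed expansion is for the cube tube $|K_t(0)^c\cap\O|$. Your appeal to ``the invariance noted after Theorem~\ref{analiticinf}'' does not close this: that invariance concerns only the Minkowski \emph{dimension} and \emph{nondegeneracy} under change of norm (or, in the Remark there, the holomorphy of the difference of distance zeta functions on a half-plane), not the pointwise $o(1)$ asymptotic required in~\eqref{Aasymp}. The paper addresses this explicitly: it invokes Lemma~\ref{tube_functions}, which gives $|_t\O|=|K_t(0)^c\cap\O|+O(t^{-1})$ for sets contained in a horizontal strip of finite width. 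After extracting the factor $t^{2+D}$, this residual error becomes $O(t^{-3-D})$, which is $o(1)$ only when $D>-3$. Accordingly the paper restricts to $D\in(-3,-2)$ to obtain ``proper'' $2$-quasiperiodic sets at infinity in the sense of Definition~\ref{quasiperiodic}; you must impose the same restriction (or replace the strip construction by the radial one the paper alludes to).
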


\begin{proof}
We note that in the construction of the set $\O$ in the proof of Theorem~\ref{qp_construction} if we only take two sets $\O_{\ty}^{(a_1,b_1)}$ and $\O_{\ty}^{(a_2,b_2)}$ instead of infinitely many, we can construct an algebraically or a transcendentally $2$-quasiperiodic unbounded set at infinity with prescribed box dimension at infinity equal to $D<-2$.
We point out here that the set $\O$ constructed from sets $\O_{\ty}^{(a_1,b_1)}$ and $\O_{\ty}^{(a_2,b_2)}$ has the following tube formula at infinity
\begin{equation}
|K_t\cap\O|=t^{2+D}(G(\log t)+O(t^{-\log_{1/a_1}2}))\quad\textrm{as}\quad t\to +\ty,
\end{equation}
where
\begin{equation}
G(\tau)=2^DG_1(\tau+\log 2)+2^{2D}G_2(\tau+2\log 2)
\end{equation}
is a $2$-quasiperiodic function with
\begin{equation}
G_i(\tau)=\frac{2^{-\left\{\frac{\tau}{\log(1/a_i)}\right\}}}{b_i-1}\left(1+\frac{(a_i^{1-b_i})^{\left\{\frac{\tau}{\log(1/a_i)}\right\}}}{a_i^{1-b_i}-2}\right)
\end{equation}
for $i=1,2$.
As we can see the set $\O$ is then $2$-quasiperiodic at infinity but in the sense of the `cube' tube function at infinity $t\mapsto|K_t(0)^c\cap\O|$.
To get a `proper' $2$-quasiperiodic set at infinity one should mimic this construction in a radial way, i.e., use an analog of sets $\O_{\ty}^{(a_i,b_i)}$ that are ``arranged'' around radial rays emanating from the origin.
We will not get into the details of this construction, but on the other hand, we can use Lemma~\ref{tube_functions} from the Appendix to deduce that if we choose $D\in(-3,-2)$ we do get ``proper'' $2$-quasiperiodic sets at infinity even in the present construction.
More precisely, since $\O$ is contained in a strip of finite width, by Lemma~\ref{tube_functions}, we have that
\begin{equation}\nonumber
\begin{aligned}
|{_t\O}|&=|K_t(0)^c\cap\O|+O(t^{-1})\\
&=t^{2+D}(G(\log t)+O(t^{-\log_{1/a_1}2})+O(t^{-2-D-1}))\\
&=t^{2+D}(G(\log t)+o(1))
\end{aligned}
\end{equation}
as $t\to +\ty$; that is, $\O$ is $2$-quasiperiodic at infinity.

Now, for the algebraical case it suffices to choose $a_1\in(0,1/2)$ and define, for instance, $a_2:=a_{1}^{\sqrt{m}}$ where $m\geq 2$ is an integer that is not a perfect square.
Then we have that $b_1=\log_{1/a_1}2-D-1$ and $b_2=\log_{1/a_2}2-D-1$. Furthermore for the periods we have that
$$
T_1=\log(1/a_1)\quad\textrm{ and }\quad T_2=\log(1/a_2)=\sqrt{m}\log(1/a_1),
$$
i.e., $T_2/T_1=\sqrt{m}$.

On the other hand, if we choose, for instance, $a_1=1/3$ and $a_2=1/k$ where $k>3$ is an integer that is not a power of $3$, we have that
$$
\frac{T_1}{T_2}=\frac{\log{3}}{\log{k}}=\log_k3
$$
which is a transcendental number, a fact that follows from the Gel'fond--Schneider Theorem \cite{gelfond}.
\end{proof}

\begin{remark}\label{c_dim_o}
As a consequence of~\eqref{twop_set}, we have that the complex dimensions at infinity of the set $\O$ from Theorem~\ref{2-quasi} visible through $W:=\{\re s>D-2\}$ are given by
\begin{equation}
\{D-\log_{1/a_i}2:i=1,2\}\cup\left(D+\mathbf{p}(a_1)\I\Ze\right)\cup\left(D+\mathbf{p}(a_2)\I\Ze\right)
\end{equation}
where $\mathbf{p}(a_i)=2\pi/\log(1/a_i)$ for $i=1,2$ are the {\em oscillatory quasiperiods} of $\O$. 
\end{remark}

\begin{theorem}[Construction of $n$-quasiperiodic sets at infinity]\label{n-quasi}
The families $\mathscr{D}_{tqp}^{\ty}(n)$ and $\mathscr{D}_{aqp}^{\ty}(n)$ are infinite for every integer $n\geq 2$.
\end{theorem}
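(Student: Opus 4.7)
The plan is to mimic the construction from Theorem~\ref{2-quasi} but with $n$ building blocks in place of two, and to choose the scaling parameters so that the resulting $n$ periods satisfy either rational or algebraic linear independence, as required. First I would fix $D<-2$, pick $a_1,\ldots,a_n\in(0,1/2)$ (to be specified), and set $b_i:=\log_{1/a_i}2-D-1$ so that each $\O_\ty^{(a_i,b_i)}$ has box dimension $D$ at infinity and satisfies the admissibility condition $b_i>1+\log_{1/a_i}2$. I would then define $\O$ as the disjoint union of vertically translated copies of $2^{-i}\O_\ty^{(a_i,b_i)}$ for $i=1,\ldots,n$, stacked inside a horizontal strip of finite width exactly as in the proofs of Theorems~\ref{qp_construction} and~\ref{2-quasi}. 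Repeating the tube-function computation verbatim (using the scaling Lemma~\ref{skaliranje_lema} and the appendix lemma to pass from the cube-based tube to the Euclidean one) yields
\[
|_t\O|=t^{2+D}\bigl(G(\log t)+o(1)\bigr)\quad\text{as }t\to+\ty,
\]
where $G(\tau)=\sum_{i=1}^n 2^{iD}G_i(\tau+i\log 2)$ and each $G_i$ is the nonconstant periodic function of minimal period $T_i:=\log(1/a_i)$ appearing in~\eqref{G_tau}.

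Next I would specify the $a_i$ so as to control the periods $(T_i)_{i=1}^n$. For the algebraic case, fix $a_1\in(0,1/2)$ and set $a_i:=a_1^{\sqrt{p_i}}$, where $p_1=1$ and $p_2,\ldots,p_n$ are distinct squarefree integers $\geq 2$; then $T_i=\sqrt{p_i}\,T_1$. The classical fact that $1,\sqrt{p_2},\ldots,\sqrt{p_n}$ are $\Qu$-linearly independent (a standard consequence of the multiquadratic extension $[\Qu(\sqrt{p_2},\ldots,\sqrt{p_n}):\Qu]=2^{n-1}$) then forces $(T_i)_{i=1}^n$ to be $\Qu$-linearly independent, while all of them lie on a single $\ov{\Qu}$-line through the origin, so they are $\ov{\Qu}$-linearly dependent; this produces a member of $\mathscr{D}_{aqp}^{\ty}(n)$. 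For the transcendental case, I would set $a_i:=1/q_i$ for distinct primes $q_1,\ldots,q_n\geq 3$, so that $T_i=\log q_i$. Unique prime factorization gives $\Qu$-linear independence of the $T_i$, and Baker's theorem on linear forms in logarithms of algebraic numbers upgrades this to $\ov{\Qu}$-linear independence; this produces a member of $\mathscr{D}_{tqp}^{\ty}(n)$. Letting $D$ range over $(-\ty,-2)$ (or merely varying $a_1$ in the algebraic case and the tuple of primes in the transcendental case) yields infinitely many such sets, since the Minkowski dimension at infinity already distinguishes them.

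The main obstacle is not the construction itself---which is essentially bookkeeping on top of the proof of Theorem~\ref{2-quasi}---but verifying that $G$ is genuinely $n$-quasiperiodic in the sense of Definition~\ref{quasiperiodic}, i.e., that the decomposition into $n$ nonconstant periodic summands with minimal periods $T_1,\ldots,T_n$ cannot be collapsed into fewer summands. This reduces exactly to the absence of a nontrivial $\Qu$-linear (respectively $\ov{\Qu}$-linear) relation among the $T_i$, which is precisely what the multiquadratic argument (respectively Baker's theorem) supplies. A minor technicality is the error term coming from the cube-versus-Euclidean comparison: for $D\in(-3,-2)$ it is absorbed into the $o(1)$ factor exactly as at the end of the proof of Theorem~\ref{2-quasi}, whereas for arbitrarily small $D$ one either sharpens the appendix lemma or, as the previous proof already suggests, repeats the construction in a radial variant where the strip is replaced by a sector, so that the cube and Euclidean tube functions agree up to a higher-order remainder.
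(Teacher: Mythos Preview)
Your proposal is correct and follows essentially the same route as the paper: stack $n$ scaled copies of the two-parameter sets, compute the tube function as in Theorem~\ref{qp_construction}, and then choose the $a_i$ via square roots of primes (invoking Besicovitch/multiquadratic independence) for the algebraic case and via reciprocals of primes (invoking unique factorization plus Baker's theorem) for the transcendental case. The paper likewise restricts to $D\in(-3,-2)$ to absorb the cube-versus-Euclidean error into the $o(1)$ term, exactly as you indicate.
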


\begin{proof}
The proof is analogous to the proof of Theorem~\ref{2-quasi} the difference being in the fact that we take $n$ sets $\O_{\ty}^{(a_i,b_i)}$, for $i=1,\ldots,n$ instead of only two.
In that way we construct a set $\O$ with $n$ quasiperiods at infinity which will be `proper' $n$-quasiperiodic if we additionally restrict ourselves to $D\in(-3,-2)$.
(See the discussion in the proof of Theorem~\ref{2-quasi}.)
For the algebraically $n$-quasiperiodic case we may choose $a_1\in(0,1/2)$ and define $a_{i+1}:=a_1^{\sqrt{p_{i}}}$ where $p_i$ is the $i$-th prime number for $i\geq 1$.
Then for the quasiperiods of $\O$ we have that
$$
T_1=\log(1/a_1)\quad\textrm{and}\quad T_{i+1}=\log(1/a_1^{\sqrt{p_{i}}})=T_1\sqrt{p_{i}}
$$
for $i\geq 1$.
It is obvious that the quasiperiods $T_1,\ldots,T_n$ are algebraically dependent.
On the other hand, they are rationally independent.
Namely suppose that there are $\lambda_1,\ldots,\lambda_n\in\Qu$ such that
$$
\lambda_1T_1+\lambda_2T_2+\ldots+\lambda_nT_n=0.
$$
This is equivalent to
$$
\lambda_1+\lambda_2\sqrt{2}+\cdots+\lambda_n\sqrt{p_{n-1}}=0
$$
which is possible only if $\lambda_1=\cdots=\lambda_n=0$ according to a result of Besicovitch \cite{Bes}.
This proves that the set $\O$ indeed is algebraically $n$-quasiperiodic at infinity.

Let us now construct a transcendentally $n$-quasiperiodic set at infinity.
We choose now $a_i:=1/p_{i+1}$ with $p_i$ being the $i$-th prime number for $i\geq 1$.
Note that now $T_i=\log{(1/a_i)}=\log p_{i+1}$ and these numbers are rationally independent.
Indeed, if we assume the contrary; that is, that there exist rational numbers $\lambda_1,\ldots,\lambda_n$ such that $\sum_{i=1}^{n}\lambda_i\log p_{i+1}=0$, then this implies that $\prod_{i=1}^{n}p_{i+1}^{\lambda_i}=1$ which is in contradiction with the Fundamental theorem of algebra.
Next, Baker's Theorem \cite[Theorem~2.1]{baker} (a nontrivial extension of the Gel'fond--Schneider Theorem) implies that the numbers $T_1,\ldots,T_n$ are also algebraically independent; that is, the set $\O$ is transcendentally $n$-quasiperiodic.
\end{proof}

\begin{remark}
Similarly as in Remark~\ref{c_dim_o}, the set $\O$ constructed in Theorem~\ref{n-quasi} will have the following set of complex dimensions visible through $W=\{\re s>D-2\}$:
\begin{equation}
\bigcup_{i=1}^{n}\left(\{D-\log_{1/a_i}2\}\cup\left(D+\mathbf{p}(a_i)\I\Ze\right)\right)
\end{equation}
where $\mathbf{p}(a_i)=2\pi/\log(1/a_i)$ for $i=1,\ldots,n$ are the oscillatory quasiperiods of $\O$ at infinity.
\end{remark}

\begin{remark}
It is clear that one can construct somewhat more general examples of $n$-quasiperiodic sets at infinity than the ones from the proof of Theorem~\ref{n-quasi} by choosing other admissible values for the parameters $a_i$. 
\end{remark}

Let us conclude this section by defining the notion of $\ty$-quasiperiodic sets at infinity and showing that the maximally hyperfractal set $\O$ at infinity from Theorem~\ref{qp_construction} gives an example of such a set.
Moreover, by carefully choosing the parameters $a_i$ we can construct an infinite number of algebraically and transcendentally $\ty$-quasiperiodic sets at infinity.

In much the same way as before, if we denote with $\mathscr{D}_{qp}^{\ty}(\ty)$ the family of all $\ty$-quasiperiodic sets at infinity, then it is clear that this family is a disjoint union of $\mathscr{D}_{aqp}^{\ty}(\ty)$ and $\mathscr{D}_{tqp}^{\ty}(\ty)$; that is, the algebraically $\ty$-quasiperiodic subfamily and the transcendentally $\ty$-quasiperiodic subfamily, respectively.

\begin{theorem}[Construction of $\infty$-quasiperiodic sets at infinity]\label{ty_quasi}
The families $\mathscr{D}_{aqp}^{\ty}(\ty)$ and $\mathscr{D}_{tqp}^{\ty}(\ty)$ are infinite.
\end{theorem}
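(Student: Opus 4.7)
The plan is to reuse the construction of Theorem~\ref{qp_construction} essentially verbatim, but to restrict the prescribed Minkowski dimension to the strip $D\in(-3,-2)$ so that the resulting tube formula has the shape required by Definition~\ref{quasiperiodic}, and then to make an arithmetic choice of the sequence $(a_n)_{n\geq 1}$ which encodes either rational or algebraic independence of infinitely many quasiperiods. Infinitude of each family will then follow by varying $a_1$ (or, equivalently, the starting index of the prime sequence) together with $D$.

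First, fix $D\in(-3,-2)$ and let $(a_n)$ be any admissible decreasing sequence in $(0,1/2)$ with $a_n\to 0^+$; set $b_n:=\log_{1/a_n}2-D-1$ and build $\O\subseteq\eR^2$ as in the proof of Theorem~\ref{qp_construction}. From that proof,
\begin{equation*}
|K_t(0)^c\cap\O|=t^{2+D}\Bigl(G(\log t)-S(t)\Bigr),\quad S(t):=\sum_{n=1}^{\ty}\frac{t^{-\log_{1/a_n}2}}{(b_n-1)\,2^{n(-D+\log_{1/a_n}2)}},
\end{equation*}
with $G(\tau):=\sum_{n\ge 1}2^{nD}G_n(\tau+n\log 2)$ and $G_n$ the $\log(1/a_n)$-periodic function from Example~\ref{ex:2param}. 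For $t\ge 1$ each summand in $S(t)$ is dominated by the corresponding term of the convergent series $\sum_n (b_n-1)^{-1}(2^{-D+\log_{1/a_n}2})^{-n}$, and each tends to $0$ as $t\to+\ty$, so $S(t)=o(1)$. Since $\O$ lies in a horizontal strip of finite width, Lemma~\ref{tube_functions} in the Appendix yields $|{_t\O}|=|K_t(0)^c\cap\O|+O(t^{-1})$; the bound $D>-3$ then gives $t^{-1}=o(t^{2+D})$, and therefore $|{_t\O}|=t^{2+D}(G(\log t)+o(1))$ as $t\to+\ty$, which is the shape required by Definition~\ref{quasiperiodic}. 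By construction $G$ is $\ty$-quasiperiodic with quasiperiods $T_n:=\log(1/a_n)$.

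It remains to make arithmetic choices that realize the two kinds of independence. For the algebraic case, fix $a_1\in(0,1/2)$ and set $a_n:=a_1^{\sqrt{p_{n-1}}}$ for $n\geq 2$, where $p_k$ denotes the $k$-th prime; then $T_n=\sqrt{p_{n-1}}\,T_1$, and any finite rational relation $\lambda_0T_1+\sum_{j}\lambda_j T_{i_j}=0$ collapses to $\lambda_0+\sum_j\lambda_j\sqrt{p_{i_j-1}}=0$, forcing all $\lambda_j=0$ by Besicovitch's theorem~\cite{Bes}. Hence $\O\in\mathscr{D}_{aqp}^{\ty}(\ty)$. For the transcendental case, take instead $a_n:=1/p_{n+1}$, so that $T_n=\log p_{n+1}$; an application of Baker's theorem \cite[Theorem~2.1]{baker}, exactly as in the proof of Theorem~\ref{n-quasi}, ensures that every finite subset of $\{T_n\}_{n\ge 1}$ is linearly independent over the field of algebraic numbers, so $\O\in\mathscr{D}_{tqp}^{\ty}(\ty)$. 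Varying $a_1\in(0,1/2)$ (respectively, the index shift in the prime sequence) and $D\in(-3,-2)$ produces infinitely many pairwise distinct examples, distinguishable via their oscillatory quasiperiod sets $\{\mathbf{p}(a_n)=2\pi/T_n\}$.

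The main obstacle I anticipate is Step~1: one must be slightly careful with the double passage to the limit in the infinite sum $S(t)$, since in contrast to the finite-sum cases of Theorems~\ref{2-quasi} and~\ref{n-quasi} one cannot absorb the error term into a single $O(t^{-\log_{1/a_1}2})$. The uniform dominating series above fixes this, but it uses in an essential way both $-D>2$ (for summability of $2^{nD}$) and the fact that $\log_{1/a_n}2\in(0,1)$ (so that $t^{-\log_{1/a_n}2}\le 1$ uniformly in $n$). Everything else is a direct transcription of the finite-quasiperiod arguments already present in the paper.
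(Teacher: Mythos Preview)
Your proposal is correct and follows essentially the same route as the paper: you reuse the construction of Theorem~\ref{qp_construction} with $D\in(-3,-2)$, invoke Lemma~\ref{tube_functions} to pass from the cube tube function to the Euclidean one, and then make the same arithmetic choices ($a_{n}=a_1^{\sqrt{p_{n-1}}}$ via Besicovitch for the algebraic case, $a_n=1/p_{n+1}$ via Baker for the transcendental case). The one place where you are actually more careful than the paper is the treatment of the infinite error sum $S(t)$: the paper simply refers back to the finite-sum discussion in Theorem~\ref{2-quasi}, whereas you spell out the uniform domination needed to conclude $S(t)=o(1)$, which is the right thing to do in the $\infty$-quasiperiodic situation.
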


\begin{proof}
For a fixed $D<-2$ a member of each subfamily is the maximal hyperfractal $\O$ at infinity constructed in Theorem~\ref{qp_construction} for a specifically chosen  sequence of parameters $a_i$.
More precisely, to get a `proper' $\ty$-quasiperiodic set at infinity, we have to choose $D\in(-3,-2)$.
(See the discussion in the proof of Theorem~\ref{2-quasi}.)
We proceed analogously as in the proof of Theorem~\ref{n-quasi}; that is, let $(p_i)_{i\geq 1}$ be the increasing sequence of all prime numbers.
For the algebraically $\ty$-quasiperiodic set at infinity we may choose $a_1\in(0,1/2)$ and define $a_{i+1}:=a_1^{\sqrt{p_i}}$ for $i\geq 1$.
Similarly as before, from \cite{Bes} we easily conclude that the sequence of quasiperiods $T_i=\log(1/a_i)$, $i\geq 1$ is rationally independent.

On the other hand, for the transcendentally $\ty$-quasiperiodic set at infinity we may choose $a_i:=1/p_{i+1}$ for $i\geq 1$ and, similarly as before, \cite[Theorem~2.1]{baker} assures that the the sequence of quasiperiods $T_i=\log(1/a_i)$, $i\geq 1$ is algebraically independent.
\end{proof}

\section{Appendix}

Here we state and prove a result that is needed when constructing quasiperiodic sets at infinity in Section~\ref{qp_sets}.
Recall that the volume of the $N$-dimensional unit ball  is given by
$
\omega_N:=\frac{\pi^{N/2}}{\Gamma\left(\frac{N}{2}+1\right)}
$
where $\Gamma$ denotes the Gamma function.

\begin{lemma}\label{tube_functions}
Let $\O\subseteq\eR^N$ with $|\O|<\ty$ be such that it is contained in a cylinder
\begin{equation}
x_2^2+x_3^2+\cdots+x_N^2\leq C
\end{equation}
for some constant $C>0$ where $x=(x_1,\ldots,x_N)$.
Then
\begin{equation}
|_t\O|=|K_t(0)^c\cap\O|+O(t^{-1})\quad\mathrm{as}\quad t\to +\ty,
\end{equation}
where $K_t(0)$ is the ball of radius $t$ centered at $0$ in the max norm, $|\cdot|_{\infty}$ on $\mathbb{R}^N$.
\end{lemma}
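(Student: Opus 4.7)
The plan is to show that the symmetric difference between $B_t(0)^c \cap \Omega$ and $K_t(0)^c \cap \Omega$ lives in a thin shell whose volume is of order $t^{-1}$. The first observation is that since $|x|_\infty \leq |x|$ for every $x \in \mathbb{R}^N$, we have the inclusion $B_t(0) \subseteq K_t(0)$, and consequently $K_t(0)^c \subseteq B_t(0)^c$. Therefore there is no cancellation, and
\begin{equation}
|_t\Omega| - |K_t(0)^c \cap \Omega| = \big|\big(B_t(0)^c \setminus K_t(0)^c\big) \cap \Omega\big| = \big|\{x\in\Omega : |x|>t,\ |x|_\infty \leq t\}\big|.
\end{equation}

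Next I would use the cylindrical confinement to pin down the region. Write $x=(x_1,x')$ with $x' := (x_2,\ldots,x_N)$ so that $|x'|^2 \leq C$ for every $x \in \Omega$. For $t > \sqrt C$ (which we may assume since we let $t \to +\infty$), any point $x \in \Omega$ with $|x|_\infty > \sqrt C$ must satisfy $|x|_\infty = |x_1|$. In particular, on the set of interest we have $|x_1| = |x|_\infty \leq t$, while $x_1^2 + |x'|^2 = |x|^2 > t^2$ forces $x_1^2 > t^2 - C$. Hence the set in question is contained in
\begin{equation}
S_t := \big\{x \in \mathbb{R}^N : \sqrt{t^2 - C} < |x_1| \leq t,\ |x'|^2 \leq C\big\}.
\end{equation}

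Now I would compute a crude volume bound on $S_t$ by Fubini: the range of $x_1$ has total length
\begin{equation}
2\big(t - \sqrt{t^2 - C}\,\big) = \frac{2C}{t + \sqrt{t^2 - C}} = O(t^{-1}) \quad \text{as } t \to +\infty,
\end{equation}
while for each admissible $x_1$ the cross-section $\{x' : |x'|^2 \leq C\}$ has $(N{-}1)$-dimensional volume at most $\omega_{N-1} C^{(N-1)/2}$. Multiplying these bounds yields $|S_t| = O(t^{-1})$, and therefore
\begin{equation}
0 \leq |_t\Omega| - |K_t(0)^c \cap \Omega| \leq |S_t| = O(t^{-1}),
\end{equation}
which is exactly the claim.

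There is no real obstacle here; the only point requiring a tiny bit of care is the passage from $|x|_\infty > \sqrt C$ to $|x|_\infty = |x_1|$, which uses the cylinder condition in an essential way. The $N=1$ case is vacuous (the difference is zero since $|x| = |x|_\infty$ on $\mathbb{R}$), and for $N \geq 2$ the argument above is complete.
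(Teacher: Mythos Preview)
Your proof is correct and follows the same approach as the paper: both bound the difference $|_t\Omega|-|K_t(0)^c\cap\Omega|$ by the volume of a thin cylindrical slab of height $t-\sqrt{t^2-C}$ and base radius $\sqrt{C}$, then use $t-\sqrt{t^2-C}=O(t^{-1})$. Your write-up is simply more detailed than the paper's terse version; the one cosmetic point is that the intermediate claim $|x|_\infty=|x_1|$ is not actually needed (the bound $|x_1|\leq |x|_\infty\leq t$ suffices), and if you do want it, you should take $t>\sqrt{2C}$ rather than $t>\sqrt{C}$.
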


\begin{proof}
We note that for $t$ sufficiently large the difference $|_t\O|-|K_t(0)^c\cap\O|$ is less than the volume of the $N$-dimensional cylinder of height $h:=t-\sqrt{t^2-C^2}$ with base of radius $C$.
In other words, we have that
$$
\big||_t\O|-|K_t(0)^c\cap\O|\big|\leq h\omega_{N-1}C^{N-1}=\frac{\omega_{N-1}C^{N+1}}{t+\sqrt{t^2+C^2}}=O(t^{-1}).
$$
\end{proof} 

%

We also state here a simple and useful corollary of the Complex mean value Theorem~{\cite[Theorem~2.2]{EvJa}} that is needed in the proof of Theorem \ref{equiv_mero}.

\begin{corollary}\label{comp_ocj}
Let $f$ be a holomorphic function defined on an open convex subset $U_f$ of $\Ce$.
Furthermore let $a$ and $b$ be two distinct points in $U_f$.
\begin{equation}
|f(b)-f(a)|\leq\sqrt{2}|b-a|\max_{s\in[a,b]}|f'(s)|.
\end{equation}
\end{corollary}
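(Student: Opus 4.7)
The plan is to invoke the Complex Mean Value Theorem of Evard and Jafari directly, as the corollary is almost an immediate repackaging of it. Recall that theorem states that if $f$ is holomorphic on an open convex set containing the distinct points $a,b$, then there exist points $u,v$ on the open segment $[a,b]$ such that
\begin{equation}
\re\!\left(\frac{f(b)-f(a)}{b-a}\right)=\re f'(u),\qquad \im\!\left(\frac{f(b)-f(a)}{b-a}\right)=\im f'(v).
\end{equation}
So the first step will just be to quote this and name such points $u,v\in[a,b]$.

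Next I will estimate the modulus of the difference quotient by splitting into real and imaginary parts. Writing
\begin{equation}
\left|\frac{f(b)-f(a)}{b-a}\right|^2=\bigl(\re f'(u)\bigr)^2+\bigl(\im f'(v)\bigr)^2,
\end{equation}
I will use the trivial bounds $(\re f'(u))^2\leq |f'(u)|^2$ and $(\im f'(v))^2\leq |f'(v)|^2$, then replace both by $\max_{s\in[a,b]}|f'(s)|^2$. This yields
\begin{equation}
\left|\frac{f(b)-f(a)}{b-a}\right|^2\leq 2\max_{s\in[a,b]}|f'(s)|^2,
\end{equation}
from which the desired inequality follows after taking square roots and clearing the denominator.

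There is essentially no obstacle here beyond citing the correct form of the complex mean value theorem; the factor $\sqrt{2}$ is precisely what one pays for decoupling the real and imaginary parts onto different points of the segment (which is necessary in the complex setting, since a single intermediate point as in the real-variable MVT does not exist in general for holomorphic $f$). The only minor bookkeeping point is to note that although $u$ and $v$ are produced in the open segment, taking the maximum over the closed segment $[a,b]$, which is a compact set on which $|f'|$ is continuous, is harmless and only weakens the bound.
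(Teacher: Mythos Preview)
Your proposal is correct and follows essentially the same approach as the paper: both invoke the Evard--Jafari Complex Mean Value Theorem to obtain two intermediate points on the segment, square the modulus of the difference quotient, bound the real and imaginary parts separately by $|f'|^2$ at those points, and then pass to the maximum over $[a,b]$ to pick up the factor $2$ before taking square roots.
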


\begin{proof}
From \cite[Theorem~2.2]{EvJa}, we have that there are $s_1,s_2\in(a,b)$ such that\footnote{Here, we also use the obvious inequalities $|\re s|,|\im s|\leq|s|$ for $s\in\Ce$.}
$$
\begin{aligned}
\left|\frac{f(b)-f(a)}{b-a}\right|^2&=|\re(f'(s_1))|^2+|\im(f'(s_2))|^2\\
&\leq|f'(s_1)|^2+|f'(s_2)|^2\\
&\leq 2\max_{s\in[a,b]}|f'(s)|^2.
\end{aligned}
$$
Taking the square root of both sides and multiplying by $|b-a|$ completes the proof of the corollary.
\end{proof}

\end{document}